\newcommand{\zz}{\mathbb{Z}}
\newtheorem{theorem}{Theorem}[section]
\newtheorem{corollary}[theorem]{Corollary}
\newtheorem{lemma}[theorem]{Lemma}
\newtheorem{question}[theorem]{Question}
\theoremstyle{definition}
\newtheorem{definition}[theorem]{Definition}
\newtheorem{example}[theorem]{Example}
\newtheorem{remark}[theorem]{Remark}
\def\N{\mathbb{N}}
\def\Z{\mathbb{Z}}
\def\1{\mathbf{1}}
\DeclareMathOperator{\dist}{dist}
\DeclareMathOperator{\Var}{Var_{n,\alpha}}
\DeclareMathOperator{\vol}{vol}
\def\EE{\mathbb{E}_{n,\alpha}}
\def\PP{\mathbb{P}_{n,\alpha}}
\newenvironment{ProofOfFactorThm}[1]
{\par\vskip2\parsep\noindent{\sc Proof of Theorem\ \ref{Thm:Factors}. }}{{\hfill
$\Box$}
\par\vskip2\parsep}
\newenvironment{ProofOfEmbeddingThm}[1]
{\par\vskip2\parsep\noindent{\sc Proof of Theorem\ \ref{Thm:Embeddings}. }}{{\hfill
$\Box$}
\par\vskip2\parsep}
\newenvironment{ProofOfSecondMomentMethodThm}[1]
{\par\vskip2\parsep\noindent{\sc Proof of Theorem\ \ref{Thm:SecondMomentMethod}. }}{{\hfill
$\Box$}
\par\vskip2\parsep}
\title[Factors and embeddings for random $\mathbb{Z}^d$ SFTs]{Factor maps and embeddings for random $\mathbb{Z}^d$ shifts of finite type}
\begin{document}

\author{Kevin McGoff and Ronnie Pavlov}
\address{Kevin McGoff\\
Department of Mathematics\\
University of North Carolina at Charlotte\\
Charlotte, NC 28223}
\email{kmcgoff1@uncc.edu}
\urladdr{https://clas-math.uncc.edu/kevin-mcgoff/}
\address{Ronnie Pavlov\\
Department of Mathematics\\
University of Denver\\
2280 S. Vine St.\\
Denver, CO 80208}
\email{rpavlov@du.edu}
\urladdr{www.math.du.edu/$\sim$rpavlov/}
\thanks{The first author acknowledges the support of NSF grant DMS-1613261. The second author acknowledges the support of NSF grant DMS-1500685.}

\keywords{$\mathbb{Z}^d$; shift of finite type; topological entropy; factor map; embedding}
\renewcommand{\subjclassname}{MSC 2010}
\subjclass[2010]{Primary: 37B50; Secondary: 37B10, 37A35}
%below are the definitions for some subject classification numbers
%22D40 Ergodic theory on groups 
%37A05 Measure-preserving transformations 
%37A15 General groups of measure preserving transformations 
%37A35 Entropy and other invariants, isomorphism, classification 
%37B10 symbolic dynamics
%37B40 topological entropy
%37B50 multi-dimensional shifts of finite type, tiling systems 
%37C40 smooth ergodic theory, invariant measures
%37C45 dimension theory of dynamical systems
%37C85 Dynamics of group actions other than Z and R, and foliations 
%37C99 smooth dynamical systems, general theory
%37D35 thermodynamic formalism, variational principles, equilibrium states

\begin{abstract}
For any $d \geq 1$, random $\mathbb{Z}^d$ shifts of finite type (SFTs) were defined in previous work of the authors. For a parameter $\alpha \in [0,1]$, an alphabet $\mathcal{A}$, and a scale $n \in \mathbb{N}$, one obtains a distribution of random $\mathbb{Z}^d$ SFTs by randomly and independently forbidding each pattern of shape $\{1,\dots,n\}^d$ with probability $1-\alpha$ from the full shift on $\mathcal{A}$. We prove two main results concerning random $\mathbb{Z}^d$ SFTs. First, we establish sufficient conditions on $\alpha$, $\mathcal{A}$, and a $\mathbb{Z}^d$ subshift $Y$ so that a random $\mathbb{Z}^d$ SFT factors onto $Y$ with probability tending to one as $n$ tends to infinity.  Second, we provide sufficient conditions on $\alpha$, $\mathcal{A}$ and a $\mathbb{Z}^d$ subshift $X$ so that $X$ embeds into a random $\mathbb{Z}^d$ SFT with probability tending to one as $n$ tends to infinity. 
\end{abstract}

\maketitle

\section{Introduction}

%ISSUE TO DISCUSS: FINITE EXTENSION PAPER ONLY REALLY STATED FOR $d > 1$, BUT THAT'S OUR REFERENCE. PROOF WORKS FOR $d = 1$ ANYWAY... GOOD ENOUGH?

In this work we study mappings between certain $\mathbb{Z}^d$ topological dynamical systems called $\mathbb{Z}^d$ subshifts. 
In such dynamical systems, the points are elements of $\mathcal{A}^{\mathbb{Z}^d}$ for some finite set $\mathcal{A}$ (called an alphabet), and the dynamics are given by the collection of translations $\{\sigma_v\}$ for $v \in \mathbb{Z}^d$. 
A $\mathbb{Z}^d$ \textbf{subshift} is simply any subset of $\mathcal{A}^{\mathbb{Z}^d}$ that is closed (in the product topology) and invariant under all translations $\sigma_v$. 
A specific and well-studied class of subshifts are the so-called \textbf{shifts of finite type} or SFTs. 
A $\mathbb{Z}^d$ SFT is defined via a finite set of finite forbidden patterns $\mathcal{F}$; the SFT $X(\mathcal{F})$ induced by $\mathcal{F}$ is just the set of all $x \in \mathcal{A}^{\mathbb{Z}^d}$ that do not contain (translates of) any of the patterns from $\mathcal{F}$. (See Section~\ref{defs} for formal definitions.) In many ways, $\mathbb{Z}$ SFTs are fairly well-behaved objects. In contrast, $\mathbb{Z}^d$ SFTs exhibit a variety of pathological behaviors when $d > 1$, as evidenced by the significant literature (\cite{Hochman}, \cite{HM}, \cite{JV1}, \cite{JV2}) showing that for some properties, any behavior that is algorithmically computable can be realized within a $\mathbb{Z}^d$ SFT.

In \cite{McGoff}, the first author defined a probabilistic framework for $\mathbb{Z}$ SFTs, which was later extended to $\mathbb{Z}^d$ SFTs by both authors in \cite{McGoffPavlov}. As is often the case for ``measuring'' subsets of a countable set (e.g. various notions of density on the integers), the idea used was to consider limiting behavior over a class of exhaustive finite sets. Informally (see Section~\ref{defs} for full definitions), one begins with an alphabet $\mathcal{A}$ and a parameter $\alpha \in [0,1]$. Then, for each $n$, a $\mathbb{Z}^d$ SFT may be randomly selected by independently forbidding each pattern with shape $\{1,\dots,n\}^d$ with probability $1-\alpha$ and allowing it with probability $\alpha$. To understand how ``typical" $\mathbb{Z}^d$ SFTs behave in this sense, one then studies the limiting behavior of various events and random variables as $n \rightarrow \infty$. 

Within this framework, it was shown that even though some $\mathbb{Z}^d$ SFTs exhibit extremely complex or pathological behavior, a 
``typical'' $\mathbb{Z}^d$ SFT is much more tractable; see Subsection~\ref{randomsubsec} for details. 

Remark 5.2 in \cite{McGoffPavlov} asks whether this probabilistic approach could be used to extend some structural results about $\mathbb{Z}$ SFTs to ``typical'' $\mathbb{Z}^d$ SFTs, given that general extensions to all $\mathbb{Z}^d$ SFTs are known to be impossible. In this work, we focus specifically on extending some of the earliest work on $\mathbb{Z}$ SFTs, namely the existence of injective/surjective shift-commuting continuous maps (called embeddings/factors) between two of them. Our results (formally stated in Section \ref{Sect:MainResults}) provide further evidence that despite the existence of $\mathbb{Z}^d$ SFTs with pathological properties when $d>1$, ``typical" $\mathbb{Z}^d$ SFTs are generally well-behaved. Before stating these results, we present some additional context.

\subsection{Existing results on factors and embeddings}

Consider two $\mathbb{Z}^d$ subshifts $X$ and $Y$. In general, one would like to know when there exists an embedding from $X$ into $Y$ or a factor map from $X$ onto $Y$. 
For both embeddings and factor maps, there are two simple necessary conditions, given in terms of periodic points and topological entropy, as follows. 

For every $x \in \mathcal{A}^{\mathbb{Z}^d}$, we define its \textbf{period set} to be the set of $p \in \mathbb{Z}^d$ for which $\sigma_p x = x$, and then $x$ is said to be periodic if its period set is not $\{0\}$. By definition, if $\phi$ is a shift-commuting map on a subshift $X$, then $\phi(\sigma_p(x)) = \sigma_p(\phi(x))$ for all $x$ in $X$. Therefore, the image of a point with period set $S$ must have period set containing $S$, and if $\phi$ is injective, then the image must have period set exactly equal to $S$. These observations yield some immediate necessary conditions. In particular, for a factor map to exist from $X$ onto $Y$, it must be the case that for every $S \subset \mathbb{Z}^d$, if $X$ contains a point with period set $S$, then $Y$ contains a point with period set containing $S$. Also, for an embedding to exist from $X$ into $Y$, it must be the case that for every possible period set $S$, there are at least as many points in $Y$ with period set $S$ as there are in $X$. We note for future reference that whenever a point $x$ has finite orbit, its period set is necessarily a $d$-dimensional sublattice of $\mathbb{Z}^d$. In particular, when $d = 1$, any nonempty period set necessarily has the form $p\mathbb{Z}$ for some $p \in \N$; this $p$ is often called the \textbf{least period} of $x$ in the literature.

Topological entropy is an element of $\mathbb{R} \sqcup \{\infty\}$ associated to any topological dynamical system; see Section~\ref{defs} for a definition in the case of subshifts. We denote the topological entropy of a subshift $X$ by $h(X)$. It's well-known that if $X$ can be embedded into $Y$, then $h(X) \leq h(Y)$, and if $X$ factors onto $Y$, then $h(X) \geq h(Y)$, so in each case the associated inequality is a necessary condition for the existence of an embedding/factor map. 

A surprising fact is that when $d = 1$, these necessary conditions on periodic points and entropy are quite often nearly sufficient, as seen in the following classical results.

\begin{theorem}{\rm (\cite{krieger})}
Suppose $X$ is a $\mathbb{Z}$ subshift, $Y$ is a mixing $\mathbb{Z}$ SFT, $h(X) < h(Y)$, and for every possible period set $S$, there are at least as many points in $Y$ with period set $S$ as there are in $X$. Then $X$ embeds into $Y$.
\end{theorem}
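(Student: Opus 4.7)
The plan is to follow the classical marker-based strategy. The mixing hypothesis on $Y$ will be used to construct a ``marker word'' $w$ in the language of $Y$ with the property that $w$ does not overlap itself nontrivially, and such that for all sufficiently large $N$, any block of length $N$ in the language of $Y$ can be bracketed between two copies of $w$ (using the mixing gap to fill in transitions on either side). The entropy gap $h(X) < h(Y)$ will supply the combinatorial room needed to inject sufficiently long $X$-blocks into $Y$-blocks of comparable length, since the number of $X$-blocks of length $L$ grows like $2^{Lh(X)}$ while the number of admissible $Y$-blocks of length $L$ bracketed by $w$ grows like $2^{Lh(Y)}$.

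First I would decompose $X$ into a ``short-period'' part and a ``long-period'' part. Fix a threshold $P$. Let $X_P \subseteq X$ denote the (finite) set of points whose least period is at most $P$. For each such period $p$, the hypothesis on period sets provides at least as many points of least period $p$ in $Y$ as in $X$, so one can choose an injective, shift-equivariant assignment from $X_P$ to periodic points of $Y$ with matching least periods. This defines a continuous, shift-commuting embedding $\phi_0 \colon X_P \to Y$.

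Next, for points $x \in X \setminus X_P$, I would use marker coding. From $x$ itself I would identify a canonical sequence of marker positions in $\mathbb{Z}$ --- for instance, the leftmost occurrences of some suitably chosen $X$-block in each window of length $L$ --- arranged so that consecutive gaps lie in a controlled range $[L, 2L]$ for $L \gg P$. On each such gap I encode the corresponding finite $X$-block as a $Y$-block, bracketed by copies of $w$ and padded using mixing. The entropy inequality ensures that a sufficiently efficient injective code of $X$-blocks of length $\approx L$ into appropriate $Y$-blocks exists. Injectivity of the resulting $\phi_1$ follows because the self-nonoverlapping marker $w$ is detectable in any image point, so both the marker positions and the encoded content of $x$ can be reconstructed.

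The main obstacle, and the part where the hypotheses are used most delicately, is gluing $\phi_0$ and $\phi_1$ into a single continuous, shift-commuting, genuinely injective map on all of $X$. Points of $X \setminus X_P$ whose orbits contain very long stretches closely resembling low-period periodic points have few or unstable markers, so the image of $\phi_1$ on such points must transition continuously to the image of $\phi_0$ on nearby periodic points; one typically achieves this by using a refined hierarchy of markers at scales much larger than $P$, together with a buffer convention that reserves a distinguishable ``near-periodic'' coding channel. Checking that injectivity survives this gluing requires using the period-set counting hypothesis not just for small $p$ but across all periods that can arise in this transition region, which is where the full strength of the assumption (equality of period counts for every $S$, not merely an asymptotic entropy bound) becomes essential.
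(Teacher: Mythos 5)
The paper does not prove this statement at all; it is quoted as Krieger's classical embedding theorem and cited to \cite{krieger}, so your proposal has to stand or fall against the known proof, not against anything in this paper. Measured that way, there is a genuine gap, and it sits exactly where you place your ``main obstacle.'' Your plan partitions the \emph{points} of $X$ into a finite low-period set $X_P$ and a remainder $X \setminus X_P$, and for the remainder you assert a shift-equivariant marker placement with consecutive gaps in $[L,2L]$. No such placement exists: a point of $X \setminus X_P$ may contain arbitrarily long central stretches that are periodic with some small period $p \leq P$ (while failing to be globally periodic), and on such a stretch any continuous, shift-commuting rule either produces markers with gaps forced down to about $p$ or produces no markers at all over a stretch of unbounded length. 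This is precisely the content of the marker lemma, which only guarantees markers with bounded gaps \emph{away from} windows of small period. Consequently the relevant dichotomy is not between periodic and non-periodic points but between regions within a single point (marker-coded regions versus long low-period stretches), and the coding of a non-periodic point must itself switch between block coding and periodic coding inside that point. Handling those transition regions injectively --- which requires mapping $p$-periodic $X$-blocks to $p$-periodic $Y$-blocks compatibly with the periodic-point count hypothesis, and reserving enough extra $Y$-words (via the strict inequality $h(X) < h(Y)$) to mark where the periodic regime begins and ends --- is the heart of Krieger's argument, and your sketch replaces it with an appeal to ``a refined hierarchy of markers'' and a ``buffer convention'' without constructing either or verifying injectivity and continuity there.

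Two smaller corrections: the hypothesis is an inequality (at least as many points of each period set in $Y$ as in $X$), not an equality as your last sentence suggests; and even the easy step of defining $\phi_0$ on $X_P$ needs the injection to be chosen orbit-by-orbit so that it commutes with the shift, which your phrasing assumes but does not arrange. Neither of these is fatal, but the missing treatment of near-periodic stretches is: as written, $\phi_1$ is not even well defined on all of $X \setminus X_P$, so the gluing problem you flag cannot yet be posed, let alone solved.
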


\begin{theorem}{\rm (\cite{Boyle1983})}
Suppose $X$ and $Y$ are mixing $\mathbb{Z}$ SFTs, $h(X) > h(Y)$, and whenever $X$ contains a point with period set $S$, $Y$ contains a point with period set containing $S$. Then $X$ factors onto $Y$.
\end{theorem}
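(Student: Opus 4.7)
The plan is to build the factor map $\phi \colon X \to Y$ as a sliding block code (automatically continuous and shift-commuting) by combining a Krieger-style marker decomposition of $X$, an entropy-counting argument to obtain surjectivity on ``generic'' pieces, and a special dictionary to handle the finitely many short-period orbits.

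First I would replace $X$ and $Y$ by higher block presentations so that both are $1$-step SFTs; this preserves entropy, the periodic point structure, and the existence of factor maps. Since $X$ is mixing, a marker lemma provides a finite collection of marker words of fixed length $N$ so that every $x \in X$ admits a canonical decomposition into gaps of controlled length $\approx L$ separated by marker occurrences. Because $h(X) > h(Y)$, for $L$ sufficiently large the number of legal $X$-gaps strictly exceeds the number of legal $Y$-gaps, so a surjective map from $X$-gaps to $Y$-gaps can be defined combinatorially; using the mixing of $Y$, the image gaps are glued into a bi-infinite point of $Y$ by fixed-length bridge words between consecutive gap images. This produces a sliding block code that is surjective onto all points of $Y$ lying outside a prescribed finite list of short-period orbits.

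To cover those remaining orbits one uses the periodic point hypothesis. Only finitely many orbits of $X$ have period below any threshold $P$; for each such orbit $\mathcal{O}$ with period set $p\mathbb{Z}$, the hypothesis yields a target $y_\mathcal{O} \in Y$ whose period divides $p$. I would modify the sliding code so that when it detects a sufficiently long stretch of $x$ matching some short-period orbit $\mathcal{O}$ (a sliding block condition, since distinct short-period orbits differ on bounded windows), it emits the corresponding shift of $y_\mathcal{O}$, and otherwise falls back to the generic marker-based encoding. The main obstacle is reconciling these two regimes within a single sliding block code of finite window size: one must choose $P$, $L$, and $N$ in the right order (roughly $P$ first, then $L \gg P$, then $N$) and use mixing of $Y$ to install uniform-length bridge words at every transition, including the transitions between periodic and generic stretches, so that the final map is well-defined, shift-commuting, and surjective onto all of $Y$.
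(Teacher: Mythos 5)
Note first that the paper contains no proof of this statement; it is quoted as background and attributed to Boyle (1983), so your sketch has to be measured against the standard proof rather than anything in the paper.

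The genuine gap is surjectivity. Your generic regime outputs points of $Y$ that are concatenations of chosen $Y$-blocks separated by construction-determined bridge words, at positions dictated by the marker locations in $x$; an arbitrary $y \in Y$ --- say one with arbitrarily long nearly periodic stretches, or one whose block structure does not line up with any admissible marker pattern and whose segments disagree with your fixed bridges --- has no visible preimage of this form. So the claim that the generic code is ``surjective onto all points of $Y$ lying outside a prescribed finite list of short-period orbits'' is unjustified, and for this kind of construction it is false as stated; counting $X$-gaps against $Y$-gaps only gives you a surjection at the level of blocks, not at the level of points. There are two standard ways to close this hole, and the proposal uses neither. One is to exploit compactness: $\phi(X)$ is a subshift of $Y$, so it suffices to show that every word of $\mathcal{L}(Y)$ occurs in some image point (density plus closedness then force $\phi(X)=Y$); with the gap-coding arranged to hit all $Y$-words of the relevant lengths with a fixed prefix and suffix, and mixing of $Y$ used to extend an arbitrary word to such a word, this gives surjectivity without ever matching a given $y$ coordinate-by-coordinate. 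The other is Boyle's actual route: embed into $X$, via Krieger's embedding theorem, a mixing SFT $Z$ with $h(Y)<h(Z)<h(X)$, no low-period points, and an explicit factor map onto $Y$ (for instance a product of $Y$ with a small-entropy mixing SFT having no short periods), so that surjectivity already holds on a subsystem of $X$, and then extend that map to all of $X$ by Boyle's extension lemma, which is precisely where the periodic-point hypothesis and the periodic/generic interface enter. That interface is the second weak point of your sketch: you name the reconciliation of the two regimes as ``the main obstacle'' but do not resolve it, and points of $X$ that are nearly periodic on long but finite stretches (where no markers can be placed and the code must interpolate, using mixing of $Y$ and the divisibility of periods, between the periodic image and the generic coding) are exactly the technical heart of the theorem. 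As written, the proposal defers the two steps that constitute its actual content.
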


For the existence of factor maps, if $Y$ is a full shift, then it clearly contains points with every possible period set, and the entropy inequality and mixing assumption on $X$ can also be relaxed.

\begin{theorem}{\rm (\cite{Boyle1983}, \cite{Marcus1979})}
Suppose $X$ is a $\mathbb{Z}$ SFT and $Y$ is a full shift with $h(X) \geq h(Y)$. Then there exists a factor map from $X$ onto $Y$.
\end{theorem}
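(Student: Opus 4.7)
The plan is to reduce to the case that $X$ is a mixing $\mathbb{Z}$ SFT and then apply the method of state splitting (following Marcus) to construct the factor map onto $Y$.

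First I would reduce to the mixing case. Recoding $X$ via a higher block presentation, we may regard $X$ as the edge shift on a finite directed graph $G$. By Perron--Frobenius, at least one irreducible component $G_0$ of $G$ has spectral radius equal to that of $G$; passing to a further power if necessary, we may assume $G_0$ is aperiodic, so the edge shift $X_0 := X_{G_0}$ is mixing and satisfies $h(X_0) = h(X) \geq h(Y) = \log k$, where $k := |\mathcal{B}|$. A standard block-coding argument then produces a factor map $X \to X_0$: the transient parts of $G$ and any lower-entropy irreducible components can be collapsed into $X_0$ by exploiting its mixing property. It therefore suffices to construct a factor map $X_0 \to Y$, so I may assume $X = X_0$.

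Now apply Perron--Frobenius to obtain a strictly positive right eigenvector $v$ of the adjacency matrix $A$ of $G$ with eigenvalue $\lambda \geq k$. The identity $\sum_j A_{ij} v_j = \lambda v_i \geq k v_i$ at each vertex $i$ says that the outgoing edges at $i$ carry enough total $v$-weight to be partitioned into $k$ disjoint groups satisfying the balance conditions required by a state splitting. Performing a finite sequence of such out-splittings (each of which is a conjugacy of edge shifts) produces a new graph $G'$, still presenting the same shift $X$ up to conjugacy, in which every vertex has out-degree exactly $k$. Fixing at each vertex of $G'$ an arbitrary bijection between its $k$ outgoing edges and $\mathcal{B}$ yields a one-block code $\pi : X \to Y$ that is immediately continuous, shift-commuting, and surjective.

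The main obstacle is the state-splitting step itself, particularly in the equal-entropy case $h(X) = \log k$ where no slack is available in $\lambda \geq k$ and the partitions of outgoing edges must meet tight arithmetic constraints. The classical resolution is to work with a positive integer vector $w$ satisfying $Aw \geq kw$ coordinate-wise (obtained from $v$ by a careful approximation argument using irreducibility of $A$) and perform the out-splittings relative to $w$; verifying that this procedure terminates with all out-degrees exactly equal to $k$ is the technical heart of the Marcus factor theorem, and is where one relies most heavily on the structure of irreducible nonnegative integer matrices.
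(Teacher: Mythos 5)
This theorem is quoted in the paper as classical background (it cites \cite{Boyle1983} and \cite{Marcus1979}) and is not proved there, so I am comparing your sketch against the standard argument. Your overall strategy (state splitting relative to an approximate eigenvector, in the spirit of Marcus) is the right one, but your reduction to the mixing case contains a genuine error. You claim that ``a standard block-coding argument produces a factor map $X \to X_0$'' onto the maximal-entropy irreducible component $X_0$, collapsing transient parts and lower-entropy components by mixing of $X_0$. No such factor map exists in general: a shift-commuting map must send a point of period $p$ to a point whose period divides $p$, so if some low-entropy component of $X$ contains, say, a fixed point while $X_0$ has none, there is no shift-commuting map from $X$ to $X_0$ at all, and mixing of $X_0$ does not help. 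Likewise ``passing to a further power'' to make $G_0$ aperiodic is not legitimate, since the factor map is required for the original $\mathbb{Z}$-action (and aperiodicity is not actually needed; irreducibility suffices for Marcus's theorem). The correct reduction goes in the opposite direction and uses the hypothesis that $Y$ is a full shift: build a factor map $\phi_0: X_0 \to Y$ on the irreducible component of maximal entropy, realize it by a local rule $\Phi_0$ on $(2N+1)$-blocks of $X_0$, and extend $\Phi_0$ arbitrarily to all $(2N+1)$-blocks over $\mathcal{A}_X$. Because $Y$ has no constraints, the resulting sliding block code is defined on all of $X$, is shift-commuting and continuous, and its image contains $\phi_0(X_0)=Y$, so it is onto. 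This is exactly why the theorem needs no mixing or irreducibility hypothesis on $X$ when the target is a full shift.

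A second, smaller gap: your claim that out-splittings yield a graph $G'$ conjugate to $X$ in which \emph{every} vertex has out-degree exactly $k$ cannot be correct when $h(X) > \log k$, since such a graph has entropy exactly $\log k$. With an integral approximate eigenvector $Aw \geq kw$ the splittings give out-degree at least $k$ at each vertex; one then either labels a chosen set of $k$ outgoing edges at each vertex bijectively with the target alphabet (labeling the surplus edges arbitrarily), checking surjectivity by following an arbitrary target sequence and taking a limit, or one passes to the subgraph with out-degree exactly $k$ and again invokes the extension trick above to get a map defined on all of $X$. The equal-entropy case $h(X) = \log k$ is the one where a genuine integer eigenvector $Aw = kw$ exists and out-degree exactly $k$ can be achieved; you correctly identify the termination of the splitting procedure as the technical heart of Marcus's theorem, and citing it is reasonable, but as written your reduction step and the ``out-degree exactly $k$'' claim would not survive scrutiny without the corrections above.
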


The picture is, however, quite different for $d > 1$. For instance, the following results were established in \cite{BPS}. 

\begin{theorem}{\rm (\cite{BPS})}\label{BPSthm1}
For every $d > 1$, there exist $\mathbb{Z}^d$ SFTs with arbitrarily large topological entropy that do not factor onto any nontrivial full shift. 
\end{theorem}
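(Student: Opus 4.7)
My plan is to construct, for each $k \geq 1$ and each $d \geq 2$, a $\mathbb{Z}^d$ SFT $X_k$ with topological entropy $h(X_k) \geq k$ that admits no factor map onto any nontrivial full shift. The construction layers an entropy-producing ``decoration'' over an aperiodic skeleton. Since $d \geq 2$, there exists an aperiodic $\mathbb{Z}^d$ SFT $S$ over some alphabet $\mathcal{B}$ (for example, via Berger-Robinson Wang tilings for $d = 2$, with products handling $d > 2$). The SFT $S$ has zero entropy, no periodic points, and its points carry a rigid hierarchical (multi-scale) supertile structure. I would then build $X_k$ over alphabet $\mathcal{B} \times \Sigma_k$, for $|\Sigma_k|$ chosen large, so that the first-coordinate projection lands in $S$ and the second coordinate is locally constrained by the first.

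The coupling will be arranged so that, on a subset of $\mathbb{Z}^d$ of positive density determined locally by the $S$-layer, the second coordinate is free to take any value in $\Sigma_k$, while elsewhere it is forced to a default symbol. A direct count of allowed patterns then yields $h(X_k) \geq \rho \log |\Sigma_k|$ for a fixed $\rho > 0$, and choosing $|\Sigma_k|$ sufficiently large gives $h(X_k) \geq k$. Because the skeleton layer has zero entropy, essentially all entropy of $X_k$ comes from the free positions of the decoration.

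For the main claim, I would argue there is no factor map $\phi: X_k \to F$ onto a nontrivial full shift $F$ by exhibiting an invariant preserved under factoring that $X_k$ has but no nontrivial full shift has. The candidate invariant exploits the hierarchical structure of $S$: every $x \in X_k$ projects to some $s \in S$, and $s$ carries multi-scale rigid patterns that appear with precisely controlled frequencies. This translates into a constraint of the form ``some specific finite pattern $P$ appears in every point with density at most $\rho_0 < $ uniform''. Such a density constraint is preserved under any sliding block code: if $\phi$ has radius $r$, then any finite image pattern $\phi(P')$ occurs in $\phi(x)$ only when $P'$ occurs in $x$, so density constraints on $x$ push forward to constraints on $\phi(x)$. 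On the other hand, a nontrivial full shift $F$ contains, by the pointwise ergodic theorem applied to its Bernoulli measure of maximal entropy, points realizing the uniform density of every fixed pattern; this contradicts the pushed-forward constraint, so $\phi(X_k)$ must be a proper subshift of $F$, contradicting surjectivity.

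The hard part will be Step 3: verifying that the chosen density invariant genuinely survives \emph{every} sliding block code and genuinely distinguishes $X_k$ from any full shift on $\geq 2$ symbols. A clever factor map could in principle redistribute information so as to ``average out'' the rigidity inherited from the skeleton; ruling this out requires choosing the decoration carefully enough that the preserved pattern-density constraint holds for \emph{any} finite-alphabet image and any block size. I expect this to rest on the specific combinatorial properties of the chosen aperiodic SFT, in particular its hierarchical self-similarity, which guarantees that certain frequencies are enforced on \emph{every} configuration and at \emph{every} scale, not merely on a typical one. Tuning the skeleton-decoration coupling so that this holds in a sliding-block-code-invariant way is the crux of the argument.
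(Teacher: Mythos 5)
This theorem is not proved in the paper at all: it is quoted verbatim from \cite{BPS}, so the only fair comparison is with the construction there, which is genuinely different from yours (the BPS examples are engineered around rigid structures whose characteristic scales/widths are unbounded within the system, precisely so that no sliding block code can export uniform randomness; they are not decorated aperiodic tilings). Judged on its own terms, your proposal is a plan with the decisive step missing, and the step as sketched would fail. The pushforward claim in Step 3 is not correct: if $\phi$ is a block code of radius $r$ and $Q$ is a pattern over the image alphabet, then $Q$ occurs in $\phi(x)$ at a site exactly when the $r$-enlarged window of $x$ at that site carries \emph{any} member of the (generally huge) family $\phi^{-1}(Q)$ of source patterns. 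Hence an upper bound $\rho_0$ on the density of one distinguished pattern $P$ in every point of $X_k$ imposes no upper bound on the density of any image pattern, and no contradiction with surjectivity follows. To contradict surjectivity you must exhibit a concrete point of the full shift (e.g.\ a generic point for the uniform Bernoulli measure) that provably has no $\phi$-preimage for \emph{every} radius and \emph{every} code, and nothing in the sketch does this; your own final paragraph concedes that this is unresolved, which means the theorem is not proved.

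There is also a structural problem with the construction itself. A Robinson-type skeleton decorated by symbols that are completely free on a positive-density, locally recognizable set of cells is exactly the kind of system that tends to \emph{admit} factor maps onto nontrivial full shifts: the free layer supplies independent symbols at positions with bounded gaps, which is the raw material that factor-onto-full-shift constructions (marker/coding arguments in the spirit of the $d=1$ theory and its higher-dimensional extensions under weak uniform hypotheses) exploit. So the burden of proof is not a technical verification but the heart of the matter, and the intuition ``rigid skeleton $\Rightarrow$ no full-shift factor'' is unsupported; zero-entropy rigidity in one layer is not an obstruction that survives arbitrary block codes, since a code may ignore the skeleton entirely and read only the free layer. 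The BPS examples avoid this trap by making the entropy-carrying regions non-uniform in an essential way (no single finite window size suffices to locate them in all points), which is the feature your construction lacks and would need to be replaced or substantially redesigned to obtain the theorem.
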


\begin{theorem}{\rm (\cite{BPS})}\label{BPSthm2}
For every $d > 1$, there exist $\mathbb{Z}^d$ SFTs $Y$ with arbitrarily large topological entropy and with a single zero entropy SFT $Z \subset Y$ that contains all minimal subsystems of $Y$.
\end{theorem}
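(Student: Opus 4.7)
The plan is to construct, for each $d \geq 2$ and each target entropy $H > 0$, a $\mathbb{Z}^d$ SFT $Y$ with $h(Y) \geq H$ together with a zero-entropy SFT $Z \subseteq Y$ such that every minimal subsystem of $Y$ is contained in $Z$. My strategy is to take $Z$ to be a minimal aperiodic zero-entropy $\mathbb{Z}^d$ SFT with a rich hierarchical structure (for example, one arising from a Robinson-type or Mozes substitution tileset, whose existence for $d \geq 2$ is classical), and then build $Y$ by decorating $Z$-configurations with extra symbols confined to sparsely placed ``defect windows'' scattered across the hierarchy.

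Concretely, I would enlarge the alphabet of $Z$ by a factor of $N$ (labeling each position with a ``tag'' from $\{0,1,\dots,N-1\}$) and impose SFT constraints forcing: (i) the underlying $Z$-coordinates form a valid $Z$-configuration; (ii) any position with nonzero tag lies inside a designated defect window within some level-$n$ macrotile of the hierarchy, with one defect window per macrotile occupying a sub-square of side length roughly $2^{\epsilon n}$ for some fixed $\epsilon \in (0,1)$; and (iii) defect windows at different hierarchy levels are recognizable by local rules. Such constraints can be enforced as an SFT by piggybacking on the hierarchy-broadcasting local decorations that Robinson-type tilesets already carry. The density $\rho$ of positions lying in some defect window is at least $\sum_{n \geq 1} c \cdot 2^{-(1-\epsilon)nd}$, a positive constant; since the $N$-valued tags are completely free inside defect windows, $h(Y) \geq \rho \log N$, which exceeds any prescribed $H$ by taking $N$ large.

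For the minimal-subsystem containment, given any $y \in Y$ I would argue that for any $R > 0$ there exists $v \in \mathbb{Z}^d$ whose $R$-ball in $y$ is entirely free of nonzero tags: indeed, within every sufficiently large macrotile the complement of the defect windows at all smaller scales still contains sub-squares of side length tending to infinity, simply because $\epsilon < 1$. Taking a sequence of such shifts $\sigma_{v_k} y$ with clearance $R_k \to \infty$ and passing to a subsequential limit yields $z \in \overline{O(y)}$ with no nonzero tags anywhere, hence $z \in Z$. If now $M \subseteq Y$ is a minimal subsystem containing any $y \in Y$, then $M = \overline{O(y)} \ni z$, and by minimality $M = \overline{O(z)} \subseteq Z$, since $Z$ is closed and shift-invariant.

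The main obstacle will be designing the SFT rules (ii) and (iii) so they enforce the hierarchical defect placement by purely local checks; this requires exploiting the fact that Robinson-like tilesets can be augmented to transmit the level-$n$ macrotile address of each cell through local ``wiring'' symbols in a way that is itself an SFT. A secondary delicate point is proving that arbitrarily large defect-free sub-squares exist in every configuration, which should follow from an inductive counting estimate comparing the volume $2^{\epsilon nd}$ of defect windows to the macrotile volume $2^{nd}$ at each scale, together with the separation of hierarchies.
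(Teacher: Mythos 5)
This theorem is not proved in the paper at all; it is quoted from \cite{BPS}. Judged on its own terms, your construction has a genuine, fatal gap at the minimal-subsystem step. You place one defect window in \emph{every} level-$n$ macrotile for every $n$; in particular the windows attached to the lowest-level macrotiles occur syndetically in $\mathbb{Z}^d$, with a gap bounded by the side of a single low-level macrotile. Since the tags are completely free inside windows, there is a point $y \in Y$ in which \emph{every} window is filled with nonzero tags, and then nonzero tags occur in $y$ with uniformly bounded gaps; the same is true of every point of $\overline{O(y)}$, so your claim that every $y\in Y$ contains arbitrarily large tag-free balls is false, and no limit of shifts of $y$ lands in $Z$. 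The error is in the second ``delicate point'': comparing the window volume $2^{\epsilon n d}$ to the macrotile volume $2^{nd}$ only controls the \emph{density} of the window set, not the gaps in its complement; a set of arbitrarily small positive density can still meet every ball of a fixed radius, and yours does, because level-$1$ windows alone already do. The failure is not merely a gap in the argument: the set of configurations in which every window carries only nonzero tags is a nonempty subshift of $Y$ (the condition is local once windows are locally recognizable), hence contains a minimal subsystem, and no point of that minimal subsystem lies in $Z$. So the $Y$ you build does not satisfy the conclusion of the theorem.

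The missing idea is that the entropy-carrying regions must be forced to be \emph{non-syndetic in every single configuration}, not just sparse in density. In a minimal subshift every pattern that occurs does so syndetically; hence if one arranges the SFT rules so that in any point of $Y$ the separations between consecutive ``free'' regions are forced to grow without bound (for instance, increasing along a direction, governed by a counter/hierarchy layer), then no minimal subsystem can contain a free symbol at all, while the free regions can still be large enough to push the entropy above any prescribed $H$. This is essentially how the construction in \cite{BPS} works, and it is precisely the feature your one-window-per-macrotile-at-every-scale design lacks.
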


Theorem \ref{BPSthm2} does not explicitly refer to embeddings, but the following corollary about embeddings is an immediate consequence.

\begin{corollary}
If $X$ contains a minimal subshift with positive entropy and $Y$ is a $\mathbb{Z}^d$ SFT as in Theorem~\ref{BPSthm2}, then there does not exist an embedding from $X$ to $Y$.
\end{corollary}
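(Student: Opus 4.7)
The plan is to argue by contradiction: suppose $\phi\colon X \to Y$ is an embedding, and let $M \subseteq X$ be a minimal subshift with $h(M) > 0$. I will show that $\phi(M)$ is a minimal subsystem of $Y$ of positive entropy, which is impossible given the structure of $Y$.

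First I would observe that $\phi(M)$ is compact (continuous image of a compact set) and shift-invariant (since $\phi$ commutes with $\sigma_v$), hence a subshift of $Y$. Next, $\phi|_M\colon M \to \phi(M)$ is a continuous equivariant bijection between compact metric spaces; since any continuous bijection between compact Hausdorff spaces is a homeomorphism, $\phi|_M$ is in fact a topological conjugacy from $M$ onto $\phi(M)$.

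Since minimality is a conjugacy invariant, $\phi(M)$ is a minimal subshift of $Y$. By the hypothesis on $Y$ in Theorem~\ref{BPSthm2}, every minimal subsystem of $Y$ is contained in the single zero-entropy SFT $Z \subseteq Y$, so $\phi(M) \subseteq Z$. Topological entropy is monotone under inclusion of subshifts, so $h(\phi(M)) \leq h(Z) = 0$. On the other hand, topological entropy is also a conjugacy invariant, so $h(\phi(M)) = h(M) > 0$, a contradiction.

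There isn't really a hard step here; the argument is essentially a packaging of three standard facts (a continuous shift-commuting injection of a subshift is a conjugacy onto its image, minimality is conjugacy-invariant, and entropy is conjugacy-invariant and monotone). The only mildly delicate point worth flagging is the compactness argument that promotes the continuous injection $\phi|_M$ to a homeomorphism, which is what ensures both the minimality transfer and the entropy equality in the final line.
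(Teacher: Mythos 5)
Your argument is correct and is exactly the ``immediate consequence'' the paper has in mind (the paper does not even write it out): restrict the embedding to the positive-entropy minimal subshift, note the restriction is a conjugacy onto its image, and derive a contradiction with the fact that all minimal subsystems of $Y$ lie in the zero-entropy SFT $Z$. Nothing further is needed.
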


Despite the existence of systems with such seemingly pathological properties, there are some results that guarantee the existence of factor maps or embeddings between $\mathbb{Z}^d$ subshifts. However, such results generally require extremely strong hypotheses, including so-called uniform mixing conditions.
%, which posit the existence of a constant $C$ so that any pair of patterns in the language $\mathcal{L}(X)$ separated by distance at least $C$ can be combined into a new pattern in $\mathcal{L}(X)$. 
The most general results of this type in the literature are the following. (See Section~\ref{defs} for the definition of the finite extension property, \cite{BPS} for the definition of block gluing, and \cite{Lightwood2003} for the definition of square-filling mixing.) 

\begin{theorem}{\rm (\cite{McGoffPavlov_factors})}\label{BMPthm}
If $X$ is a $\mathbb{Z}^d$ block gluing subshift, $Y$ is a $\mathbb{Z}^d$ subshift with a fixed point and the finite extension property, and $h(X) > h(Y)$, then there exists a factor map from $X$ onto $Y$.
\end{theorem}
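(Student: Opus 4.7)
The plan is to build the factor map via a marker-and-filler construction in $X$, using the block gluing property to manufacture a tessellation of each point, and then coding the interior of each tile to an admissible $Y$-pattern while using the fixed point of $Y$ to pad the boundary; the finite extension property of $Y$ and the entropy inequality together ensure this coding can be made surjective and consistent.

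First, I would use block gluing of $X$ to produce, at any sufficiently large scale $N$, a marker configuration: a pattern $m$ on a cube of side $N$ and a ``spacing'' $L \gg N$ such that for every sequence of positions on a roughly $L$-spaced lattice, the block gluing property guarantees a point of $X$ containing translates of $m$ at those positions. Running this in reverse, the occurrences of a suitably chosen pattern allow us to locate a canonical grid of tiles inside any $x \in X$: each tile is a cube of side roughly $L$, with an inner ``core'' of side $L - 2r$ (where $r$ is the radius coming from the finite extension property of $Y$) surrounded by a buffer. The construction of this canonical grid is translation-equivariant modulo a bounded ambiguity, handled by a standard averaging/choice procedure.

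Second, I would fix a coding function $F$ from the set of $X$-patterns on a core (of shape a cube of side $L-2r$) into the set of $Y$-patterns on the same cube that agree with the fixed point $y_0$ on a thin frame of width $r$ around the core. Define $\phi(x)$ by setting $\phi(x) = y_0$ on the entire buffer region surrounding every core, and $\phi(x) = F(x|_{\text{core}})$ on the interior of each core. Because each core is separated from every other by a buffer of width at least $r$ filled by $y_0$, the finite extension property of $Y$ implies that as long as each $F$-image individually extends with a $y_0$-frame to a point of $Y$, the globally defined $\phi(x)$ is a point of $Y$. The map $\phi$ is continuous (local) and shift-commuting by construction of the canonical grid.

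Third, for surjectivity I would invoke the entropy inequality $h(X) > h(Y)$. The number of $Y$-patterns on a core that agree with $y_0$ on the surrounding frame of width $r$ is at most $2^{(h(Y)+o(1))L^d}$, while block gluing of $X$ allows one to realize at least $2^{(h(X)-o(1))L^d}$ distinct $X$-patterns on the core as restrictions of actual points of $X$. Hence for $L$ large, $F$ can be chosen surjective onto the set of frame-compatible $Y$-patterns on the core. Given any $y \in Y$, I would then build a preimage: on each core one selects an $X$-pattern in $F^{-1}(y|_{\text{core}})$ (using FEP applied to $Y$ to ensure $y|_{\text{core}}$ is frame-compatible with $y_0$ after one first maps $y$ to a nearby point agreeing with $y_0$ on the grid's buffers, which can be done by a standard approximation argument), and glues the chosen core-patterns together using block gluing of $X$, which produces an $x \in X$ with $\phi(x) = y$.

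The main obstacle is the construction of the canonical, translation-equivariant marker grid for $\mathbb{Z}^d$ with $d > 1$: unlike the one-dimensional case, markers cannot simply be consecutive occurrences of a distinguished pattern, and one must either adapt Lightwood-style markers or use a tagging argument carefully. A subsidiary technical issue is making sure the surjectivity argument produces preimages for \emph{every} $y \in Y$ (not just those aligned to the grid), which is where FEP is essential — it lets us modify $y$ on the buffer region to agree with $y_0$ without leaving $Y$, so the original $y$ and the modified one share the same cores up to a bounded distortion that can be absorbed by relabeling.
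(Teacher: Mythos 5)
There is a genuine gap at the very first step: you assume that from the occurrences of a marker pattern one can ``run the construction in reverse'' and assign to \emph{every} $x \in X$ a canonical, translation-equivariant grid of tiles of side roughly $L$. Block gluing gives you points of $X$ containing markers at prescribed grid positions, but it says nothing about an arbitrary point of $X$ containing any marker occurrences at all, let alone grid-arranged ones; a fixed point of $X$ (which a block gluing subshift such as a full shift certainly has) contains no useful marker occurrences whatsoever. Worse, a shift-commuting, continuous assignment of an $L$-spaced grid to every point would be a factor map from $X$ onto a nontrivial finite orbit, which is impossible whenever $X$ has a fixed point (a fixed point must map to a fixed point, and a finite orbit on the lattice $L\mathbb{Z}^d$, $L>1$, has none). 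So the map $\phi$ you describe is simply not defined on all of $X$, and the ``standard averaging/choice procedure'' you invoke cannot exist. The actual construction (from the cited work, whose relevant skeleton is recalled in Section \ref{Sect:Factors}) avoids this entirely: the factor map reads off \emph{all} occurrences of marker words wherever they happen to appear in $x$, the markers are engineered so that their mutual overlaps are tightly constrained (property (iii) there), and the regions of $x$ not covered by determined zones are filled in using the fixed point of $Y$ together with the finite extension property, so that $\phi(x)$ is defined and lies in $Y$ for every $x$, markers or not.

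Your surjectivity argument has a related defect. By padding with $y_0$ on all buffers, the image of your map would consist only of points of $Y$ agreeing with $y_0$ on a grid of frames; given an arbitrary $y \in Y$ you propose to modify it on the buffers, find a preimage of the modified point, and ``absorb the distortion by relabeling,'' but a factor map must hit $y$ itself, not a point close to it, and no mechanism in your construction recovers the original buffer symbols. In the actual proof this is handled by making the marker alphabet large enough to code not only the determined-zone content of $y$ but also the boundary behavior (this is why the count in property (i) is $M > |\mathcal{L}_{F_{k-m-g}}(Y)|\cdot|\mathcal{A}_Y|^{p(k)}$ rather than just the number of core patterns), and letters are placed and erased in stages so that every $y \in Y$ is obtained exactly from a point built, via block gluing (property (ii)), with markers on an exact grid carrying the appropriate indices. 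Your entropy comparison is the right idea for making such a coding possible, but without the equivariant-grid step and the exact-surjectivity coding, the argument as written does not prove the theorem.
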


\begin{theorem}{\rm (\cite{Lightwood2003})}
If $X$ is a $\mathbb{Z}^2$ subshift with no periodic points (i.e. $\forall x \in X$ the period set of $x$ is $\{0\}$) and $Y$ is a square-filling mixing $\mathbb{Z}^2$ SFT with $h(X) < h(Y)$, then there exists an embedding of $X$ into $Y$.
\end{theorem}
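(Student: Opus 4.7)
The plan is to adapt Krieger's classical one-dimensional embedding argument to $\mathbb{Z}^2$, exploiting both the aperiodicity of $X$ (to build markers) and the square-filling mixing of $Y$ together with the entropy gap $h(Y) > h(X)$ (to carry out a coding).

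First I would establish a two-dimensional marker lemma: for every large $N$, one constructs a clopen ``marker set'' $M_N \subset X$ such that for each $x \in X$ the marker positions $\{v \in \mathbb{Z}^2 : \sigma_v x \in M_N\}$ are pairwise separated by at least $N$ in sup-norm and are $O(N)$-dense. Aperiodicity enters through a compactness/maximality argument producing such a set equivariantly in $x$. The markers then induce, via a canonical nearest-neighbor or lexicographic rule, a translation-equivariant tiling of $\mathbb{Z}^2$ by bounded cells, where the choice of cell at each location depends only on markers within a bounded window.

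Within each cell I would reserve a thin boundary ``frame'' of width $g$ (the gluing distance for square-filling mixing of $Y$) and a much larger interior region. The strict entropy inequality $h(X) < h(Y)$ yields, for all sufficiently large $n$, strictly more $Y$-patterns of shape $[1,n]^2$ than cell-shaped $X$-patterns of comparable size, so one can fix an injective encoding from cell-shaped $X$-patterns into slightly smaller $Y$-patterns. Place each codeword into the interior of its cell, and invoke square-filling mixing to fill the frames in a globally consistent way; square-filling mixing is precisely what guarantees that the configuration so prescribed on well-separated squares extends to a point of $Y$. The resulting map $\phi : X \to Y$ is shift-commuting by the equivariance of the whole construction, continuous because each output coordinate depends on a bounded window of $x$, and injective because the markers can be read off from the image (by baking marker labels into the codewords) and each cell-interior then decodes back to the local pattern of $x$.

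The main obstacle is making the marker/cell decomposition regular enough to support a uniform encoding. Since $X$ need not be mixing, cells may have wildly varying shapes, and there is no clean analogue of the one-dimensional ``interval'' geometry used by Krieger. The natural remedy is a hierarchy of markers at several scales, so that at the top scale a definite positive fraction of cells are geometrically ``good'' (close to squares), with finitely many ``exceptional'' cell types handled by specially designated codewords; the entropy gap $h(Y) - h(X) > 0$ is used to pay for this overhead. Verifying injectivity then reduces to showing that from the image one can first detect the top-scale markers, then recursively decode each subsequent scale, which is the most delicate bookkeeping step.
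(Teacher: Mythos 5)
The statement you are proving is quoted in this paper from Lightwood's work and is not proved here, so there is no internal proof to compare against; I can only judge your outline against the known argument. Your skeleton --- equivariant markers from aperiodicity, an induced cell decomposition, injective coding of cell interiors into $Y$ paid for by $h(X)<h(Y)$, filling the leftover frames via the mixing property, and recovering the markers from the image for injectivity --- is the right overall shape, and it matches the structure the paper attributes to Lightwood (each point of $X$ is associated, equivariantly, to a tiling of $\mathbb{R}^2$ by finitely many polytope prototiles, and the coding is then built over that tiling).

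However, the proposal leaves genuine gaps at exactly the points where the theorem is hard. First, square-filling mixing only provides gluing/extension across \emph{square} shapes with gap $g$; your cells, and more seriously the connected web of frame regions between them, are not squares, so the sentence ``invoke square-filling mixing to fill the frames in a globally consistent way'' is precisely the assertion that needs proof. One needs a concrete, shift-equivariant filling scheme (controlling the geometry of the tiles so that the complement of the coded interiors can be exhausted by fillable square steps, plus a compactness argument to pass from finite regions to a point of $Y$); this is the bulk of Lightwood's work, and the present paper even remarks that this tile geometry is substantially more complicated than the parallelotope case it can handle. Second, your remedy for irregular cells --- a hierarchy of marker scales with ``a positive fraction of good cells'' and special codewords for exceptional types --- is asserted, not constructed; a positive fraction of good cells does not suffice, since every site of $\mathbb{Z}^2$ must be coded admissibly and decodably, and you do not say how the bad cells are encoded, nor verify the counting (the bound $|\mathcal{L}_S(X)|\approx e^{h(X)|S|}$ versus the number of available $Y$-patterns on a square inside the cell requires the cell shapes to have uniformly large inradius, which your Voronoi-type cells are not shown to have). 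Third, injectivity by ``baking marker labels into the codewords'' needs a recognizability argument ruling out spurious occurrences of marker codewords created across cell boundaries or inside frames; you flag this as delicate bookkeeping, but it is a substantive step, not a routine one. So the outline is a reasonable roadmap, but the essential content of the theorem --- the equivariant tiling with controlled geometry and a filling/decoding scheme compatible with square-filling mixing --- is not supplied.
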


\subsection{Known properties of random $\mathbb{Z}^d$ SFTs}\label{randomsubsec}

Here we informally summarize some relevant results from \cite{McGoffPavlov}; for formal statements and more information, see that work. In light of the main results of \cite{McGoffPavlov}, one may conclude that random $\mathbb{Z}^d$ SFTs exhibit qualitatively different behavior in two distinct parameter regimes. When $\alpha |\mathcal{A}| < 1$, a random $\mathbb{Z}^d$ SFT is empty with positive limiting probability, and when $\alpha |\mathcal{A}| \leq 1$, the entropy of a random $\mathbb{Z}^d$ SFT converges to $0$ in probability. On the other hand, when $\alpha |\mathcal{A}| > 1$, a ``typical'' random $\mathbb{Z}^d$ SFT is nonempty, has topological entropy close to $\log(\alpha |\mathcal{A}|) > 0$, and for large $m$, has many points with period set containing $\{me_i\}_{i=1}^d$ (with exponential growth rate equal to the topological entropy). Due to the necessary conditions on periodic points and entropy for factor maps and embeddings, the latter regime ($\alpha |\mathcal{A}| >1$) is more suitable for studying ``typical'' existence of factors/embeddings, and our main results will involve only this case.

\subsection{Factors and embeddings for random $\mathbb{Z}^d$ SFTs} \label{Sect:MainResults}

\textit{A priori} one might hope that factor maps and embeddings exist between a ``typical'' pair of random $\mathbb{Z}^d$ SFTs under appropriate hypotheses. Unfortunately, results of that type are impossible, due to the following fact, noted in \cite{McGoffPavlov}.

\begin{lemma}\label{perlemma}
Let $\mathcal{A}$ be an alphabet, and let $\alpha \in (0,1)$. Then for any $x$ in $\mathcal{A}^{\mathbb{Z}^d}$ with finite orbit $\mathcal{O}$, the limiting probability that $x$ is in the random $\mathbb{Z}^d$ SFT is 
$\alpha^{|\mathcal{O}|} \in (0,1)$.
\end{lemma}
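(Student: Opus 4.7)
The plan is to compute $\PP(x \in X)$ exactly for all $n$ sufficiently large and observe that the value stabilizes at $\alpha^{|\mathcal{O}|}$, so the limit is immediate. The core observation is that $x$ belongs to the random SFT if and only if none of the patterns of shape $\{1,\dots,n\}^d$ that actually appear in $x$ lie in the random forbidden set $\cF$, and since each such $n$-pattern is independently forbidden with probability $1-\alpha$, I get
\[
\PP(x \in X) \;=\; \alpha^{N_n(x)},
\]
where $N_n(x)$ denotes the number of \emph{distinct} patterns of shape $\{1,\dots,n\}^d$ that appear at some translate in $x$.

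The heart of the proof is then to show that $N_n(x) = |\mathcal{O}|$ for all sufficiently large $n$. For the upper bound, I would note that the pattern appearing at position $v$ in $x$ is determined by $\sigma_v x$ (via its restriction to $\{0,\dots,n-1\}^d$), so patterns coming from translates $v, v'$ with $\sigma_v x = \sigma_{v'} x$ coincide; this already gives $N_n(x) \leq |\mathcal{O}|$ for every $n$. For the matching lower bound, since the orbit $\mathcal{O}$ is finite, there are only finitely many ordered pairs $(y,y')$ of distinct points in $\mathcal{O}$, and for each such pair I may choose a coordinate $w(y,y') \in \Z^d$ where $y$ and $y'$ disagree. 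Taking $N$ large enough that $\{0,\dots,N-1\}^d$ contains all of the finitely many $w(y,y')$ (after an appropriate shift), for every $n \geq N$ any two distinct orbit points restrict to distinct patterns on $\{0,\dots,n-1\}^d$, which yields $N_n(x) \geq |\mathcal{O}|$.

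Combining these bounds, $\PP(x \in X) = \alpha^{|\mathcal{O}|}$ for all $n \geq N$, so the limit as $n \to \infty$ exists and equals $\alpha^{|\mathcal{O}|}$. The value lies in $(0,1)$ because $\alpha \in (0,1)$ and $|\mathcal{O}|$ is a finite positive integer. The only mildly subtle point, and thus the main thing to write out carefully, is the stabilization argument $N_n(x) = |\mathcal{O}|$ for large $n$; everything else (independence of the pattern bans, the conversion of ``$x \in X(\cF)$'' into ``no appearing $n$-pattern is in $\cF$'') is a direct unwinding of definitions.
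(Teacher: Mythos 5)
Your argument is correct, and in fact the paper itself gives no proof of this lemma --- it is quoted from the earlier work \cite{McGoffPavlov} --- so the natural comparison is simply whether your computation is sound. It is: the identity $\mathbb{P}_{n,\alpha}(x \in X) = \alpha^{N_n(x)}$ follows from the independence of the events $\{u \in \mathcal{F}\}$ over distinct $u \in \mathcal{A}^{F_n}$ together with the definition of $X(\mathcal{F})$, and the upper bound $N_n(x) \leq |\mathcal{O}|$ is exactly as you say, since the pattern at position $v$ is $\sigma_v(x)|_{F_n}$ and there are only $|\mathcal{O}|$ translates. The one place you should be explicit is the phrase ``after an appropriate shift'' in the lower bound: a disagreement coordinate $w(y,y')$ may lie outside $[0,n)^d$ for every $n$ (e.g.\ it may have negative entries), and for general distinct points $y \neq y'$ there need not be any disagreement visible in the window. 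What saves you is that every point of a finite orbit is periodic with the same finite-index period set $\Lambda$ (as the paper notes, a $d$-dimensional sublattice of $\mathbb{Z}^d$, so $\Lambda \supseteq k\mathbb{Z}^d$ for some $k$); hence $y$ and $y'$ also disagree at every point of $w(y,y') + \Lambda$, and this coset meets $[0,n)^d$ as soon as $n \geq k$. With that sentence added, $N_n(x) = |\mathcal{O}|$ for all large $n$, the probability is exactly $\alpha^{|\mathcal{O}|}$ from some $n$ on, and the conclusion $\alpha^{|\mathcal{O}|} \in (0,1)$ follows since $\alpha \in (0,1)$ and $1 \leq |\mathcal{O}| < \infty$.
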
 

Therefore, if $X$ and $Y$ are independently chosen random $\mathbb{Z}^d$ SFTs, there is always a positive limiting probability that $X$ contains a fixed point (i.e. a point whose period set is $\mathbb{Z}^d$) and $Y$ does not, which trivially implies the following statement.

\begin{lemma}
For any alphabets $\mathcal{A}_X$ and $\mathcal{A}_Y$ and parameters $\alpha_X, \alpha_Y \in (0,1)$, if $X$ and $Y$ are independently chosen random $\mathbb{Z}^d$ SFTs, then there is a positive limiting probability that there does not exist a shift-commuting continuous map from $X$ to $Y$.
\end{lemma}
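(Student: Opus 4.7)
The plan is to exhibit a simple obstruction, already telegraphed by Lemma~\ref{perlemma} and the paragraph preceding the statement: with positive limiting probability $X$ contains a shift-fixed point while $Y$ contains none, and in that situation no shift-commuting continuous map from $X$ to $Y$ can exist.

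First I would enumerate the fixed points. A point in $\mathcal{A}^{\mathbb{Z}^d}$ satisfies $\sigma_v x = x$ for every $v \in \mathbb{Z}^d$ if and only if it is a constant configuration, so the fixed points of the full shift over $\mathcal{A}_Y$ are exactly the $|\mathcal{A}_Y|$ points of the form $a^{\mathbb{Z}^d}$ with $a \in \mathcal{A}_Y$. For each such $a$, the event $\{a^{\mathbb{Z}^d} \in Y\}$ depends only on whether the single pattern $a^{\{1,\dots,n\}^d}$ is forbidden. As $a$ varies over $\mathcal{A}_Y$ these events depend on disjoint sets of independent Bernoulli forbidding variables, so they are mutually independent, and by Lemma~\ref{perlemma} each has limiting probability $\alpha_Y$ (since $|\mathcal{O}| = 1$). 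Consequently
\[
\lim_{n \to \infty} \mathbb{P}\bigl( Y \text{ has no fixed point} \bigr) = (1-\alpha_Y)^{|\mathcal{A}_Y|} > 0.
\]
The same argument applied to $X$ shows that
\[
\lim_{n \to \infty} \mathbb{P}\bigl( X \text{ has at least one fixed point} \bigr) = 1 - (1-\alpha_X)^{|\mathcal{A}_X|} > 0.
\]

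Next I would use the hypothesis that $X$ and $Y$ are sampled independently to multiply these limits and conclude that the joint event $E_n = \{X \text{ has a fixed point and } Y \text{ has none}\}$ satisfies
\[
\lim_{n \to \infty} \mathbb{P}(E_n) = \bigl(1 - (1-\alpha_X)^{|\mathcal{A}_X|}\bigr)(1-\alpha_Y)^{|\mathcal{A}_Y|} > 0.
\]
Finally I would note the elementary observation that if $\phi : X \to Y$ is continuous and shift-commuting and $x \in X$ is fixed by every $\sigma_v$, then $\sigma_v \phi(x) = \phi(\sigma_v x) = \phi(x)$ for every $v$, so $\phi(x)$ is a fixed point of $Y$; thus on $E_n$ no such $\phi$ exists, which gives the claim.

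There is no real technical obstacle here. The only mildly delicate point is keeping the independence bookkeeping straight, namely justifying that finitely many of the single-point events used above are jointly independent (both within $X$, within $Y$, and between $X$ and $Y$), and noting that for a fixed finite collection of points the assertion of Lemma~\ref{perlemma} extends from one-point to joint limits without any additional work because each event depends on only one forbidding variable.
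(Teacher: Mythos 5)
Your proposal is correct and follows essentially the same route as the paper, which derives this lemma directly from Lemma~\ref{perlemma} via the observation that with positive limiting probability $X$ has a fixed point while $Y$ has none, and shift-commuting maps must send fixed points to fixed points. Your only addition is making the independence bookkeeping explicit (constant configurations correspond to disjoint single forbidding variables), which is a harmless elaboration of the same argument.
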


In light of this lemma, we cannot hope to prove results about ``typical'' existence of factor maps or embeddings between a pair of random $\mathbb{Z}^d$ SFTs. On the other hand, the two main results of this work guarantee ``typical'' existence of factor maps/embeddings when one of the subshifts involved is fixed and the other is a random $\mathbb{Z}^d$ SFT.

Our first main result is the following theorem about factor maps. We postpone the definition of the finite extension property to Section~\ref{defs}, but note that it is a conjugacy-invariant property of $\mathbb{Z}^d$ SFTs. In this theorem we use probabilistic notation, in which $X = X(\mathcal{F})$ is the random $\mathbb{Z}^d$ SFT obtained from the random set of forbidden words $\mathcal{F}$.

%\kmmargin{Reminder: deal with $d=1$}
\begin{theorem}\label{Thm:Factors}
Let $d \geq 1$, and let $Y$ be a $\mathbb{Z}^d$ SFT that contains a fixed point and has the finite extension property. 
If $\log(\alpha_X |\mathcal{A}_X|) > h(Y)$, then there exists $\rho > 0$ such that for all large enough $n$,
\begin{equation*}
\mathbb{P}_{n,\alpha_X}\bigl(X \textrm{ factors onto } Y \bigr) \geq 1 - e^{-\rho n^d}. 
\end{equation*}
\end{theorem}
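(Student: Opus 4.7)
The plan is to reduce Theorem~\ref{Thm:Factors} to the deterministic factor theorem (Theorem~\ref{BMPthm}). Since $Y$ is already assumed to have a fixed point and the finite extension property, it suffices to produce a ``good'' event $E_n$ with $\mathbb{P}_{n,\alpha_X}(E_n) \geq 1 - e^{-\rho n^d}$ on which the random SFT $X$ is simultaneously block gluing and satisfies $h(X) > h(Y)$; on $E_n$, Theorem~\ref{BMPthm} immediately produces a factor map $X \to Y$.

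For the entropy portion of $E_n$, the hypothesis $\log(\alpha_X|\mathcal{A}_X|) > h(Y)$ together with the entropy-concentration results from \cite{McGoffPavlov} (summarized in Section~\ref{randomsubsec}) gives $h(X)$ within any prescribed $\varepsilon$ of $\log(\alpha_X|\mathcal{A}_X|)$ with probability $1 - e^{-\Omega(n^d)}$, and taking $\varepsilon$ small enough yields $h(X) > h(Y)$ with the required tail bound. The substantive work is to show that $X$ is block gluing with gap $O(n)$ on an event of the same probability. My plan is to exhibit a ``background'' periodic point $\bar Q \in X$ whose period set contains $m\mathbb{Z}^d$ for some $m = O(n)$ --- such points exist with probability $1 - e^{-\Omega(n^d)}$ by the periodic-point counts in \cite{McGoffPavlov} --- and then to show that with the required probability every globally admissible pattern $P$ in $X$ can be extended to a point of $X$ agreeing with a translate of $\bar Q$ outside an $O(n)$-neighborhood of the support of $P$. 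Block gluing then follows: for any admissible $P_1, P_2$ separated by a sufficient $O(n)$ distance, complete each with a $\bar Q$-background and identify the two backgrounds on the common exterior.

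I expect the main obstacle to be the uniform $\bar Q$-extendability statement, which must hold for every globally admissible $P$ simultaneously. The plan is to reformulate ``$P$ admits a $\bar Q$-collar'' as a local event depending only on the forbidden-pattern draws inside an $O(n)$-thick annulus around the support of $P$, and then to apply a second-moment computation to the number of valid collars. This count has mean of order $(\alpha_X|\mathcal{A}_X|)^{\Theta(n^d)}$, with variance controlled by the independence of disjoint forbidden-pattern draws (so overlapping collars can only share a bounded number of constraints), so the Paley--Zygmund inequality gives a failure probability $e^{-\Omega(n^d)}$ for each $P$. A careful union bound over $P$ --- reducing to a tractable family of ``canonical'' boundary data near $\partial\mathrm{supp}(P)$ and then shifting, rather than over all $|\mathcal{A}_X|^{\Theta(n^d)}$ patterns $P$ naively --- then produces the $e^{-\rho n^d}$ estimate. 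The strictness of $\log(\alpha_X|\mathcal{A}_X|) > h(Y) \geq 0$ (and in particular $\alpha_X|\mathcal{A}_X|>1$) is what ensures the second-moment gain strictly dominates the union-bound cost.
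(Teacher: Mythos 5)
Your reduction hinges on showing that the random SFT $X$ is block gluing (with gap $O(n)$) on an event of probability $1-e^{-\rho n^d}$, and that is precisely the step that does not go through; indeed the paper explicitly avoids it, noting that there is no reason to expect block gluing to hold for random $\mathbb{Z}^d$ SFTs with high probability. Block gluing quantifies over \emph{all} globally admissible patterns, of all sizes. Your proposed verification --- a second-moment/Paley--Zygmund bound for the number of valid $\bar Q$-collars around a given pattern, followed by a union bound over boundary data --- has a fatal exponent mismatch: the failure probability you can extract from a second-moment argument at scale $n$ is $e^{-\rho n^d}$ with a small constant $\rho$ (in the paper's Theorem~\ref{Thm:SecondMomentMethod} one can only take $\rho<\log(\alpha\lambda)$), while the number of genuinely distinct boundary data on an $O(n)$-thick annulus around even a box of side $Cn$ is $|\mathcal{A}_X|^{\Theta(n^d)}$ with a constant you cannot make small, and for patterns on boxes of side $L\gg n$ it grows like $|\mathcal{A}_X|^{\Theta(L^{d-1}n)}$, unbounded in $L$. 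Translation invariance of the distribution does not collapse this family (all events are driven by the single random set $\mathcal{F}$, so distinct boundary data are distinct, heavily correlated events), and the claim that overlapping collars ``share a bounded number of constraints'' is not true: controlling exactly these shared $F_n$-subpatterns is the content of the paper's variance estimates. Also, the needed statement is not merely that some periodic background $\bar Q$ exists (that part is fine), but that \emph{every} globally admissible pattern extends to a point agreeing with $\bar Q$ off an $O(n)$-neighborhood --- a uniform fiber-type mixing statement for which no argument is offered beyond the union bound that fails.

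The paper takes a different route that sidesteps block gluing entirely: it applies the second-moment theorem to the lattice $\Lambda_n=(k-m)\mathbb{Z}^d$ to get, with probability $1-e^{-\rho n^d}$, a huge family $G_n(\mathcal{F})$ of allowed patterns on $F_k$ with prescribed repeat structure, then pigeonholes to a subfamily $G_n''(\mathcal{F})$ all sharing the same pattern on $\partial^{in}_m(F_k)$. This single shared collar makes the patterns freely concatenable (property (ii)) and the repeat structure forbids spurious overlaps (property (iii)), so the marker-based factor map construction of \cite{McGoffPavlov_factors} applies with these patterns playing the role that block gluing played there; the entropy gap $\log(\alpha_X|\mathcal{A}_X|)>h(Y)$ is used only to make $|G_n''(\mathcal{F})|$ exceed $|\mathcal{L}_{F_{k-m-g}}(Y)|\cdot|\mathcal{A}_Y|^{p(k)}$, which yields surjectivity. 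If you want to salvage your approach, you would need a fundamentally new idea to prove a uniform gluing property for typical random SFTs; as written, the proposal assumes the hardest open part.
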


The assumption that $Y$ contains a fixed point is unavoidable; recall that $X$ has a fixed point with positive limiting probability by Lemma~\ref{perlemma}. As in Theorem~\ref{BMPthm}, we use the strong hypothesis of the finite extension property on the codomain $Y$. However, the only ``hypothesis" we need on $X$ is typicality.\\

%Our first main result concerns the ability of random $\Z^d$ SFTs to factor onto other subshifts. We require the following definitions to state the result. For any set $S \subset \mathbb{Z}^d$ and $r \geq 0$, let $B_r(S) = \{ p \in \mathbb{Z}^d : \dist(p,S) \leq r \}$. A $\mathbb{Z}^d$ subshift $Y$ has the \textbf{$k$-extension property} if there exists $k \geq 0$ and a finite set of patterns $\mathcal{F}$ such that for any pattern $u$ with shape $S \subset \mathbb{Z}^d$, if $u$ can be extended to a pattern on $B_k(S)$ that contains no pattern from $\mathcal{F}$, then there exists $y \in Y$ such that $y|_S = u$.

Our second main result is the following theorem about embeddings. It requires a notion that we call the periodic marker condition, which is defined in Section \ref{defs}. In this theorem, the subshift $Y = Y(\mathcal{F})$ is the random $\mathbb{Z}^d$ SFT obtained from the random set of forbidden words $\mathcal{F}$.

\begin{theorem}\label{Thm:Embeddings}
Let $d \geq 1$, and let $X$ be a $\mathbb{Z}^d$ subshift satisfying the periodic marker condition with parameters $m_n = n$. If $\log(\alpha_Y |\mathcal{A}_Y|) > h(X)$, then there exists $\rho>0$ such that for all large enough $n$,
\begin{equation*}
 \mathbb{P}_{n,\alpha_Y} \bigl(X \textrm{ embeds into } Y \bigr) \geq 1 - e^{-\rho n^d}.
 \end{equation*}
\end{theorem}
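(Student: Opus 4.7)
The plan is to adapt the classical Krieger/Lightwood-style embedding argument to the random setting. Choose $\epsilon>0$ with $h(X)+3\epsilon<\log(\alpha_Y|\mathcal{A}_Y|)$. The periodic marker condition with $m_n=n$ decomposes each $x\in X$ into a rigid marker skeleton supported on a translate of $n\mathbb{Z}^d$ together with ``content'' patterns filling the resulting fundamental domains, each of shape contained in $\{1,\dots,n\}^d$. By the definition of topological entropy, the number of possible content patterns on a single fundamental domain is at most $\exp(n^d(h(X)+\epsilon))$ for all sufficiently large $n$.

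On the $Y$ side, each of the $|\mathcal{A}_Y|^{n^d}$ possible $n$-patterns is independently allowed with probability $\alpha_Y$, so a standard Chernoff estimate yields that with probability at least $1-\exp(-c(\alpha_Y|\mathcal{A}_Y|)^{n^d})$ there are at least $(1-\delta)(\alpha_Y|\mathcal{A}_Y|)^{n^d}$ allowed $n$-patterns. To convert these into objects suitable for an embedding, I would work on an inflated super-scale $Nn$ for a large constant $N$, and construct an injection from $X$-content patterns on super-cells of shape $\{1,\dots,Nn\}^d$ to ``locally valid'' $Nn$-patterns of $Y$ (patterns all of whose $n$-subpatterns are allowed) that additionally share a common fixed periodic ``safe boundary'' of width $n$ obtained by tiling with a single allowed $n$-pattern $q_0$. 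A concentration argument should show that, with probability at least $1-e^{-\rho n^d}$, the number of such boundary-matched locally valid $Nn$-patterns is at least $\exp((Nn)^d(\log(\alpha_Y|\mathcal{A}_Y|)-\epsilon))$, comfortably exceeding the number of $X$-contents on an $Nn$-super-cell.

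With the assignment in hand, define $\phi\colon X\to\mathcal{A}_Y^{\mathbb{Z}^d}$ as a sliding-block code: use the marker skeleton to locate the super-cell grid in $x$ at positions of $Nn\mathbb{Z}^d$, then replace each super-cell's content by its chosen locally valid $Nn$-pattern. By construction, every $n$-window of $\phi(x)$ lies either inside a single super-cell image (hence is allowed by local validity) or inside the shared safe-boundary region, including windows straddling two adjacent super-cells along that boundary. Injectivity follows because the marker structure can be recovered from $\phi(x)$, giving the super-cell partition, whence the content via the inverse of the assignment. The main obstacle is the concentration estimate for boundary-matched locally valid $Nn$-patterns: the $n$-subpatterns of a single $Nn$-pattern can overlap, introducing dependencies among their allow-indicators. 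I expect to handle this by restricting attention to $Nn$-patterns formed by tiling the interior with allowed $n$-patterns along the $n\mathbb{Z}^d$-grid (so their allow-indicators are independent across tiles, at a cost of a factor $(1-o(1))^{(Nn)^d}$ absorbed by the entropy slack), or by a more delicate martingale/switching argument; simultaneously one must verify that $q_0$ can be chosen so that every shift of the $q_0$-tiling used in the safe boundary is allowed, which is a milder concentration estimate of the same flavor.
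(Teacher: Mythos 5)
Your outline has the right general shape (marker skeleton, a common ``safe'' collar, an injection from $X$-contents into a large family of admissible big patterns, and an entropy comparison), but there are concrete gaps. First, you misread what the periodic marker condition with $m_n=n$ provides. It does not give a skeleton on (a translate of) $n\mathbb{Z}^d$ with cells of side $n$, nor can you inflate to a super-scale $Nn\mathbb{Z}^d$ of your choosing: the hypothesis only supplies factor maps onto finite orbits whose period lattices $\Lambda_n$ form an $\{m_n\}$-regular sequence, and regularity (property (P2)) forces the fundamental domains to be much deeper than $n$ (lattice points are separated by more than $2n$, and the $n$-collar is a vanishing fraction of the cell), while their covolume is only $e^{o(n)}$ and they are general parallelotopes, not cubes. $X$ need not factor onto any $Nn\mathbb{Z}^d$-periodic orbit, so the whole super-cell setup must be run on the given lattices $\Lambda_n$; this also means the count of $X$-contents is over parallelotope-shaped domains, where bounding $|\mathcal{L}_{E_n}(X)|$ by $e^{(h(X)+\epsilon)|E_n|}$ is not just the definition of entropy (the paper invokes a result on entropy along convex shapes with growing inradius).

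Second, and more seriously, the key probabilistic input is not established by your argument. What you need is a large family of patterns on a big domain \emph{all} of whose $F_n$-subwindows, at every position in $\mathbb{Z}^d$, avoid the random forbidden set, and which share a common collar. Your fix --- tiling by allowed $n$-patterns along the $n\mathbb{Z}^d$-grid to get independence --- only controls the grid-aligned windows; the $\Theta((Nn)^d)$ windows straddling adjacent $n$-tiles \emph{inside} a super-cell are brand-new patterns, each forbidden with probability $1-\alpha$, and they are not handled by your safe boundary (which only addresses windows straddling super-cells). If instead you demand that all windows be allowed, the allow-indicators of different big patterns are heavily dependent through shared subwindows, and a first-moment or naive Chernoff bound does not give the $1-e^{-\rho n^d}$ concentration; this is exactly what the paper's second-moment theorem (with its cross-repeat bookkeeping over the regular lattice) is built to deliver, and your ``martingale/switching'' placeholder does not fill that role. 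Finally, injectivity is not just ``recover the marker structure from $\phi(x)$'': the delicate case is when the two points have different marker phases, and the paper handles it by choosing the big patterns to have \emph{no} repeated $F_n$-subwindows beyond those forced by $\Lambda_n$-periodicity, so a phase shift is detectable. Your construction imposes no such rigidity --- indeed the periodic $q_0$-collar introduces massive repetition --- so nothing in your proposal rules out $\phi(x_1)=\phi(x_2)$ for points whose skeletons differ by a non-lattice translation.
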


We remark that any fixed subshift $X$ satisfying the conclusion of Theorem~\ref{Thm:Embeddings} cannot contain any finite orbits. Indeed, if $X$ had a point with finite orbit and period set $S$, then Lemma~\ref{perlemma} implies that there is a positive limiting probability that $Y$ contains no points with period set $S$, precluding an embedding into a ``typical" $\mathbb{Z}^d$ SFT. Although a formal definition of the periodic marker condition is deferred to Section~\ref{defs}, we note here that it is a sort of structured aperiodicity condition on $X$.

\

The proofs of both Theorems~\ref{Thm:Factors} and \ref{Thm:Embeddings} involve showing that large sets of patterns with prescribed occurrences of repeated subpatterns appear inside of random $\Z^d$ SFTs with high probability. To show the existence of such sets of patterns with high probability, we use a substantial generalization of the second moment arguments employed in \cite{McGoff,McGoffPavlov}. These arguments are carried out in Section \ref{Sect:SecondMomentMethod}. Then the proofs of Theorems \ref{Thm:Factors} and \ref{Thm:Embeddings} are given in Sections \ref{Sect:Factors} and \ref{Sect:Embeddings}, respectively.

\section{Preliminaries}\label{defs}

Here we establish the definitions and facts necessary to state and prove our main results.

\subsection{Subsets of $\mathbb{Z}^d$} \label{Sect:Subsets}

Let $\{e_i\}$ denote the standard basis in $\mathbb{Z}^d$. We use interval notation to denote subsets of $\mathbb{Z}$, e.g., $[3,7) = \{3,4,5,6\}$. For a subset $E$ of $\mathbb{R}^d$ and a vector $v$ in $\mathbb{R}^d$, let $E+v = \{u+v \ : \ u \in E\}$, and for two subsets $E$ and $F$ of $\mathbb{R}^d$, let $E + F = \{ u+v : u \in E, \, v \in F \}$. 

All references to distance refer to the $\ell_{\infty}$ metric: for $u, v \in \mathbb{R}^d$, we have $d(u,v) = \|u - v \|_{\infty} =  \max\{|u_i - v_i| \ : \ 1 \leq i \leq d\}$.
For $E \subset \mathbb{Z}^d$ and $r>0$, we define the outer $r$-boundary of $E$ to be
\begin{equation*}
\partial^{out}_r(E) = \{p \in \mathbb{Z}^d \setminus E : \dist(p,E) \leq r\},
\end{equation*}
and we define the inner $r$-boundary of $E$ to be
\begin{equation*}
\partial^{in}_r(E) = \{ p \in \mathbb{Z}^d \cap E : \dist(p, \mathbb{Z}^d \setminus E) \leq r\}.
\end{equation*}
We may also refer to the topological boundary of subsets $E$ of $\mathbb{R}^d$, meaning the closure of $E$ without the interior of $E$.

For every $n$, define $F_n = [0,n)^d \subset \mathbb{Z}^d$. For any finite set $D$ in $\mathbb{Z}^d$, let $\mathcal{C}_n(D)$ be the set of hypercubes with side length $n$ that are contained in $D$:
\begin{equation*}
\mathcal{C}_n(D) = \{ p + F_n : p \in \mathbb{Z}^d, \, p + F_n \subset D\}.
\end{equation*}

Here we record an elementary fact about overlapping hypercubes in the $\ell_{\infty}$ metric.
\begin{lemma} \label{Lemma:Woods}
Suppose that for $p,s,t \in \mathbb{Z}^d$, we have $(s+F_n) \cap (p+F_n) \neq \varnothing$ and $(t+F_n) \cap (p+F_n) \neq \varnothing$. Then $d(s,t) \leq 2n$.
\end{lemma}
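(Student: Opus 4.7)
The statement is an elementary triangle inequality assertion in the $\ell_\infty$ metric, so the plan is quite short. The main observation is that whenever two translates $s + F_n$ and $p + F_n$ of the hypercube $F_n = [0,n)^d$ intersect, the translation vectors $s$ and $p$ must be close in $\ell_\infty$.

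More precisely, I would first show the following sub-claim: if $(s + F_n) \cap (p + F_n) \neq \varnothing$, then $\|s - p\|_\infty < n$. To see this, pick any $x$ in the intersection and write $x = s + a = p + b$ for some $a, b \in F_n = [0,n)^d$. Then $s - p = b - a$, and for each coordinate $i$ the difference $b_i - a_i$ lies in $(-n, n)$ since both $a_i$ and $b_i$ lie in $[0, n)$. Hence $\|s - p\|_\infty = \max_i |b_i - a_i| < n$, which proves the sub-claim.

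Applying this sub-claim to both assumptions yields $\|s - p\|_\infty < n$ and $\|t - p\|_\infty < n$. The triangle inequality for the $\ell_\infty$ norm then gives
\begin{equation*}
d(s,t) = \|s - t\|_\infty \leq \|s - p\|_\infty + \|p - t\|_\infty < 2n,
\end{equation*}
which is even slightly stronger than the claimed bound $d(s,t) \leq 2n$.

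There is no real obstacle here: the only subtlety worth flagging is the half-open convention $F_n = [0,n)^d$, which is precisely what lets the coordinate differences of $a$ and $b$ stay strictly less than $n$ in absolute value. Everything else is the triangle inequality.
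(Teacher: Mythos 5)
Your proof is correct and follows essentially the same route as the paper: reduce to coordinatewise comparisons of overlapping length-$n$ intervals (your sub-claim that intersection forces $\|s-p\|_\infty < n$) and then apply the triangle inequality. The strict inequality you obtain is a harmless strengthening of the stated bound.
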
 
\begin{proof}
By considering each of the $d$ coordinates with respect to the standard basis of $\mathbb{Z}^d$, the statement reduces to the following fact: if $I_1, I_2$, and $J$ are three intervals of length $n$ in $\Z$ such that $I_1 \cap J \neq \varnothing$ and $I_2 \cap J \neq \varnothing$, then the left endpoints of $I_1$ and $I_2$ are within distance $2n$. This fact is a consequence of the triangle inequality for the absolute value.
\end{proof}

Later we will require some notation involving lattices in $\mathbb{Z}^d$. For our purposes, a lattice is a subgroup of $\mathbb{Z}^d$ with finite index. For any lattice $\Lambda$, one may select a basis $\{p_1,\dots,p_d\} \subset \Lambda$ (which must have cardinality $d$ since $\Lambda$ has finite index). Once a basis has been selected, one may define an associated fundamental domain:
\begin{equation*}
P = \Biggl\{ \sum_{i=1}^d s_i p_i : s_i \in [0,1) \Biggr\} \subset \mathbb{R}^d.
\end{equation*}
Furthermore, we let $\vol(\Lambda)$ denote the index of $\Lambda$ in $\mathbb{Z}^d$, and note that $\vol(\Lambda) = | P \cap \mathbb{Z}^d |$. 
%Also, we define an equivalence relation on $\mathbb{Z}^d$, where $p \sim q$ whenever $p - q \in \Lambda$, and then we define the map $\eta : \mathbb{Z}^d \to P$ by letting $\eta(p)$ be the unique element $q$ in $P$ such that $p \sim q$.

\subsection{$\mathbb{Z}^d$ symbolic dynamics}

Here we provide some definitions from symbolic dynamics. Consider a natural number $d \geq 1$ and a finite set $\mathcal{A}$. 

\begin{definition}
A \textbf{pattern} over $\mathcal{A}$ is an element of $\mathcal{A}^S$ for some $S \subset \mathbb{Z}^d$, which is said to have \textbf{shape} $S$. If $S$ is finite, then any pattern with shape $S$ may be called a finite pattern.
\end{definition}

We consider patterns to be defined only up to translation, i.e., if $u \in \mathcal{A}^S$ for $S \subset \mathbb{Z}^d$ and $v \in \mathcal{A}^T$, where $T = S+p$ for some $p \in \mathbb{Z}^d$, then we write $u = v$ to mean that $u(s) = v(s+p)$ for each $s$ in $S$.

%For any patterns $v \in A^S$ and $w \in A^T$ with $S \cap T = \varnothing$, we define the concatenation $vw$ to be the pattern in $A^{S \cup T}$ defined by $(vw)(S) = v$ and $(vw)(T) = w$.

\begin{definition}
The \textbf{$\mathbb{Z}^d$-shift action} on $\mathcal{A}^{\mathbb{Z}^d}$, denoted by $\{\sigma_t\}_{t \in \mathbb{Z}^d}$, is defined by $(\sigma_t x)(s) = x(s+t)$ for $s,t \in \mathbb{Z}^d$. 
\end{definition}

We always think of $\mathcal{A}^{\mathbb{Z}^d}$ as being endowed with the product discrete topology, with respect to which it is compact. 

\begin{definition}
A \textbf{$\mathbb{Z}^d$ subshift} is a closed subset of $\mathcal{A}^{\mathbb{Z}^d}$ which is invariant under the $\mathbb{Z}^d$-shift action.
\end{definition}

\begin{definition} 
The \textbf{language} of a $\mathbb{Z}^d$ subshift $X$, denoted by $\mathcal{L}(X)$, is the set of all patterns with finite shape which appear in points of $X$. For any finite subset $S \subset \mathbb{Z}^d$, let $\mathcal{L}_S(X) := \mathcal{L}(X) \cap \mathcal{A}^S$, the set of finite patterns in the language of $X$ with shape $S$.
\end{definition}

Any subshift inherits a topology from $\mathcal{A}^{\mathbb{Z}^d}$, with respect to which it is compact. Each $\sigma_t$ is a homeomorphism on any $\mathbb{Z}^d$ subshift, and so any $\mathbb{Z}^d$ subshift, when paired with the $\mathbb{Z}^d$-shift action, is a topological dynamical system. 

Subshifts may also be defined in terms of disallowed patterns as follows. For any set $\mathcal{F}$ of finite patterns over $\mathcal{A}$, one can define the set %\kmmargin{Based on the definition of pattern given above, $X(\mathcal{F})$ is not shift-invariant without the $\sigma_t$.}
$$X(\mathcal{F}) := \bigl\{x \in \mathcal{A}^{\mathbb{Z}^d} \ : \ x|_S \notin \mathcal{F} \ \text{ for all finite } S \subset \mathbb{Z}^d \bigr\}.$$
It is well known that any $X(\mathcal{F})$ is a $\mathbb{Z}^d$ subshift, and any $\mathbb{Z}^d$ subshift may be presented in this way. 

\begin{definition}
A \textbf{$\mathbb{Z}^d$ shift of finite type (SFT)} is a $\mathbb{Z}^d$ subshift equal to $X(\mathcal{F})$ for some finite set $\mathcal{F}$ of forbidden finite patterns. %If $\mathcal{F}$ consists only of patterns consisting of pairs of adjacent letters, then $X(\mathcal{F})$ is called \textbf{nearest-neighbor}. 
\end{definition}

For the purposes of this paper, we only consider factor maps and embeddings between subshifts.

\begin{definition}
A (topological) \textbf{factor map} is any surjective continuous map $\phi$ from a $\mathbb{Z}^d$ subshift $X$ to a $\mathbb{Z}^d$ subshift $Y$ that commutes with the $\mathbb{Z}^d$ shift action. %A bijective factor map is called a \textbf{topological conjugacy}.
\end{definition}

\begin{definition}
A (topological) \textbf{embedding} is any injective continuous map $\phi$ from a $\mathbb{Z}^d$ subshift $X$ into a $\mathbb{Z}^d$ subshift $Y$ that commutes with the $\mathbb{Z}^d$ shift action. 
\end{definition}

In the case of subshifts, topological entropy may be defined as follows.
\begin{definition}\label{entdef}
The \textbf{topological entropy} of a $\zz^d$ subshift $X$ is
\[
h(X) := \lim_{n_1, \ldots, n_d \rightarrow \infty} \frac{1}{\prod_{i=1}^d n_i} \log \biggl|\mathcal{L} _{\prod_{i=1}^d [1,n_i]}(X) \biggr|.
\]
\end{definition}

\

Next we define the finite extension property from \cite{McGoffPavlov_factors}, which appears in our main result about factors of random SFTs (Theorem \ref{Thm:Factors}).

\begin{definition}\label{extprop}
For $g \in \mathbb{N}$, a $\mathbb{Z}^d$ SFT $X$ has the \textbf{$g$-extension property} if there exists a finite set $\mathcal{F}$ of forbidden finite patterns inducing $X$ with the following property: if a pattern $w$ with shape $S$ can be extended to a pattern on $S + [-g,g]^d$ that does not contain any patterns from $\mathcal{F}$, then $w \in \mathcal{L}(X)$, i.e., it can be extended to a point on all of $\mathbb{Z}^d$ that does not contain any patterns from $\mathcal{F}$. We say that $X$ has the \textbf{finite extension property} if it has the $g$-extension property for some $g$ in $\mathbb{N}$.
\end{definition}

\subsection{Allowed patterns and random $\mathbb{Z}^d$ SFTs}

Recall that for any finite set $D$ in $\mathbb{Z}^d$, we use $\mathcal{C}_n(D)$ to denote the set of hypercubes with side length $n$ that are contained in $D$. For any pattern $u \in \mathcal{A}^D$, let $W_n(u)$ denote the set of patterns with shape $F_n$ that appear in $u$:
\begin{equation*}
W_n(u) = \bigl\{ u|_S : S \in \mathcal{C}_n(D) \bigr\}.
\end{equation*}
As we only consider patterns to be defined up to translation, we always have $W_n(u) \subset \mathcal{A}^{F_n}$.

If $G$ is any set of patterns (of any shape) and $\mathcal{F} \subset \mathcal{A}^{F_n}$, then let $G(\mathcal{F})$ be the set of patterns in $G$ that do not contain any pattern from $\mathcal{F}$ as a subpattern:
\begin{equation*}
G(\mathcal{F}) = \bigl\{ u \in G : W_n(u) \cap \mathcal{F} = \varnothing \bigr\}.
\end{equation*}
When $\mathcal{F}$ is a set of forbidden patterns, elements of $G(\mathcal{F})$ may be called allowed patterns.

\

Finally, we recall the probabilistic framework used to study random $\mathbb{Z}^d$ SFTs in \cite{McGoffPavlov}.
For each $\alpha$ in the unit interval $[0,1]$ and $n \in \N$, we define a probability measure $\mathbb{P}_{n,\alpha}$ on the power set of $\mathcal{A}^{F_n}$ as follows. For any set $\mathcal{F} \subset \mathcal{A}^{F_n}$, let
\begin{equation*}
\mathbb{P}_{n,\alpha}( \{\mathcal{F}\}) = \alpha^{|\mathcal{A}|^{n^d} - |\mathcal{F}|} (1-\alpha)^{|\mathcal{F}|}.
\end{equation*}
Then $\mathbb{P}_{n,\alpha}$ induces a probability measure on $\Z^d$ SFTs by associating the set $\mathcal{F}$ to the SFT $X = X(\mathcal{F})$ defined by forbidding all the patterns in $\mathcal{F}$, i.e.
\begin{equation*}
X = \bigl\{ x \in \mathcal{A}^{\Z^d} : \forall p \in \Z^d, \, \sigma_p(x)|_{F_n} \notin \mathcal{F} \bigr\}.
\end{equation*}
When $\mathcal{F}$ is drawn according to $\mathbb{P}_{n,\alpha}$, we refer to the corresponding subshift $X$ as a random $\mathbb{Z}^d$ SFT.
Whenever we refer to an event $\mathcal{E}$ having \textbf{limiting probability one}, we mean that $\alpha$ and $\mathcal{A}$ are fixed and $\mathbb{P}_{n,\alpha}(\mathcal{E})$ tends to one as $n$ tends to infinity.

\subsection{Regular sequences of lattices}

Here we establish the properties of a sequence of lattices that we require for our second moment argument. %In the next section, we define the related periodic marker condition which we need for embeddings into random $\mathbb{Z}^d$ SFTs.
%To construct embeddings of $Y$ into random $\mathbb{Z}^d$ SFTs, we require some additional conditions on $Y$, which allow us to find suitable marker patterns in $Y$. 
%A set of configurations $\mathcal{O} \subset \mathcal{A}^{\mathbb{Z}^d}$ is called a \textbf{finite orbit} if $\mathcal{O}$ is finite and for all $x \in \mathcal{O}$, we have $\mathcal{O} = \bigcup_{p \in \Z^d} \{\sigma_p(x)\}$. 
Recall that both the fundamental domain $P_n$ associated to a lattice $\Lambda_n$ in $\mathbb{Z}^d$ and the volume of $\Lambda_n$ were defined in Section \ref{Sect:Subsets}.
\begin{definition} \label{Defn:PeriodicMarkerCondition}
Let $\{m_n\}_n$ be a sequence of natural numbers such that $m_n \geq n$ for all $n$.
A sequence $\{\Lambda_n\}_n$ of lattices in $\mathbb{Z}^d$ is said to be \textbf{$\{m_n\}$-regular} if there exists a sequence of associated fundamental domains $\{P_n\}_n$ satisfying 
\begin{enumerate}
\item[(P1)] (subexponential growth) 
\begin{equation*}
\lim_n \frac{1}{n} \log \vol(\Lambda_n) = 0,
\end{equation*}
and
\item[(P2)] (small outer boundaries) 
\begin{equation*}
\lim_n \frac{ \bigl| \partial_{m_n}^{out}(P_n \cap \mathbb{Z}^d) %\bigl\{ v \in \mathbb{Z}^d \setminus P_n : \dist(v,P_n) \leq m_n \bigr\} 
\bigr|} {\vol(\Lambda_n)} = 0.
\end{equation*}
\end{enumerate}
\end{definition}

\begin{example}\label{cubeexample}
Suppose $\{k_n\}_n$ and $\{m_n\}_n$ are sequences of natural numbers satisfying $m_n \geq n$ for all $n$,  $\frac{m_n}{k_n} \to 0$, and $n^{-1} \log k_n \to 0$. If $\Lambda_n = k_n \mathbb{Z}^d$ with fundamental domain $P_n = [0,k_n)^d$ (here the interval notation refers to the subset of $\mathbb{R}$, not $\mathbb{Z}$), then $\{\Lambda_n\}_n$ is an $\{m_n\}$-regular sequence of lattices.
\end{example}

Regular sequences of lattices also satisfy the following properties.
\begin{lemma} \label{Eqn:FDT}
If $\{\Lambda_n\}$ is an $\{m_n\}$-regular sequence of lattices, then $\{\Lambda_n\}_n$ also satisfies
\begin{enumerate}
\item[(P3)] (small inner boundaries)
\begin{equation*}
\lim_n \frac{ \bigl| \partial_{m_n}^{in}(P_n \cap \mathbb{Z}^d) % \bigl\{ v \in \mathbb{Z}^d \cap P_n : \dist(v,P_n^c) \leq m_n \bigr\} 
\bigr|}{\vol(\Lambda_n)} = 0,
\end{equation*}
and
\item[(P4)] (separation of lattice points)
for all large enough $n$, if $p, q \in \Lambda_n$ and $p \neq q$, then $d(p,q) >2n$.
\end{enumerate}
\end{lemma}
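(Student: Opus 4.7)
The plan is to deduce (P3) and (P4) from (P1) and (P2) using the tiling structure $\mathbb{Z}^d = \bigsqcup_{p \in \Lambda_n}(E+p)$ (where $E := P_n \cap \mathbb{Z}^d$) that comes from $P_n$ being a fundamental domain for $\Lambda_n$.

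First I would prove (P3) by constructing an injection $\phi : \partial^{in}_{m_n}(E) \to \partial^{out}_{m_n}(E)$. Given $q \in \partial^{in}_{m_n}(E)$, there exists $q' \in \mathbb{Z}^d \setminus E$ with $d(q,q') \leq m_n$, and by the tiling there is a unique $p(q) \in \Lambda_n \setminus \{0\}$ with $q' \in E + p(q)$. Set $\phi(q) := q - p(q)$. Then $q' - p(q) \in E$ is within $\ell_\infty$-distance $m_n$ of $\phi(q)$, and $\phi(q) \notin E$ since otherwise the two lattice-equivalent integer points $q$ and $q - p(q)$ would both lie in $P_n$, violating the fundamental domain property. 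Hence $\phi(q) \in \partial^{out}_{m_n}(E)$. The map $\phi$ is injective, because $\phi(q_1) = \phi(q_2)$ forces $q_1 - q_2 = p(q_1) - p(q_2) \in \Lambda_n$, which together with $q_1, q_2 \in E$ implies $q_1 = q_2$. Consequently $|\partial^{in}_{m_n}(E)| \leq |\partial^{out}_{m_n}(E)|$, and (P3) follows at once from (P2).

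Next I would deduce (P4) from (P3) by contradiction: if along some subsequence of $n$ there is a nonzero $v = v_n \in \Lambda_n$ with $|v|_\infty \leq 2n$, I would show $\partial^{in}_{m_n}(E) = E$ along that subsequence, contradicting the bound just established. The starting observation is that $q + v \in E^c$ for every $q \in E$, giving $\ell_\infty$-distance $|v|_\infty \leq 2n$ from $q$ to $E^c$. To upgrade this to distance $\leq m_n$, I would exploit the fact that $P_n$ and $P_n + v$ are disjoint half-open parallelepipeds in $\mathbb{R}^d$, so any line through $q$ in the direction of $v$ meets $P_n$ in a segment of Euclidean length at most $|v|_2$. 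Choosing the index $i$ with $|v_i| = |v|_\infty$ and analyzing the axis-aligned slices of $P_n$, together with the lattice-equivalence obstruction that two integer points of $P_n$ cannot differ by any nonzero element of $\Lambda_n$, I would extract for each $q \in E$ an axis-aligned integer path of length at most $\lceil |v|_\infty / 2 \rceil \leq n \leq m_n$ from $q$ that exits $P_n$ and hence ends at a point of $E^c$. This gives $\partial^{in}_{m_n}(E) = E$ and the desired contradiction.

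The main obstacle is this last geometric extraction in (P4). For $v$ aligned with a coordinate axis it reduces to a transparent 1-dimensional observation: an integer interval of length $L \leq 2n$ has every integer point within $\lceil L/2 \rceil \leq n$ of an endpoint. For a general short lattice vector $v$, however, the tilt of $v$ interacts with the shape of the chosen fundamental domain $P_n$, and one must track how the presence of $v \in \Lambda_n$ forces $P_n$ to be sufficiently narrow in some axis direction via a careful analysis of its parallelepiped structure. I expect this interplay between the lattice geometry of $\Lambda_n$ and the specific fundamental domain $P_n$ to be the crux of the proof.
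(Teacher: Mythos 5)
Your (P3) argument is correct and complete, and it is essentially the paper's: both proofs produce an injection of $\partial^{in}_{m_n}(E_n)$ into $\partial^{out}_{m_n}(E_n)$ using the fact that two distinct points of $P_n$ cannot differ by a nonzero element of $\Lambda_n$, and then invoke (P2). (The paper builds the injection slightly differently, translating by a single basis vector toward a nearby boundary face; your map $q \mapsto q - p(q)$ is a clean variant and works.)

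The (P4) part, however, has a genuine gap, and it sits exactly where you flag the ``main obstacle'': that obstacle is the entire content of the step. Assuming a nonzero $v \in \Lambda_n$ with $\|v\|_\infty \leq 2n$, what you actually prove is only that each $q \in E_n$ has an integer point of $\mathbb{Z}^d \setminus E_n$ (namely $q+v$) within distance $2n$; the upgrade to distance $m_n$ (which may equal $n$) is a plan, not an argument. The segment-length bound along the direction of $v$ yields a \emph{real} exit point of $P_n$ within $\ell_\infty$-distance roughly $\|v\|_\infty/2$ of $q$, but this does not by itself produce an \emph{integer} point outside $P_n$ at comparable distance: near a long, thin, tilted face of $P_n$ the nearby lattice points of $\mathbb{Z}^d$ may all lie back inside $P_n$, and no proof is offered that your ``axis-aligned integer path of length at most $\lceil \|v\|_\infty/2 \rceil$ exiting $P_n$'' exists. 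Moreover the target you set, $\partial^{in}_{m_n}(E_n) = E_n$, is stronger than needed, which is part of why the extraction looks hard.

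A short repair is available once you only ask for a definite proportion. Write $v = w + w'$ with $w, w' \in \mathbb{Z}^d$ and $\|w\|_\infty, \|w'\|_\infty \leq n$ (possible coordinatewise since $\|v\|_\infty \leq 2n$). For $q \in E_n$: if $q + w \notin P_n$, then $q + w \in \mathbb{Z}^d \setminus E_n$ and $d(q, q+w) \leq n$, so $q \in \partial^{in}_{n}(E_n)$; otherwise $q + w \in E_n$, and since $q + w + w' = q + v \notin P_n$ (as $q \in P_n$ and $v \in \Lambda_n \setminus \{0\}$), we get $q + w \in \partial^{in}_{n}(E_n)$. Because $q \mapsto q + w$ is injective, this gives $|\partial^{in}_{n}(E_n)| \geq \frac{1}{2} |E_n| = \frac{1}{2} \vol(\Lambda_n)$, and since $m_n \geq n$ this contradicts (P3) for large $n$, which is all (P4) requires. (The paper's own route is in the same spirit but different in detail: it takes a face of $\overline{P_n}$ separating two lattice points at distance at most $2n$ and covers $E_n$ by two slabs, each contained in $\partial^{in}_{n}(E_n)$.) As written, though, your proof of (P4) is incomplete.
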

\begin{proof}
Let $\{\Lambda_n\}_n$ be as in the hypotheses, and for each $n$, let $\eta_n : \mathbb{Z}^d \to P_n$ be the map that sends $p$ to the unique element $q \in P_n$ such that $p - q \in \Lambda_n$. For the sake of notation, let $E_n = P_n \cap \mathbb{Z}^d$.

First, we claim that $\partial^{in}_{m_n}(E_n) \subset \eta_n( \partial^{out}_{m_n}(E_n) )$. Indeed, let $v \in \partial^{in}_{m_n}(E_n)$, i.e. $v \in E_n$ and $\dist(v,P_n^c) \leq m_n$. Then there exists a vector $q$ such that $d(v,q) \leq m_n$ and $q$ lies on the topological boundary of $P_n$. Hence $q$ may be written as $\sum_i s_i p_i$, where $s_i \in [0,1]$ and there exists at least one index $i_0$ such that $s_{i_0} \in \{0,1\}$. Let $v' = v+(-1)^{s_{i_0}}p_{i_0}$ and $q' = q+(-1)^{s_{i_0}}p_{i_0}$. Since $v \in P_n$ and $v' - v \in \Lambda_n$, we have $\eta_n(v') = v$. Also, $v' \in \mathbb{Z}^d \setminus P_n$, and $q'$ is in the topological boundary of $P_n$. Finally, observe that $\dist(v',P_n) \leq d(v',q') = d(v,q) \leq m_n$. Hence $v' \in \partial^{out}_{m_n}(E_n)$. Since $v$ was arbitrary, we have verified the claim. 

By the claim, $|\partial^{in}_{m_n}(E_n)| \leq |\partial^{out}_{m_n}(E_n)|$. Property (P3) is a direct consequence of (P2) and this inequality.

Now we prove (P4). Suppose $p,q \in \Lambda_n$ and $p \neq q$; we can clearly assume without loss of generality that $p \in \overline{P_n}$ by translation, and then it suffices to treat only the case where $q \in \overline{P_n}$, since all other $q \in \Lambda_n$ are at least as far away. Now suppose for a contradiction that $d(p,q) \leq 2n$. Let $F$ be a face of $\overline{P_n}$ containing $p$ but not $q$, and let $v$ be a unit vector perpendicular to $F$ such that $v \cdot q >0$. For $j \in \{0,1\}$, let $Q_j = \{ s \in P_n \cap \mathbb{Z}^d : s \cdot v \in [jn,(j+1)n] \}$. Since $P_n$ is a parallelotope and $d(p,q) \leq 2n$, we must have $P_n \cap \mathbb{Z}^d = Q_0 \cup Q_1$. Also, note that $Q_j \subset \partial^{in}_{n}(E_n)$. Hence $P_n \cap \mathbb{Z}^d \subset \partial^{in}_{n}(E_n)$. However (P3) implies that for all large enough $n$, this containment does not hold. Thus, for all large enough $n$, we must have $d(p,q) > 2n$.
\end{proof}

\begin{definition} \label{Defn:Roy}
Consider an $\{m_n\}$-regular sequence of lattices $\{\Lambda_n\}_n$, and let $\{P_n\}_n$ be an associated sequence of fundamental domains satisfying (P1) and (P2).
Define $D_n$ to be the set of $p \in \Z^d$ such that $\dist(p,P_n) \leq m_n$, and define $\eta_n : D_n \to P_n$ to be the map that sends $p$ to the unique element $q$ of $P_n$ such that $p - q \in \Lambda_n$.
Also, to simplify notation, let $E_n = \eta_n(D_n) = P_n \cap \Z^d$.
\end{definition}

\begin{lemma} 
Let $\{\Lambda_n\}_n$ be an $\{m_n\}$-regular sequence of lattices with associated objects as in Definition \ref{Defn:Roy}. Then the following properties are also satisfied.
\begin{itemize}
\item[(P5)] The sequence of maps $\{\eta_n\}_n$ is eventually uniformly bounded-to-one: there exists $K \geq 1$ such that for all large enough $n$ and for all $r \in E_n$, we have $|\eta_n^{-1}(r)| \leq K$.
\item[(P6)] For all large enough $n$, the map $\eta_n$ is locally injective on hypercubes: for any $S \in \mathcal{C}_n(D_n)$, we have $|\eta_n(S)| = n^d$.
%\item[(P7)] For all large enough $n$, for each $(q+F_n) \in \mathcal{C}_n(D_n)$, there exists $p \in E_n$ such that $(p+t) \sim_n (q+t)$ for all $t \in F_n$.
%\item[(P8)] Suppose $S_1,S_2 \in \mathcal{C}_n(D_n)$ and $S_1$ is not equivalent to $S_2$. If $T = \eta_n(S_2)$, then for any pattern $w \in \mathcal{A}^{D_n^0 \setminus T}$, there is at most one element $u \in G_n^0$ such that $u|_{S_1} = u|_{S_2}$ and $u|_{D_n^0 \setminus T} = w$.
\end{itemize}
\end{lemma}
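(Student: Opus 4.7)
For (P6), write $S = p + F_n$ for some $p \in \mathbb{Z}^d$. Any two points of $S$ differ by an element $u \in F_n - F_n$ with $\|u\|_\infty < n$, and they are identified by $\eta_n$ precisely when $u \in \Lambda_n$. By (P4), every nonzero element of $\Lambda_n$ has $\ell_\infty$-norm greater than $2n$ once $n$ is large, so such a $u$ must be $0$. Thus $\eta_n$ is injective on $S$, giving $|\eta_n(S)| = |S| = n^d$.

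For (P5), my plan is to show that $K = 3^d$ suffices by proving that, for large $n$ and every $r \in E_n$, any $\lambda \in \Lambda_n$ satisfying $\lambda + r \in D_n$ is a ``nearest-neighbor'' lattice vector of the form $\lambda = \sum_{i=1}^d a_i p_i^{(n)}$ with each $a_i \in \{-1, 0, 1\}$, where $\{p_i^{(n)}\}$ denotes the basis of $\Lambda_n$ underlying $P_n$. To this end, suppose $\lambda + r = q + w$ with $q \in \overline{P_n}$ and $\|w\|_\infty \leq m_n$, and let $v_j \in \mathbb{R}^d$ be the dual basis vector determined by $v_j \cdot p_i^{(n)} = \delta_{ij}$. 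Writing $q = \sum b_i p_i^{(n)}$ and $r = \sum c_i p_i^{(n)}$ with $b_i \in [0,1]$ and $c_i \in [0,1)$, one computes $a_j = v_j \cdot \lambda = (b_j - c_j) + v_j \cdot w$, and H\"older's inequality gives $|a_j| \leq 1 + m_n \|v_j\|_1$. Provided $m_n \|v_j\|_1 < 1$, the integer $a_j$ must then satisfy $|a_j| \leq 1$, as required.

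It then remains to show that $m_n \|v_j\|_1 < 1$ for every $j$ and all sufficiently large $n$. Since $\|v_j\|_1 \leq \sqrt{d}\,\|v_j\|_2 = \sqrt{d}/h_j^{(n)}$ by Cauchy--Schwarz, where $h_j^{(n)} := 1/\|v_j\|_2$ is the Euclidean distance between the two faces of $P_n$ perpendicular to $v_j$, it suffices to show $h_j^{(n)}/m_n \to \infty$. I would extract this from (P2) as follows: a slab of Euclidean thickness $m_n$ just outside the face of $P_n$ perpendicular to $v_j$ has Euclidean volume $\vol(\Lambda_n) \cdot m_n / h_j^{(n)}$ and, in the non-degenerate regime guaranteed by (P4), contributes at least that many (up to a dimensional constant) points to $\partial^{out}_{m_n}(E_n)$. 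Dividing by $\vol(\Lambda_n)$ and invoking (P2) then yields $m_n/h_j^{(n)} \to 0$, completing the reduction.

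The main obstacle I anticipate is making this last geometric lower bound fully rigorous. Counting $\mathbb{Z}^d$ points in a thin, potentially tilted Euclidean slab lying just outside a skew face of the parallelotope $P_n$ requires ensuring that the ``volume equals count'' heuristic gives a genuine lower bound rather than only an upper one. This hinges on the slab being thick (using $m_n \geq n \to \infty$) and on $P_n$ having macroscopic edge lengths in every basis direction (guaranteed by (P4)); care is also needed to handle orientation of the face with respect to the coordinate axes.
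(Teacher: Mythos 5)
Your argument for (P6) is correct and essentially the paper's: if two points of a hypercube $S \in \mathcal{C}_n(D_n)$ have the same image under $\eta_n$, their difference is a nonzero element of $\Lambda_n$ of $\ell_\infty$-norm less than $n$, contradicting (P4). No issues there.

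For (P5) there is a genuine gap, exactly at the step you flag. Your whole reduction rests on the claim that (P2) forces $m_n\|v_j\|_1 < 1$, i.e.\ that every width $h_j$ of the chosen parallelotope exceeds $\sqrt{d}\,m_n$, and you propose to get this by asserting that a thickness-$m_n$ slab outside each face contributes about $\vol(\Lambda_n)\,m_n/h_j$ points to $\partial^{out}_{m_n}(E_n)$. Two obstructions make this non-routine. First, nothing in (P1)--(P4) controls the shape of the face spanned by $\{p_i\}_{i\neq j}$: it can itself be a long thin sliver (lattice-point separation bounds the lengths of lattice vectors from below, not the geometry of the fundamental domain), so a ``volume equals count'' lower bound for $\mathbb{Z}^d$-points in the tilted prism over that face is not available without further argument. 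Second, and more seriously, membership in $\partial^{out}_{m_n}(E_n)$ requires $\ell_\infty$-distance at most $m_n$ to $E_n = P_n\cap\mathbb{Z}^d$, not to the face or to $\overline{P_n}$; a point at perpendicular distance close to $m_n$ from the face need not be within $m_n$ of any integer point of $P_n$, so even a correct count of integer points in your slab does not directly populate the outer boundary, and the lower bound you need is proportional to $\vol(\Lambda_n)\,m_n/h_j$, not merely to $\vol(\Lambda_n)$. A cruder version of your idea does work: if $h_j \le (m_n-1)/\sqrt{d}$, then a single integer shift $u = k e_{i^*}$ with $k\le m_n$ along the coordinate maximizing $|(v_j)_{i^*}|$ pushes all of $E_n$ out of $P_n$, placing $\vol(\Lambda_n)$ points in $\partial^{out}_{m_n}(E_n)$ and contradicting (P2); but this only yields $m_n\|v_j\|_1 \le d+o(1)$, hence $|a_j|\le d+1$ and some larger constant $K$ (which still proves the lemma as stated, though not your claimed $K=3^d$). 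The paper avoids widths and dual bases altogether: for $q\in D_n$ with $d(q,p)\le m_n$, $p\in P_n$, the entire translate $(q-p)+P_n$ lies in the $m_n$-neighborhood of $P_n$, so by (P2) it must intersect $P_n$ for large $n$; and any translate of $P_n$ meeting $P_n$ is contained in the union of the $3^d$ copies $\sum_i v_ip_i + P_n$ with $v_i\in\{-1,0,1\}$, which immediately gives $q-\eta_n(q)=\sum_i v_ip_i$ with $v_i\in\{-1,0,1\}$ and $K=3^d$. You should either switch to that argument or supply a genuine proof of the width bound; as written, the key inequality $m_n\|v_j\|_1<1$ is unproved.
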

\begin{proof}
Let $K = 3^d$. We will show that (P5) holds with this $K$. First we prove a preliminary statement. Let
\begin{equation*}
B_n = \biggl\{ \sum_i v_i p_i + r : v_i \in \{-1,0,1\}, r \in P_n \biggr\}.
\end{equation*}
We claim that if $(v+P_n) \bigcap P_n \neq \varnothing$, then $v + P_n \subseteq B_n$. To see this, suppose that $(v+P_n) \bigcap P_n \neq \varnothing$. Then there exists a (corner) vector $c = \sum_i c_i p_i$ with $c_i \in \{0,1\}$ such that $v+c \in P_n$. Hence $v+c = r$ for some $r \in P_n$. Then for any $v+s$, with $s \in P_n$, we have $v+s = (r-c)+s = \sum_i (r_i - c_i + s_i)p_i$. Note that $r_i - c_i+s_i \in [-1,2)$, and hence $v +s \in B$.

Now we claim that for all large enough $n$, we have $D_n \subset B_n$. Let $q \in D_n$, which by definition means that there is a vector $p \in P_n$ such that $d(q,p) \leq m_n$. Let $v = q-p$. For all $u \in v + P_n$, we have $u = v+r$ for some $r \in P_n$, and then $d(u,r) = \| v+r - r\|_{\infty} = d(q,p) \leq m_n$. Hence 
\begin{equation*}
v+P_n \subset \biggl\{ u : \dist(u,P_n) \leq m_n \biggr\}.
\end{equation*}
If $(v+P_n) \bigcap P_n = \varnothing$ for infinitely many $n$, then this contradicts the small outer boundaries property (P2). Thus, for all large enough $n$, we have $(v+P_n) \bigcap P_n \neq \varnothing$. Then by our previous claim, we have that $v+P_n \subset B_n$, which in particular gives that $q = v+p \in B_n$. As $q \in D_n$ was arbitrary, we obtain that $D_n \subset B_n$.

For each element $p \in B_n$, by definition, we have that $p-\eta_n(p)$ has the form $\sum_i v_i p_i$, where $v_i \in \{-1,0,1\}$. Therefore we conclude that $|\eta_n^{-1}(r)| \leq 3^d$.

Now we show (P6). 
Let $S \in \mathcal{C}_n(D_n)$. In particular, let $S = v + F_n$. 
For $p \in \Lambda_n$, let
\begin{equation*}
S(p) = \bigl\{ t \in S : t - \eta_n(t) = -p \bigr\}.
\end{equation*}
Note that $\{S(p)\}_{p \in \Lambda_n}$ is a partition of $S$. By definition of $S(p)$, we have $\eta_n(S(p)) = p + S(p)$, which shows that $\eta_n$ is injective on $S(p)$. Furthermore,  observe that $\eta_n(S(p)) = p+S(p) \subset p + v +F_n$.  By separation of lattice points (P4), for all large enough $n$, if $p,q \in \Lambda_n$ and $p \neq q$, then $(p+F_n) \bigcap (q + F_n) = \varnothing$, which implies that $(p + v + F_n) \bigcap (q + v + F_n) = \varnothing$, and hence $\eta_n(S(p)) \bigcap \eta_n(S(q)) = \varnothing$. Therefore $\eta_n$ is injective on $S$.
\end{proof}

\subsection{The periodic marker condition}

In this section, we define the periodic marker condition, which appears in our main result on embeddings (Theorem \ref{Thm:Embeddings}).
Let $\mathcal{O}$ be a finite orbit in $\mathcal{A}^{\mathbb{Z}^d}$. Then there is a unique lattice $\Lambda$ in $\mathbb{Z}^d$ such that for any $x \in \mathcal{O}$, the period set of $x$ is $\Lambda$ (\textit{i.e.} $\{ p \in \mathbb{Z}^d : \sigma_p(x) = x \} = \Lambda$). 

\begin{definition}
A $\mathbb{Z}^d$ subshift $X$ satisfies the \textbf{periodic marker condition} with parameters $\{m_n\}_n$ if $X$ factors onto a sequence of finite orbits $\{\mathcal{O}_n\}_n$ such that the associated sequence of lattices $\{\Lambda_n\}_n$ is an $\{m_n\}$-regular sequence of lattices.
\end{definition}

\begin{example}
Suppose $X$ is a substitutionally defined $\mathbb{Z}^d$ subshift (see \cite{frank}) induced by a substitution $\tau$ with the following properties:

\begin{itemize}
%\noindent
\item[(1)] there is a rectangular prism $R$ so that for each $a \in \mathcal{A}$, the pattern $\tau(a)$ has shape $R$;

%\vspace{2mm}

%\noindent
\item[(2)] $\tau$ is uniquely decomposable as defined in \cite{solomyak}; this means that there exists $N$ so that knowledge of $x([-N,N]^d)$ determines whether $x(0)$ occupies the least coordinate (in the lexicographic order) of a block $\tau(a)$. %(this is just a $d$-dimensional version of Mosse's property of bilateral recognizability; see \cite{queffelec} for more details.)
\end{itemize}
%\noindent
Then $X$ satisfies the periodic marker condition with parameters $m_n = n$. 
\end{example}
\begin{proof}
Suppose that $\tau$ has the stated properties. For every $n$, define $R_n$ to be the shape of $\tau^n(a)$ for any $a \in A$ (which is independent of the choice of $a$). The dimensions of $R_n$ are just the $n$th powers of the dimensions of $R$. Since $\tau$ is uniquely decomposable, for every $n$ there exists $N(n)$ so that $x([-N(n),N(n)]^d)$ determines whether $x(0)$ is the least coordinate 
(lexicographically) of some $\tau^n(a)$. This means that for every $n$, there is a factor map $\phi_n$ that assigns $1$ to locations in any $x$ that are least coordinates of $\tau^n(a)$ and $0$ elsewhere. Then clearly $\phi_n(X)$ is just a periodic orbit with fundamental domain $R_n$. 

Choose any monotone increasing sequence $k_n$ with $\frac{k_n}{n} \rightarrow 0$ and $\frac{k_n}{\ln n} \rightarrow \infty$. Then the reader may check that the sequence $\{\phi_{k_n}(X)\}_n$ yields an $\{m_n\}$-regular sequence of lattices, and so $X$ satisfies the periodic marker condition with parameters $m_n = n$.
\end{proof}

\begin{remark} 
Any $\mathbb{Z}^d$ subshift that factors onto such a substitutionally defined $\mathbb{Z}^d$ subshift with properties (1) and (2) above (including the $\mathbb{Z}^d$ SFTs defined in \cite{mozes} or the examples of $\mathbb{Z}^d$ SFTs with arbitrary right recursively enumerable entropy defined in \cite{HM}) obviously also satisfies the periodic marker condition for $m_n = n$.
\end{remark}

\section{Second moment method} \label{Sect:SecondMomentMethod}

In this section we generalize the second moment arguments presented in previous work on random $\mathbb{Z}^d$ SFTs. The goal is to show that with high probability, the random $\mathbb{Z}^d$ SFT $X$ must have a large collection of patterns with prescribed structure of repetition among their subpatterns. Such collections of patterns will form the basis of our constructions of factor maps and embeddings in later sections. To establish the existence of these collections of patterns, we consider the random variable that counts how many patterns with prescribed repetition structure are allowed. Then we pursue estimates on the mean and variance of this random variable.

\subsection{Generalities}

Fix a natural number $d>1$, a finite set $\mathcal{A}$, and $\alpha \in (|\mathcal{A}|^{-1}, 1]$. We consider random $\mathbb{Z}^d$ SFTs in $\mathcal{A}^{\mathbb{Z}^d}$ distributed according to $\mathbb{P}_{n,\alpha}$. In \cite{McGoffPavlov}, a second moment method was used to show that with high probability, there exist many patterns on a large cube that can be freely stitched together to form allowable tilings of $\mathbb{Z}^d$. 
Specifically, we showed that with high probability there must be a very large collection of patterns in $\mathcal{L}(X)$ with shape $F_k$ that all agree on the $n$-boundary of $F_k$, and the common pattern on the $n$-boundary of $F_k$ may be taken to have the same subpattern on all pairs of parallel faces. (Such patterns were called periodic boundaries in \cite{McGoffPavlov}.) 
In this work, we strengthen this second moment argument to include more general domains than $F_k$ and to accommodate more stringent demands on the $F_n$-subpatterns, not only asserting the existence of repeated words with shape $F_n$ at more (pairs of) locations, but also excluding repeats at all other (pairs of) locations. %We will often use the phase ``many words with prescribed repeat structure'' to reference this sort of behavior.

%In general, we are interested in asserting that with probability tending to $1$ as $n$ tends to infinity, the random SFT has many words with a certain prescribed repeat structure for words with shape $F_n$. This means that repeated $F_n$-subwords are mandated at certain locations, and forbidden at others. (REWORD) %The particular nature of the desired structure will depend on the application of interest.
%In this section we provide a general argument, which employs a type of the second moment method.

%We begin by establishing some notation. 

%Let $n \in \N$, and let $F_n = [0,n)^d \subset \mathbb{Z}^d$.

Let us now begin to describe the prescribed repeat structure we wish to find inside many words of a random $\mathbb{Z}^d$ SFT with high probability. %Since the measures $\mathbb{P}_{n,\alpha}$ are parametrized by $n \in \N$, our desired structure is also unavoidably parametrized by $n \in \N$. 
Consider an $\{m_n\}$-regular sequence of lattices $\{\Lambda_n\}_n$ satisfying $m_n \geq n$ for all $n$, along with the associated objects in Definition \ref{Defn:Roy}.
%For each $n$, define an equivalence relation on $\Z^d$ by setting $p \sim_n q$ whenever $p-q \in \Lambda_n$. 
Let $G_n^0$ be the following set of patterns on $D_n$:
\begin{equation*}
G_n^0 = \bigl\{ u \in \mathcal{A}^{D_n} : \text{ if } \eta_n(p)  = \eta_n(q), \text{ then } u_p = u_q \bigr\}.
\end{equation*}
The reader may check that $|G_n^0| = |\mathcal{A}|^{|E_n|}$, since the map that sends $u \in G_n^0$ to $u|_{E_n}$ is a bijection onto $\mathcal{A}^{E_n}$.
Also observe that if $p \in D_n$, then $\eta_n(p) + F_n \subset E_n \cup  \partial^{out}_{n}(E_n) \subset D_n$ (since $m_n \geq n$). Therefore if $u \in G_n^0$ and $w = u|_{p+F_n}$, then $\eta_n(p) \in E_n$ and $w = u|_{\eta_n(p) +F_n}$. Thus, for each $u \in G_n^0$, we have that $|W_n(u)| \leq |E_n|$.

%\begin{lemma}
%\begin{itemize}
%\item[(A5)] For each $(q+F_n) \in \mathcal{C}_n(D_n)$, there exists $p \in D_n^0$ such that $(p+t) \sim_n (q+t)$ for all $t \in F_n$.
%\item[(A6)] Suppose $S_1,S_2 \in \mathcal{C}_n(D_n)$ and $S_1$ is not equivalent to $S_2$. If $T = \eta_n(S_2)$, then for any pattern $w \in \mathcal{A}^{D_n^0 \setminus T}$, there is at most one element $u \in G_n^0$ such that $u|_{S_1} = u|_{S_2}$ and $u|_{D_n^0 \setminus T} = w$.
%\end{itemize}
%\end{lemma}
%
%\begin{remark}
%By (A5), if $u \in G_n^0$ and $w \in W_n(u)$, then there exists $p \in D_n^0$ such that $w = u|_{p +F_n}$. Thus, if $u \in G_n^0$, then $|W_n(u)| \leq |D_n^0|$. (DO WE NEED THIS FOR EARLIER CLAIM IN DEFINITION OF $G_n$ OF ``as many as possible''?)
%\end{remark}

Let $G_n$ be the set of patterns in $G_n^0$ that achieve this upper bound:
\begin{equation*}
G_n = \bigl\{ u \in G_n^0 : |W_n(u)| = |E_n| \bigr\}.
\end{equation*}

The main result in this section is the following theorem, which states that for $\alpha > |\mathcal{A}|^{-1}$, with high probability, many patterns from $G_n$ avoid the randomly chosen set of forbidden words. Recall that for $\mathcal{F} \subset \mathcal{A}^{F_n}$, the set $G_n(\mathcal{F})$ consists of the patterns in $G_n$ that avoid all patterns in $\mathcal{F}$.

\begin{theorem} \label{Thm:SecondMomentMethod}
Let $d \geq 1$, $\alpha \in ( |\mathcal{A}|^{-1}, 1]$, and let $\{\Lambda_n\}_n$ be an $\{m_n\}$-regular sequence of lattices with associated objects as in Definition \ref{Defn:Roy}.
Then for any $c \in (0,1)$, there exists $\rho >0$ such that for all large enough $n$,
\begin{equation*}
\mathbb{P}_{n,\alpha} \Bigl( | G_n(\mathcal{F}) | \geq c (\alpha |\mathcal{A}|)^{|E_n|} \Bigr) \geq 1- e^{-\rho n^d}.
\end{equation*}
\end{theorem}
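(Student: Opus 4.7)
The plan is a second moment argument applied to $Z := |G_n(\mathcal{F})|$, upgraded to exponential concentration via Chebyshev's inequality; the $n^d$-level decay of the failure probability will come from the subexponential growth of $|E_n|$ provided by (P1), contrasted against the doubly-exponential size of $|\mathcal{A}|^{n^d}$. Every $u \in G_n$ has, by definition, $|W_n(u)| = |E_n|$ distinct $F_n$-subpatterns, each avoided by a random $\mathcal{F}$ with probability $\alpha$, so $\mathbb{E}_{n,\alpha}[Z] = |G_n|\,\alpha^{|E_n|}$. To compare with the target $(\alpha|\mathcal{A}|)^{|E_n|}$, I would bound $|G_n^0 \setminus G_n|$ by a union bound over pairs $p \neq q$ in $E_n$: since $E_n$ meets each $\Lambda_n$-orbit at most once, a coincidence $u|_{p+F_n} = u|_{q+F_n}$ yields $n^d$ equalities among values in $E_n$, and the lattice separation (P4) prevents any short cycles in the associated bipartite graph of equations inside $F_n$, so its rank is exactly $n^d$ and at most $|\mathcal{A}|^{|E_n|-n^d}$ patterns realize the coincidence. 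This gives $|G_n^0 \setminus G_n| \leq |E_n|^2 |\mathcal{A}|^{|E_n|-n^d}$, and by (P1) we conclude $|G_n| = (1-o(1))|\mathcal{A}|^{|E_n|}$.

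\textbf{Second moment and factorial moment bound.} Since $|W_n(u) \cup W_n(v)| = 2|E_n| - K(u,v)$ with $K(u,v) := |W_n(u) \cap W_n(v)|$ for $u,v \in G_n$,
\[
\mathbb{E}_{n,\alpha}[Z^2] = \alpha^{2|E_n|} \sum_{u,v \in G_n} \alpha^{-K(u,v)} \leq \alpha^{2|E_n|}\,|\mathcal{A}|^{2|E_n|}\,M,
\]
where $M := \mathbb{E}[\alpha^{-K_0(u,v)}]$ for $(u,v)$ uniform on $\mathcal{A}^{E_n} \times \mathcal{A}^{E_n}$ (with $K_0$ the set-intersection). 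Expanding $\alpha^{-K_0} = \sum_j \binom{K_0}{j}(\alpha^{-1}-1)^j$ reduces the estimate to the falling-factorial moments, which by independence of $u,v$ satisfy $\mathbb{E}[\binom{K_0}{j}] = \sum_{|S|=j} \mathbb{P}(S \subseteq W_n(u))^2$. A union bound over injections $\psi : S \hookrightarrow E_n$, followed by interchanging the order of summation, reduces the count to a sum over ordered pairs of $j$-tuples $(p_1,\ldots,p_j), (q_1,\ldots,q_j) \in E_n^j$ with distinct entries, weighted by $\mathbb{P}_{u,v}(u|_{p_i+F_n} = v|_{q_i+F_n}\text{ for all } i)$. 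For pairwise disjoint cubes this probability equals $|\mathcal{A}|^{-jn^d}$; for overlapping configurations the rank of the associated bipartite linear system on $(u|_{E_n}, v|_{E_n})$ still matches $jn^d$, because each reduction in the number of effectively constrained coordinates is compensated by the appearance of a consistency equation. The outcome is
\[
\mathbb{E}\!\Bigl[\tbinom{K_0}{j}\Bigr] \leq \frac{1}{j!}\!\left(\frac{|E_n|^2}{|\mathcal{A}|^{n^d}}\right)^{\!j}.
\]

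\textbf{Conclusion and main obstacle.} Summing over $j$ gives $M \leq \exp\!\bigl((\alpha^{-1}-1)\,|E_n|^2/|\mathcal{A}|^{n^d}\bigr) = 1 + O(|E_n|^2/|\mathcal{A}|^{n^d})$, and since (P1) forces $|E_n|^2 = e^{o(n)}$ while $|\mathcal{A}|^{n^d} = e^{n^d \log|\mathcal{A}|}$, combining with the first-moment calculation yields
\[
\Var(Z)/\mathbb{E}_{n,\alpha}[Z]^2 \leq e^{-\rho_0 n^d}
\]
for some $\rho_0 > 0$. Chebyshev's inequality then delivers $\mathbb{P}_{n,\alpha}(Z < c\,\mathbb{E}_{n,\alpha}[Z]) \leq e^{-\rho n^d}$ for any fixed $c \in (0,1)$, and since $\mathbb{E}_{n,\alpha}[Z] \geq (1-o(1))(\alpha|\mathcal{A}|)^{|E_n|}$, absorbing the $(1-o(1))$ factor into $c$ yields the theorem. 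The main obstacle is the factorial moment bound: overlapping cube configurations simultaneously shrink the number of constrained coordinates and impose consistency equations among the matched $F_n$-values, and one must verify that these two effects cancel so that the per-tuple contribution remains $|\mathcal{A}|^{-jn^d}$. The cleanest treatment tracks the rank of the induced system of scalar equations on $\mathcal{A}^{E_n} \times \mathcal{A}^{E_n}$ and uses (P4) to rule out the short alternating cycles in the combined constraint graph that could otherwise drop this rank.
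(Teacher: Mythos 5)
There is a genuine gap at the heart of your variance estimate, namely the claim that for any two $j$-tuples $(p_1,\dots,p_j)$, $(q_1,\dots,q_j)$ of distinct points of $E_n$ the probability $\mathbb{P}_{u,v}\bigl(u|_{p_i+F_n}=v|_{q_i+F_n}\ \forall i\bigr)$ is $|\mathcal{A}|^{-jn^d}$ because ``each reduction in the number of effectively constrained coordinates is compensated by a consistency equation.'' That compensation does occur for repeats \emph{within a single} pattern $u$ (an overlapping self-repeat forces a periodicity that propagates, which is exactly the content of the paper's Lemma \ref{Lemma:LukeMaye}, proved by induction using (P4)), but it fails for cross-repeats between two \emph{independent} patterns when the displacements coincide. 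Take $j=2$ with $q_1-p_1=q_2-p_2$ and $p_2=p_1+e_1$: the two constraints merge into the single condition $u(s)=v(s+(q_1-p_1))$ for $s\in (p_1+F_n)\cup(p_1+e_1+F_n)$, no consistency equation among $u$-values or $v$-values appears, and the probability is $|\mathcal{A}|^{-n^{d-1}(n+1)}$, which exceeds your claimed $|\mathcal{A}|^{-2n^d}$ by a factor $|\mathcal{A}|^{n^d-n^{d-1}}$. Consequently your factorial-moment bound $\mathbb{E}\bigl[\binom{K_0}{j}\bigr]\leq \frac{1}{j!}(|E_n|^2/|\mathcal{A}|^{n^d})^j$ is false, and these clustered same-shift configurations are precisely the dominant ones, so the error cannot be absorbed. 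A related warning sign is that your variance computation never actually invokes $\alpha>|\mathcal{A}|^{-1}$, whereas pairs with many repeats carry a factor as large as $\alpha^{-|E_n|}$ that must be beaten termwise.

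The paper's proof is organized around exactly this difficulty. Pairs $(u,v)$ are graded not only by the number $r$ of common $F_n$-subwords but also by the size $a$ of the repeat region $R(u,v)=\eta_n(R_0(u,v))$; a covering lemma (Lemma 3.8 of \cite{McGoffPavlov}) shows $R_0$ is covered by at most $2Ka/n$ cross-repeats (with $K$ from (P5)), so that $u$, this short list of cross-repeats, and $v|_{E_n\setminus R}$ determine $v$, giving $|V_{n,r,a}|\leq |G_n|\,|\mathcal{A}|^{|E_n|}\bigl(|E_n|^{4K/n}/|\mathcal{A}|\bigr)^a$ (Lemma \ref{Lemma:BooNRA}). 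This trades the deficit of constraints against the reduced count of free coordinates instead of asserting full rank. Summing over $a\geq r$ yields the geometric decay $|V_{n,r}|/|G_n|^2\leq C_1\lambda^{-r}$ with $|\mathcal{A}|^{-1}<\lambda^{-1}<\alpha$ (Lemma \ref{Lemma:NR}), which is what beats $\alpha^{-r}$ for $r>n^d$ and is where $\alpha>|\mathcal{A}|^{-1}$ enters quantitatively; the range $r\leq n^d$ is handled separately by the crude bound $|V_n|/|G_n|^2\leq C_0|E_n|^2|\mathcal{A}|^{-n^d}$ (Lemma \ref{Lemma:Candy}), and Chebyshev then gives the stated $e^{-\rho n^d}$ bound. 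Your first-moment step and the overall Chebyshev scheme agree with the paper, but to repair the proposal you would have to replace the rank claim with an analysis of this type.
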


This theorem plays a central role in the proofs of our main results on factors and embeddings (Theorems \ref{Thm:Factors} and \ref{Thm:Embeddings}).
Before proving Theorem \ref{Thm:SecondMomentMethod}, we first establish some structural lemmas. The following lemma is very similar to Lemma 3.6 in \cite{McGoffPavlov}; nonetheless, we provide a proof here for completeness.

\begin{lemma} \label{Lemma:LukeMaye}
Under the hypotheses of Theorem \ref{Thm:SecondMomentMethod}, for all large enough $n$, the following property holds: if $p, q \in E_n$, $p \neq q$, $T = \eta_n(q+F_n)$, and $w \in \mathcal{A}^{E_n \setminus T}$, then there is at most one $u \in G_n^0$ such that $u|_{p + F_n} = u|_{q +F_n}$ and $u|_{E_n \setminus T} = w$.
\end{lemma}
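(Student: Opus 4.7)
The plan is to use the $\Lambda_n$-periodicity encoded in $G_n^0$ together with the linking constraint $u|_{p+F_n} = u|_{q+F_n}$ to propagate the prescribed values $w$ onto $T$. Since any $u \in G_n^0$ is determined by $u|_{E_n}$ (via the bijection noted right after the definition of $G_n^0$), it suffices to show that $u|_T$ is uniquely determined by $w = u|_{E_n \setminus T}$ and the constraint.

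First I would rewrite the constraint as pointwise equalities inside $E_n$. Set $a = p - q$ and, for each $v \in F_n$, put $s_v = \eta_n(p+v)$ and $t_v = \eta_n(q+v)$. Property (P6) applies to both $p + F_n$ and $q + F_n$ (they lie in $\mathcal{C}_n(D_n)$ because $p,q \in E_n$ and $m_n \geq n$), so $v \mapsto s_v$ and $v \mapsto t_v$ are bijections from $F_n$ onto $S = \eta_n(p+F_n)$ and $T = \eta_n(q+F_n)$ respectively. Using the defining property of $G_n^0$, the constraint becomes $u(\tau^{-1}(t)) = u(t)$ for every $t \in T$, where $\tau^{-1} : T \to S$ is the bijection $t \mapsto \eta_n(t+a)$.

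For each $t \in T$ I would then follow the orbit $t_0 = t$, $t_{j+1} = \tau^{-1}(t_j)$, so long as $t_j$ remains in $T$. Each step yields $u(t_j) = u(t_{j+1})$, so as soon as some $t_{j+1} \in E_n \setminus T$, the value $u(t)$ is pinned down to $w(t_{j+1})$. The entire lemma therefore reduces to showing that no such orbit can cycle entirely inside $T$.

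Ruling out cycles is the main obstacle. Suppose, for a contradiction, that $t_0, \ldots, t_{k-1} \in T$ with $t_k = t_0$. Lifting through the bijection $v \mapsto t_v$ yields $w_0, \ldots, w_{k-1} \in F_n$ with $w_{j+1} \equiv w_j + a \pmod{\Lambda_n}$, so $w_{j+1} = w_j + a - \mu_j$ for some $\mu_j \in \Lambda_n$; since $w_j, w_{j+1} \in [0,n)^d$, each $\mu_j$ satisfies $\|a - \mu_j\|_\infty \leq n-1$. The crucial point is that property (P4) forces $\mu_j$ to be unique: any two such candidates $\mu, \mu'$ obey $\|\mu - \mu'\|_\infty \leq 2(n-1) < 2n$, hence $\mu = \mu'$. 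Thus $\mu_j = \mu$ is independent of $j$, and therefore $w_j = w_0 + j(a - \mu)$ literally in $\mathbb{Z}^d$. The cycle condition $w_k = w_0$ now forces $a = \mu \in \Lambda_n$, which contradicts the fact that distinct $p, q \in E_n = P_n \cap \mathbb{Z}^d$ represent distinct cosets of $\Lambda_n$.
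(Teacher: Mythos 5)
Your proof is correct, and it takes a genuinely different route from the paper's. The paper fixes two candidate patterns $u^1,u^2$ and argues directly on the physical cube $p+F_n$: using Lemma \ref{Lemma:Woods} and (P4) it shows that at most one $\Lambda_n$-translate of $q+F_n$ meets $p+F_n$, that this translate cannot equal $p+F_n$ (since $p-q\notin\Lambda_n$), hence some corner of $p+F_n$ lies outside it, and then it propagates equality of $u^1,u^2$ by a lexicographic induction anchored at that corner. You instead work entirely on the quotient $E_n$: via (P6) you encode the linking constraint as $u(t)=u(\tau^{-1}(t))$ for the bijection $\tau^{-1}:T\to S$ induced by translation by $a=p-q$ modulo $\Lambda_n$, chase orbits of this map until they exit $T$ into the region where $w$ prescribes the values, and rule out orbits trapped in $T$ by lifting a putative cycle to $F_n$, using (P4) to show the lattice correction $\mu$ is the same at every step, and deriving $a=\mu\in\Lambda_n$, contradicting $p-q\notin\Lambda_n$ (which holds since $P_n$ is a half-open fundamental parallelotope). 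Both arguments rest on the same three pillars -- (P4), local injectivity of $\eta_n$ on $n$-cubes, and $p-q\notin\Lambda_n$ -- but your cycle-exclusion argument replaces the paper's corner-finding and ordered induction with a cleaner dynamical/arithmetic statement, at the cost of one cosmetic point: when you write $\tau^{-1}(t)=\eta_n(t+a)$ you apply $\eta_n$ possibly outside $D_n$, so you should either use the canonical extension of $\eta_n$ to all of $\mathbb{Z}^d$ (as the paper itself does in the proof of Lemma \ref{Eqn:FDT}) or simply define $\tau^{-1}$ as the composition $t_v\mapsto v\mapsto s_v$ and record only the congruence $s_v\equiv t_v+a \pmod{\Lambda_n}$, which is all the cycle argument uses.
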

\begin{proof}
By separation of lattice points (P4), for all large enough $n$, the distance between any two lattice points is strictly greater than $2n$.
Let $n$ be large enough for this inequality to hold.
Now let $p,q,T$, and $w$ be as in the statement of the lemma. Note for future reference that since $p,q \in E_n$ we have that $p-q \notin \Lambda_n$. Suppose that $u^1,u^2 \in G_n^0$ both satisfy the properties in the conclusion of the lemma. We will show that $u^1 = u^2$.

First, notice that for all $p' \in D_n$ such that $\eta_n(p') \notin T$, we have 
\begin{equation} \label{Eqn:Soph}
u^1(p') = u^1(\eta_n(p')) = w(\eta_n(p')) = u^2(\eta_n(p')) = u^2(p'),
\end{equation}
where the first and last equalities hold since $u^1,u^2 \in G_n^0$.

We claim that at most one $s \in \Lambda_n$ satisfies $(s+q+F_n) \cap (p+F_n) \neq \varnothing$. Indeed, suppose that for $s,t \in \Lambda_n$, we have $(s+q+F_n) \cap (p+F_n) \neq \varnothing$ and $(t+q+F_n) \cap (p+F_n) \neq \varnothing$. Then by Lemma \ref{Lemma:Woods}, we obtain that $d(s+q,t+q) \leq 2n$, and therefore $d(s,t) \leq 2n$. By our choice of $n$, we conclude that $s = t$. 

Furthermore, if $s \in \Lambda_n$ satisfies $(s+q+F_n) \cap (p+F_n) \neq \varnothing$, then $s+q \neq p$, since $p-q \notin \Lambda_n$. Therefore there exists a corner $C$ of $p+F_n$ contained in $D_n \setminus (\Lambda_n + (q+F_n))$.

Place a lexicographic ordering on $p + F_n$ in which $C$ is the minimal element. Now we claim by induction on the lexicographic ordering that $u^1(p') = u^2(p')$ for all $p' \in p+F_n$. For the base case, Equation (\ref{Eqn:Soph}) gives that $u^1(C) = u^2(C)$, since our choice of $C$ implies that $\eta_n(C) \notin T$. Now suppose that for all $p'' < p'$ in $p+F_n$, we have $u^1(p'') = u^2(p'')$. If $\eta_n(p') \notin T$, then Equation (\ref{Eqn:Soph}) gives that $u^1(p') = u^2(p')$. On the other hand, consider $p' \in p+F_n$ such that $\eta_n(p') \in T$.  Then $p' \in s + q + F_n$ where $s$ is the unique element of $\Lambda_n$ such that $(s+q+F_n) \cap (p+F_n) \neq \varnothing$. Therefore $p' = s+q+t$ for some $t \in F_n$. Let $p'' = p +t$. Then by our choice of ordering on $p+F_n$, we have $p'' < p'$. Furthermore, by the induction hypothesis and the fact that $u^i|_{p+F_n} = u^i|_{q+F_n} = u^2|_{s+q+F_n}$, we must have $u^1(p') = u^1(p'') = u^2(p'') = u^2(p')$. This completes our induction and establishes that $u^1(p) = u^2(p)$.

Then by the fact that $u^i|_{p+F_n} = u^i|_{q+F_n}$, we see $u^1|_{q+F_n} = u^1|_{p+F_n} = u^2|_{p+F_n} = u^2|_{q+F_n}$. Finally, since $u^i \in G_n^0$, we conclude that $u^1|_T = u^2|_T$, which, together with Equation (\ref{Eqn:Soph}), gives that $u^1 = u^2$.
\end{proof}

Next we establish a simple lower bound on the cardinality of $G_n$.
\begin{lemma} \label{Lemma:Halloween}
Under the hypotheses of Theorem \ref{Thm:SecondMomentMethod},
\begin{equation*}
|G_n| \geq  |\mathcal{A}|^{|E_n|} \bigl( 1 - |E_n|^2 \cdot |\mathcal{A}|^{-n^d} \bigr).
\end{equation*}
\end{lemma}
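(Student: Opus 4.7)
The plan is to bound the complement $G_n^0 \setminus G_n$ by a union bound and then apply Lemma \ref{Lemma:LukeMaye} to each term. Recall from the discussion preceding the lemma that for every $u \in G_n^0$ and every $p + F_n \subset D_n$, we have $u|_{p+F_n} = u|_{\eta_n(p) + F_n}$, so $W_n(u) \subseteq \{u|_{r+F_n} : r \in E_n\}$ and hence $|W_n(u)| \leq |E_n|$. Therefore
\begin{equation*}
G_n^0 \setminus G_n = \bigl\{ u \in G_n^0 : \exists \, p, q \in E_n \text{ with } p \neq q \text{ and } u|_{p+F_n} = u|_{q+F_n} \bigr\}.
\end{equation*}

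First I would write $G_n^0 \setminus G_n = \bigcup_{(p,q)} A_{p,q}$, where the union runs over ordered pairs of distinct elements $p, q \in E_n$, and $A_{p,q} = \{ u \in G_n^0 : u|_{p+F_n} = u|_{q+F_n}\}$. There are at most $|E_n|^2$ such pairs. Next I would bound $|A_{p,q}|$ using Lemma \ref{Lemma:LukeMaye}: setting $T = \eta_n(q+F_n)$, the lemma says that for each $w \in \mathcal{A}^{E_n \setminus T}$ there is at most one $u \in A_{p,q}$ extending $w$, so
\begin{equation*}
|A_{p,q}| \leq |\mathcal{A}|^{|E_n \setminus T|} = |\mathcal{A}|^{|E_n| - n^d},
\end{equation*}
where in the last equality I use property (P6), which gives $|T| = |\eta_n(q+F_n)| = |F_n| = n^d$.

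Combining the union bound with the fact that $|G_n^0| = |\mathcal{A}|^{|E_n|}$ (established in the paragraph defining $G_n^0$) yields
\begin{equation*}
|G_n| = |G_n^0| - |G_n^0 \setminus G_n| \geq |\mathcal{A}|^{|E_n|} - |E_n|^2 \cdot |\mathcal{A}|^{|E_n| - n^d} = |\mathcal{A}|^{|E_n|}\bigl(1 - |E_n|^2 \cdot |\mathcal{A}|^{-n^d}\bigr),
\end{equation*}
which is the desired inequality. There is no real obstacle here beyond correctly unwinding the definitions; all the work is hidden inside Lemma \ref{Lemma:LukeMaye} (the induction showing that forcing a single repeat between two cubes in $G_n^0$ constrains the pattern on a set of size $n^d$) and the lattice property (P6) (which ensures $\eta_n$ restricted to a cube is injective, so the set $T$ has full size $n^d$). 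Once these are invoked, the lemma is an immediate union bound calculation.
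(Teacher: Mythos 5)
Your proposal is correct and follows essentially the same route as the paper: the paper also reduces to bounding $|G_n^0 \setminus G_n|$ via Lemma \ref{Lemma:LukeMaye} and (P6), the only cosmetic difference being that the paper packages the count as an injection $v \mapsto (p,q,v|_{E_n \setminus T})$ using the lexicographically minimal repeat pair, whereas you phrase it as a union bound over all pairs $(p,q)$; both give the bound $|E_n|^2 \cdot |\mathcal{A}|^{|E_n|-n^d}$. (As in the paper, the inequality is implicitly for all large enough $n$, since Lemma \ref{Lemma:LukeMaye} and (P6) only hold eventually.)
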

\begin{proof}
Let $v \in G_n^0 \setminus G_n$. As $v$ is not in $G_n$, the map that sends $p \in E_n$ to $v|_{p+F_n}$ is not injective. Hence, there are elements $p,q \in E_n$ such that $p \neq q$ and $v|_{p+F_n} = v|_{q+F_n}$, and we select the lexicographically minimal such pair for definiteness. The set $T = \eta_n(p+F_n) \subset E_n$ has cardinality $n^d$ by (P6). Furthermore, the map $v \mapsto (p,q,v|_{E_n \setminus T})$ is injective by Lemma \ref{Lemma:LukeMaye}, and its image clearly has cardinality bounded above by $|E_n|^2 \cdot |\mathcal{A}|^{|E_n|-n^d}$. Hence, we have established that
\begin{equation*} %\label{Eqn:Halloween}
|G_n^0 \setminus G_n| \leq |E_n|^2 \cdot |\mathcal{A}|^{|E_n|-n^d},
\end{equation*}
The conclusion of the lemma is a direct consequence of this inequality and the fact that $|G_n^0| = |\mathcal{A}|^{|E_n|}$.
\end{proof}

Here we set some notation for the remaining results in this section.
%We follow the notation given above. 
For each $n$, consider the random variable
\begin{equation*}
\varphi_n = \sum_{u \in G_n} \xi_u,
\end{equation*} 
where $\xi_u$ is the indicator function of the event that $u$ is allowed (i.e., $\xi_u$ is $1$ if $u$ contains no patterns in the random forbidden set $\mathcal{F}$ and $0$ otherwise). If $\mathcal{F}$ is a forbidden set of patterns drawn at random from $\mathbb{P}_{n,\alpha}$, then $\varphi_n$ is the number of patterns from $G_n$ that do not contain a pattern from the forbidden set $\mathcal{F}$, i.e. $\varphi_n = |G_n(\mathcal{F})|$. The expectation of $\varphi_n$ satisfies
\begin{equation} \label{Eqn:Expectation}
\EE \bigl[ \varphi_n \bigr] = \sum_{u \in G_n} \EE \bigl[ \xi_u \big] = \sum_{u \in G_n} \alpha^{|W_n(u)|} = \alpha^{|E_n|} \cdot |G_n|.
\end{equation}
Similarly, for the variance of $\varphi_n$, we have
\begin{align*}
\Var \bigl[ \varphi_n \bigr]  = \alpha^{2|E_n|} \sum_{\substack{u,v \in G_n \\ W_n(u) \cap W_n(v) \neq \varnothing }} \bigl(\alpha^{-| W_n(u) \cap W_n(v) |} - 1 \bigr).
\end{align*}
It is convenient to rewrite the variance as follows.
Let 
\begin{equation*}
V_n  = \bigl\{ (u,v) \in G_n \times G_n : W_n(u) \cap W_n(v) \neq \varnothing \bigr\},
\end{equation*}
and
\begin{equation*}
V_{n,r} = \bigl\{ (u,v) \in V_n : |W_n(u) \cap W_n(v)| = r \bigr\}.
\end{equation*}
%Note that
%\begin{equation*}
%(\alpha^{-1}-1) \alpha^{2M} |V_n| \leq   \Var \bigl[ \varphi_G \bigr] \leq \alpha^{2M}  \sum_{(u,v) \in V_n} \alpha^{-|W_n(u) \cap W_n(v)|} .
%\end{equation*}
%If we wish, we may write this sum as
Then
\begin{align} \label{Eqn:Variance}
\Var \bigl[ \varphi \bigr]  = \alpha^{2|E_n|}  \sum_{r=1}^{|E_n|} |V_{n,r}| \, (\alpha^{-r}-1).
\end{align}

%Our next result concerns the variance of $\varphi_n$.
We partition $V_{n,r}$ in the following way. Given $(u,v) \in V_{n,r}$, define a \textbf{cross-repeat} between $u$ and $v$ to be a pair 
$(p,q) \in E_n \times E_n$ for which $u|_{p+F_n} = v|_{q+F_n}$. Let $R_0(u,v)$ be the union of the sets $q+F_n$ over all cross repeats $(p,q)$, and let $R(u,v) = \eta_n(R_0(u,v))$. For every $a$, define
%Claim: there exists a constant $C$ and a subset $\mathcal{S}$ of the $(S_1,S_2)$ such that $|\mathcal{S}| \leq C |R|/n$ and $v$ is still determined by $(u,v|_{F_{\ell} \setminus R})$: code by lines all parallel to the $e_1$ making sure to cover $R_0$... we get $|\mathcal{S}| \leq 2 |R_0|/n$, but $|R_0| \leq 4 |R|$ (since $R_0$ maps onto $R$, with maximal pre-image cardinality equal to $4$)... then $|\mathcal{S}| \leq 8 |R|/n$.
\begin{equation*}
V_{n,r,a} = \bigl\{ (u,v) \in V_{n,r} : |R(u,v)| = a \bigr\}.
\end{equation*}

Let $(u,v) \in V_{n,r,a}$, and let $J = J(u,v)$ be the set of cross-repeats between $u$ and $v$.
Consider the map $\psi : J \to R(u,v)$ given by $\psi(p,q) = q$. By the definition of a cross-repeat, $\psi$ is well-defined. Furthermore, observe that $\psi$ is injective, since $u$ and $v$ are each in $G_n$. Since $(u,v) \in V_{n,r,a}$, we also have $|J| = r$. Hence, $r = |J| = |\psi(J)| \leq |R| = a$. Therefore, $V_{n,r} = \bigcup_{a \geq r} V_{n,r,a}$, meaning that we can bound $\Var \bigl[ \varphi \bigr]$ from above by way of upper bounds on $|V_{n,r,a}|$; the next lemma gives such a bound.

\begin{lemma} \label{Lemma:BooNRA}
Let $K$ be defined by (P5). Then
\begin{equation*}
|V_{n,r,a}| \leq |G_n| |\mathcal{A}|^{|E_n|} \biggl( \frac{ |E_n|^{4K/n} }{|\mathcal{A}|} \biggr)^a.
\end{equation*}
\end{lemma}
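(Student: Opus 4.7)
The plan is to bound $|V_{n,r,a}|$ by constructing an injection from $V_{n,r,a}$ into a set of triples $(u, \mathcal{T}, w)$, where $u$ contributes the factor $|G_n|$, $w = v|_{E_n \setminus R(u,v)}$ contributes $|\mathcal{A}|^{|E_n|-a}$, and $\mathcal{T}$ is a small family of cross-repeats whose blocks cover $R_0(u,v)$. The core of the argument is to produce a cover $\mathcal{T}$ with $|\mathcal{T}| \leq 2Ka/n$; since each cross-repeat is specified by a pair $(p,q) \in E_n \times E_n$, this will give at most $|E_n|^{4Ka/n} = (|E_n|^{4K/n})^a$ choices for $\mathcal{T}$ (up to constants absorbed into $4K$), which is the missing factor in the claim.

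To build the cover, I will slice $R_0$ by lines parallel to $e_1$. For $y \in \mathbb{Z}^{d-1}$ set $L_y = \{x \in \mathbb{Z}^d : (x_2,\dots,x_d)=y\}$. Each cross-repeat block $q+F_n$ either misses $L_y$ entirely or intersects it in the length-$n$ interval $\{q_1,\dots,q_1+n-1\}$, precisely when $q_j \leq y_{j-1} < q_j+n$ for all $j \geq 2$. Hence $R_0 \cap L_y$ is a finite union of $n$-intervals, and each of its connected components has length at least $n$. A greedy one-dimensional interval cover uses at most $\lceil \ell_c/n \rceil \leq \ell_c/n + 1$ of these intervals on each component $c$ of length $\ell_c$; summing over components and using that the number of components of $R_0 \cap L_y$ is at most $|R_0 \cap L_y|/n$ (since each has length $\geq n$) yields a slice-wise cover $\mathcal{T}_y$ with $|\mathcal{T}_y| \leq 2|R_0 \cap L_y|/n$. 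Setting $\mathcal{T} = \bigcup_y \mathcal{T}_y$ and invoking property (P5) (which for large $n$ gives $|R_0| \leq K|R| = Ka$), I obtain
\begin{equation*}
|\mathcal{T}| \leq \sum_{y} |\mathcal{T}_y| \leq \sum_y \frac{2|R_0 \cap L_y|}{n} = \frac{2|R_0|}{n} \leq \frac{2Ka}{n}.
\end{equation*}

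Next I will verify that $(u, \mathcal{T}, w)$ uniquely determines $(u,v)$. Given $u$ and $\mathcal{T}$, the set $R = \eta_n\bigl(\bigcup_{(p,q)\in\mathcal{T}}(q+F_n)\bigr)$ is reconstructed. For each $q' \in R$, pick any $(p,q) \in \mathcal{T}$ with $q' \in \eta_n(q+F_n)$ and write $q' = \eta_n(q+s)$ for some $s \in F_n$; then $v(q') = v(q+s) = u(p+s)$, a value unambiguous because the cross-repeats in $\mathcal{T}$ genuinely come from $(u,v)$ and therefore agree on overlaps. The remaining values $v|_{E_n \setminus R} = w$ complete $v|_{E_n}$, and $v \in G_n^0$ extends this to all of $D_n$ (and hence to $v$ on $\mathbb{Z}^d$). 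Multiplying the three factors and rearranging using $|\mathcal{A}|^{|E_n|-a} = |\mathcal{A}|^{|E_n|}|\mathcal{A}|^{-a}$ produces the stated bound.

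The main obstacle I foresee is the one-dimensional covering estimate, and in particular the structural observation that each connected component of $R_0 \cap L_y$ has length at least $n$. This is exactly the point where the argument uses the fact that $R_0$ is a union of $n$-cubes, a direct consequence of the definition of cross-repeats. Once this structural fact is in place, the slice-wise summation $\sum_y |R_0 \cap L_y| = |R_0|$ and the rest of the bookkeeping (including the polynomial overhead from summing over possible sizes of $\mathcal{T}$, which can be absorbed into the constant $4K$) are routine.
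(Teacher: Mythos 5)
Your argument is correct and structurally the same as the paper's: both establish the bound via the injection $(u,v) \mapsto (u, \mathcal{S}, v|_{E_n \setminus R})$, where the third coordinate contributes $|\mathcal{A}|^{|E_n|-a}$ and the covering family of cross-repeats has size at most $2|R_0|/n \leq 2Ka/n$ by (P5). The one place you diverge is that the paper simply cites Lemma 3.8 of \cite{McGoffPavlov} for the existence of a subfamily of at most $2|R_0|/n$ cross-repeats whose blocks cover $R_0$, whereas you re-prove this covering fact from scratch by slicing $R_0$ along lines parallel to $e_1$; this makes the proof self-contained at no real cost, and your structural observations (each block meets a line in an $n$-interval contained in a single component, each component has length at least $n$, $\sum_y |R_0 \cap L_y| = |R_0|$) are all sound. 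The only inaccuracy is the per-component count: a greedy (indeed even optimal) cover of a length-$\ell_c$ component by length-$n$ intervals of the family can require more than $\lceil \ell_c/n \rceil$ intervals --- take the family of intervals shifted by $n-1$, namely $[i(n-1), i(n-1)+n)$, every one of which is needed --- so the step "greedy uses at most $\lceil \ell_c/n \rceil$" is false as stated. However, after the first greedy choice, any two consecutive choices cover at least $n+1$ new points between them, so the greedy cover of a component has size at most $2\ell_c/n$; summing over components recovers exactly your slice bound $|\mathcal{T}_y| \leq 2|R_0 \cap L_y|/n$, hence $|\mathcal{T}| \leq 2|R_0|/n \leq 2Ka/n$, and the final estimate is unaffected. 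The remaining bookkeeping issue you flag (that $2Ka/n$ need not be an integer and $\mathcal{T}$ has variable size, so one should count ordered lists with repetition or absorb a polynomial factor) is handled at the same level of informality in the paper itself, so it is not a gap relative to the paper's standard.
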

\begin{proof}
Let $(u,v) \in V_{n,r,a}$, and let $R_0 = R_0(u,v)$ and $R = R(u,v)$ be as above. 
%Consider the set of pairs $(S_1,S_2)$ such that $u|_{S_1} = v|_{S_2}$ and $S_1$ (resp. $S_2$) is the lexicographically first occurrence of this pattern in $u$ (resp. $v$). Let $R_0(u,v)$ be the union of all such $S_2$, and let $R(u,v) = \eta(R_0(u,v))$ be the set of locations $t \in D_0$ such that $t \sim p$ for some $p \in R_0$. 
By Lemma 3.8 from \cite{McGoffPavlov}, there exists a subset $\mathcal{S}$ of the cross-repeats such that $|\mathcal{S}| \leq 2 |R_0| /n$ and
\begin{equation*}
R_0 = \bigcup_{(p,q) \in \mathcal{S}} (q+F_n).
\end{equation*}
By (P5), for each $t \in E_n$, the set $\eta_n^{-1}(t)$ contains at most $K$ elements. Thus, $|R_0| \leq K |R|$, and we have that $|\mathcal{S}| \leq 2K |R| / n = 2K a/n$.

Let $\psi(u,v) = (u, \mathcal{S}, v|_{E_n \setminus R})$. By construction, $\psi$ is injective: $u$ and $\mathcal{S}$ determine $v|_{R_0}$, which then determines $v|_{R}$, and in combination with $v|_{E_n \setminus R}$, this determines $v|_{E_n}$, which determines $v$. By the obvious bounds on the image of $\psi$, we have
\begin{align*}
|V_{n,r,a}| = | \psi(V_{n,r,a}) | & \leq |G_n| \cdot \bigl[|E_n|^2 \bigr]^{2K a/n} \cdot |\mathcal{A}|^{|E_n|-a} \\
& =  |G_n| \cdot |\mathcal{A}|^{|E_n|} \biggl( \frac{ |E_n|^{4K/n} }{|\mathcal{A}|} \biggr)^a.
\end{align*}
\end{proof}

%Let $\lambda>1$ be such that $|\mathcal{A}|^{-1} < \lambda^{-1} < \alpha$.
%By (A2), we may assume that $n$ is large enough that $|D_0|^{4K/n} \cdot |\mathcal{A}|^{-1} < \lambda^{-1}$.

\begin{lemma} \label{Lemma:NR}
Let $\lambda>1$ be such that $|\mathcal{A}|^{-1} < \lambda^{-1} < \alpha$.
There exists a constant $C_1$ such that for all large enough $n$,
\begin{equation*}
\frac{|V_{n,r}|}{|G_n|^2} \leq C_1 \lambda^{-r}.
\end{equation*}
\end{lemma}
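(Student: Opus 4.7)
The plan is to decompose $V_{n,r}$ according to the value of $a = |R(u,v)|$, apply the per-$(r,a)$ bound from Lemma~\ref{Lemma:BooNRA}, sum geometrically in $a$, and then control the remaining factor $|\mathcal{A}|^{|E_n|}/|G_n|$ by the lower bound from Lemma~\ref{Lemma:Halloween}. The choice of $\lambda$ is dictated by the hypothesis $|\mathcal{A}|^{-1}<\lambda^{-1}<\alpha$, which is equivalent to $\alpha^{-1}<\lambda<|\mathcal{A}|$, and in particular gives $|\mathcal{A}|/\lambda > 1$.

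The key numerical input is that $|E_n|^{4K/n}/|\mathcal{A}| \leq \lambda^{-1}$ for all large $n$. This follows from property (P1): since $\log |E_n| = \log\vol(\Lambda_n) = o(n)$, we have
\begin{equation*}
|E_n|^{4K/n} = \exp\!\bigl((4K/n)\log|E_n|\bigr) \longrightarrow 1,
\end{equation*}
and since $|\mathcal{A}|/\lambda > 1$, the claimed inequality holds eventually. Inserting this into Lemma~\ref{Lemma:BooNRA} and dividing by $|G_n|^2$ gives, for all large $n$ and all $1 \leq r \leq a \leq |E_n|$,
\begin{equation*}
\frac{|V_{n,r,a}|}{|G_n|^2} \leq \frac{|\mathcal{A}|^{|E_n|}}{|G_n|} \cdot \lambda^{-a}.
\end{equation*}

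Since $V_{n,r} = \bigcup_{a \geq r} V_{n,r,a}$, summing the geometric series in $a$ yields
\begin{equation*}
\frac{|V_{n,r}|}{|G_n|^2} \leq \frac{|\mathcal{A}|^{|E_n|}}{|G_n|} \cdot \frac{\lambda^{-r}}{1-\lambda^{-1}}.
\end{equation*}
To finish, I would invoke Lemma~\ref{Lemma:Halloween}: it provides $|G_n| \geq |\mathcal{A}|^{|E_n|}(1 - |E_n|^2 |\mathcal{A}|^{-n^d})$. Because $|E_n|$ grows subexponentially in $n$ by (P1) while $|\mathcal{A}|^{n^d}$ grows superexponentially, the error factor $|E_n|^2 |\mathcal{A}|^{-n^d}$ tends to $0$, so $|\mathcal{A}|^{|E_n|}/|G_n| \leq 2$ for all large enough $n$. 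Setting $C_1 := 2/(1-\lambda^{-1})$ then yields the claim.

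There is no substantive obstacle here: the lemma is a routine consolidation of the preceding structural lemmas, and the only quantitative input beyond them is the observation $|E_n|^{4K/n} \to 1$, which is an immediate consequence of subexponential growth of $\vol(\Lambda_n)$. The ``hard work'' has already been done in Lemmas~\ref{Lemma:LukeMaye}, \ref{Lemma:Halloween}, and \ref{Lemma:BooNRA}.
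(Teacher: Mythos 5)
Your proof is correct and follows essentially the same route as the paper: decompose $V_{n,r}$ over $a \geq r$, use Lemma~\ref{Lemma:BooNRA} together with (P1) and the choice of $\lambda$ to get the geometric bound $|E_n|^{4K/n}/|\mathcal{A}| \leq \lambda^{-1}$, sum the series, and absorb $|\mathcal{A}|^{|E_n|}/|G_n|$ into the constant via Lemma~\ref{Lemma:Halloween}. Your write-up just makes explicit the steps the paper compresses (the limit $|E_n|^{4K/n} \to 1$ and the bound $|\mathcal{A}|^{|E_n|}/|G_n| \leq 2$), and the explicit constant $C_1 = 2/(1-\lambda^{-1})$ is fine.
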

\begin{proof}
%Let $(u,v) \in V_{n,r}$, and consider the set $J$ of cross-repeats. Let $R(u,v)$ be defined as above, and let $a = |R(u,v)|$.
%pairs $(S_1,S_2)$ such that $u|_{S_1} = v|_{S_2}$ and $S_1$ (resp. $S_2$) is the lexicographically first occurrence of this pattern in $u$ (resp. $v$). 
%Let $a = |R(u,v)|$. 

By the union bound, Lemma \ref{Lemma:BooNRA}, property (P1), and our choice of $\lambda$, there exists a constant $C'>0$ such that for all large enough $n$,
\begin{align*}
|V_{n,r}| \leq \sum_{a \geq r} |V_{n,r,a}| \leq |G_n| \cdot |\mathcal{A}|^{|E_n|} \sum_{a \geq r} \lambda^{-a} \leq C' |G_n| \cdot |\mathcal{A}|^{|E_n|} \lambda^{-r}.
\end{align*}
Dividing by $|G_n|^2$ and using that $|G_n|/|\mathcal{A}|^{|E_n|} \to 1$ (from Lemma \ref{Lemma:Halloween}), we see that there exists $C_1>0$ such that for all large enough $n$, 
\begin{equation*}
\frac{|V_{n,r}|}{|G_n|^2} \leq C_1 \lambda^{-r}.
\end{equation*}
\end{proof}

%Additionally, we require the following estimate.
\begin{lemma} \label{Lemma:Candy}
There exists $C_0>1$ such that for all large enough $n$,
\begin{equation*}
\frac{|V_n|}{|G_n|^2} \leq \frac{C_0 |E_n|^2}{ |\mathcal{A}|^{n^d}}.
\end{equation*}
\end{lemma}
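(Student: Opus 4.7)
The plan is a direct counting argument: upper bound $|V_n|$ by fixing $u$ first, then counting the constraints placed on $v$ by the existence of a cross-repeat.

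First, I would fix any $u \in G_n$; there are $|G_n|$ such choices. For $v$ to contribute a pair $(u,v) \in V_n$, by definition there must exist $p, q \in E_n$ with $u|_{p+F_n} = v|_{q+F_n}$. I would enumerate the $(p,q)$ pair, incurring a factor of at most $|E_n|^2$. For each such $(p,q)$, the pattern $v|_{q+F_n}$ is forced to equal $u|_{p+F_n}$. Since $v \in G_n \subset G_n^0$, the value of $v$ on $q + F_n$ is determined by its restriction to $T := \eta_n(q+F_n)$, and by property (P6), for all large enough $n$ the set $T$ has cardinality exactly $n^d$. Thus the constraint fixes $v$ on exactly $n^d$ positions of $E_n$; since $v \in G_n^0$ is determined by $v|_{E_n}$, the remaining $|E_n| - n^d$ positions may be filled in at most $|\mathcal{A}|^{|E_n| - n^d}$ ways. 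Relaxing the membership $v \in G_n$ to $v \in G_n^0$ only loses an upper bound, so I obtain
\begin{equation*}
|V_n| \leq |G_n| \cdot |E_n|^2 \cdot |\mathcal{A}|^{|E_n| - n^d}.
\end{equation*}

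Dividing by $|G_n|^2$ and rewriting gives
\begin{equation*}
\frac{|V_n|}{|G_n|^2} \leq \frac{|E_n|^2}{|\mathcal{A}|^{n^d}} \cdot \frac{|\mathcal{A}|^{|E_n|}}{|G_n|}.
\end{equation*}
By Lemma \ref{Lemma:Halloween}, combined with property (P2) (and the fact that $|E_n| = \vol(\Lambda_n)$ grows subexponentially in $n$ by (P1), so $|E_n|^2 |\mathcal{A}|^{-n^d} \to 0$), the ratio $|\mathcal{A}|^{|E_n|}/|G_n|$ converges to $1$. Consequently, this ratio is bounded above by some constant $C_0 > 1$ for all large enough $n$, which yields the desired inequality.

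I do not anticipate a substantive obstacle here; the only point requiring attention is invoking (P6) to ensure that knowing $v|_{q+F_n}$ fixes exactly $n^d$ coordinates of $v|_{E_n}$ (rather than possibly fewer, which would weaken the bound), and remembering that $G_n^0$ rather than $G_n$ is the natural set on which to carry out the free-choice count. Both are routine once the setup of Section 3 is in hand.
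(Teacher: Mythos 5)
Your proposal is correct and follows essentially the same argument as the paper: both bound $|V_n|$ by $|G_n|\cdot|E_n|^2\cdot|\mathcal{A}|^{|E_n|-n^d}$ via a cross-repeat $(p,q)$ (the paper fixes the lexicographically minimal one to get an injection $(u,v)\mapsto(u,p,q,v|_{E_n\setminus T})$, while you take a union bound over $(p,q)$ — the same count), using (P6) for $|T|=n^d$ and Lemma \ref{Lemma:Halloween} with (P1) for $|\mathcal{A}|^{|E_n|}/|G_n|\to 1$. The only nitpick is that your parenthetical appeal to (P2) is unnecessary; (P1) alone gives $|E_n|^2|\mathcal{A}|^{-n^d}\to 0$, as you also note.
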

\begin{proof}
Let $(u,v) \in V_n$. Let $(p,q)$ be the lexicographically minimal cross-repeat. % pair such that $u|_{S_1} = v|_{S_2}$. 
Let $T = \eta_n(q+F_n)$. %\{t \in D_0 : \exists p \in S_2, \, t \sim p\}$. 
By property (P6), we have $|T| = n^d$. Also, the map $(u,v) \mapsto (u,p,q,v|_{E_n \setminus T})$ is clearly injective. Hence
\begin{equation*}
|V_n| \leq |G_n| \cdot |E_n|^2 \cdot |\mathcal{A}|^{|E_n| - n^d}.
\end{equation*}
Dividing by $|G_n|^2$ and using that $|G_n|/|\mathcal{A}|^{|E_n|} \to 1$ (from Lemma \ref{Lemma:Halloween}), we see that there exists a constant $C_0 >1$ such that for all large enough $n$,
\begin{equation*}
\frac{|V_n|}{|G_n|^2} \leq \frac{C_0 |E_n|^2}{ |\mathcal{A}|^{n^d}}.
\end{equation*}
\end{proof}

We are finally prepared to prove Theorem \ref{Thm:SecondMomentMethod}.

\vspace{2mm}

\begin{ProofOfSecondMomentMethodThm}

Let $d \geq 1$, $\mathcal{A}$, $\alpha \in (|\mathcal{A}|^{-1},1]$, and $c \in (0,1)$ be as in the statement of the theorem. Additionally, let $\{\Lambda_n\}_n$ be an $\{m_n\}$-regular sequence of lattices with notation as above.
By Lemma \ref{Lemma:Halloween}, we have
\begin{equation*}
|G_n| \geq |\mathcal{A}|^{|E_n|} \Bigl( 1 - |E_n|^2 |\mathcal{A}|^{-n^d} \Bigr).
\end{equation*}
Let $a \in (c,1)$. 
Since $|E_n| = |P_n \cap \mathbb{Z}^d| = \vol(\Lambda_n)$, property (P1) (subexponential growth) implies that for all large enough $n$,
\begin{equation*}
a \Bigl( 1 - |E_n|^2 |\mathcal{A}|^{-n^d} \Bigr) \geq c.
\end{equation*}
Combining these inequalities with Equation (\ref{Eqn:Expectation}), we see that for all large enough $n$, 
\begin{align*}
a \cdot \EE \bigl[ \varphi_n \bigr] & = a \cdot \alpha^{|E_n|} \cdot |G_n| \\
 & \geq a \cdot \alpha^{|E_n|} \cdot |\mathcal{A}|^{|E_n|} \Bigl( 1 - |E_n|^2 |\mathcal{A}|^{-n^d} \Bigr)  \geq c (\alpha |\mathcal{A}| )^{|E_n|}.
\end{align*}
%Let $E_n = \EE[ \varphi_n ]$.
Using the above inequality, followed by Chebyshev's inequality and Equations (\ref{Eqn:Expectation}) and (\ref{Eqn:Variance}), we see that for all large enough $n$,
\begin{align*}
\PP \bigl( \varphi_n \leq c (\alpha |\mathcal{A}| )^{|E_n|} \bigr) & \leq 
\PP \bigl( \varphi_n \leq a \EE[\varphi_n] \bigr) \\
&  = \PP \bigl( \EE[\varphi_n] - \varphi_n \geq \EE[\varphi_n] - a \EE[\varphi_n] \bigr) \\
 & \leq \frac{\Var[ \varphi_n ]}{ \EE[\varphi_n]^2(1-a)^2 } \\
 & = \frac{\Var[ \varphi_n ]}{ |G_n|^2 \cdot \alpha^{2|E_n|} \cdot  (1-a)^2} \\
 & = \frac{1}{(1-a)^2} \sum_{r = 1}^{|E_n|} \frac{|V_{n,r}|}{|G_n|^2} \, (\alpha^{-r} -1) \\
 & \leq \frac{1}{(1-a)^2} \sum_{r = 1}^{|E_n|} \frac{|V_{n,r}|}{|G_n|^2} \, \alpha^{-r}.
\end{align*}
Let $\lambda>1$ be such that $|\mathcal{A}|^{-1} < \lambda^{-1} < \alpha$.
By Lemmas \ref{Lemma:NR} and \ref{Lemma:Candy}, we have that for all large enough $n$, 
\begin{align*}
\sum_{r = 1}^{|E_n|} \frac{|V_{n,r}|}{|G_n|^2} \alpha^{-r} & = \sum_{r = 1}^{n^d} \frac{|V_{n,r}|}{|G_n|^2} \alpha^{-r} + \sum_{r = n^d+1}^{|E_n|} \frac{|V_{n,r}|}{|G_n|^2} \alpha^{-r}  \\
 & \leq \frac{|V_n|}{|G_n|^2} \alpha^{-n^d} + C_1 \sum_{r = n^d +1}^{|E_n|} (\alpha \lambda)^{-r} \\ 
 & \leq \frac{C_0 |E_n|^2}{ |\mathcal{A}|^{n^d}} \alpha^{-n^d} + C_2 (\alpha \lambda)^{-n^d}.
\end{align*}
Combining the two previous displays and again using property (P1) (subexponential growth), we obtain that there exists $\rho > 0$ (indeed we could choose any $\rho < \log(\alpha \lambda)$) such that for all large enough $n$,
\begin{equation*}
\PP \bigl( \varphi_n \leq c (\alpha |\mathcal{A}| )^{|E_n|} \bigr)  \leq e^{- \rho n^d},
\end{equation*}
as desired.
\end{ProofOfSecondMomentMethodThm}

%\subsection{Lemmas} \label{Sect:Lemmas}

\section{Factors} \label{Sect:Factors}

In this section, we use the many patterns with prescribed repeat structure guaranteed by Theorem \ref{Thm:SecondMomentMethod} to construct the desired surjective factor maps. % onto a $\mathbb{Z}^d$ subshift with a fixed point and the $k$-extension property from random $\mathbb{Z}^d$ SFTs with larger (expected) entropy. 
The main construction of the factor map is done in the same way as in \cite{McGoffPavlov_factors}. 
Instead of repeating all arguments, we only describe how they should be adapted to prove Theorem \ref{Thm:Factors}. 
We note that the arguments given in \cite{McGoffPavlov_factors} were technically given only for $d > 1$, since that was the only novel case in that work. Nonetheless, they also apply to the case $d = 1$, and so we use them in that case as well.

\vspace{2mm}

\begin{ProofOfFactorThm}

Assume that $Y$, $\alpha = \alpha_X$, and $\mathcal{A} = \mathcal{A}_X$ satisfy the hypotheses of the theorem, and choose $g$ so that $Y$ has the $g$-extension property for a set of finite patterns with diameters bounded from above by $g$. The construction of our factor map is very similar to the one constructed in 
\cite{McGoffPavlov_factors}. However, in that work, we assumed block gluing on the domain $X$, which we have no reason to suspect holds for random $\mathbb{Z}^d$ SFTs with high probability. The proof from that work is too complicated to describe in full here; instead we will only informally define the factor map $\phi$, describing how we obviate the need for the block gluing assumption. The proof from \cite{McGoffPavlov_factors} begins with the construction of a family of ``marker'' words in $\mathcal{L}(X)$, with restrictions on possible overlaps between marker words. Knowledge of marker words appearing in a point of $X$, along with their locations, completely determines $\phi(x)$. In \cite{McGoffPavlov_factors}, these markers are constructed using block gluing. However, in the end, the reader can check that all that was required was the existence of a set $S$ with the following properties for some parameters $k,m$. (The first parameter $k$ was the side length of a so-called ``surrounding frame'' in \cite{McGoffPavlov_factors}, and in the notation of that work, was written as 
$k + 2m + 2g$. The use of $m$ to denote the second parameter is not accidental; it will be the same $m$ for which we apply the second moment method.)
%with $m > (4m+4)g$. (THIS BOUND IS NOT EXPLICITLY STATED IN OTHER PAPER; IT COMES FROM THE BOUND ON $p$.) \\
%\kmmargin{Does $S$ need to include all of $\mathcal{L}_{F_k}(X)$? If not, then we need to }

\begin{itemize}
\item[(i)] $S$ is a set of patterns on $F_k$ which can be partitioned as $$\displaystyle S = \bigsqcup_{i=1}^M S_i,$$ where 
$M > |\mathcal{L}_{F_{k-m-g}}(Y)| \cdot |\mathcal{A}_Y|^{d((k+m+(2d+3)g)^d-(k+m-(2d-1)g)^d)}$. 

%2d^2 (2d+1) g k^{d-1}

\item[(ii)] For any choice of $y \in [1,M]^{\mathbb{Z}^d}$, there exists $x \in X$ such that $x((k-m)v+F_k) \in S_{y(v)}$ for all $v \in \mathbb{Z}^d$.

\item[(iii)] Given $v \neq v' \in \mathbb{Z}^d$ so that $x(v + F_k), x(v' + F_k) \in S$, either $v' - v = (k - m) u$ for some $u \in \Z^d$ with $\|u\|_{\infty} \leq 1$, or $\|v' - v\|_{\infty} > k - m + (2d+3)g$.
\end{itemize}

For ease of notation, we define $p(k) = d((k+m+(2d+3)g)^d-(k+m-(2d-1)g)^d)$, and note that $p$ has degree $d-1$ in $k$.

Given a set $S$ and parameters $k$ and $m$ satisfying these properties, the factor map $\phi$ behaves as follows. 
For any $x \in X$, $\phi$ finds all $v \in \mathbb{Z}^d$ for which $x(v + F_k) \in S$, and then, for each such $v$, the 
set $S_i$ in which $x(v + F_k)$ lies determines choices of letters on $v + F_{k-m-g}$. (We follow \cite{McGoffPavlov_factors} and call $v + F_{k-m-g}$ the \textbf{determined zone} associated to $x(v + F_k)$). Then letters are placed and erased in a series of steps, all completely determined by the location of words in $S$ and the $S_i$ each lies in, until finally all of $\mathbb{Z}^d$ is filled, yielding $\phi(x)$. %``leftover'' information about which $S_i$ the nearest marker words came from. 
This portion of the proof uses only properties of $Y$ and will be completely unchanged from the proof in \cite{McGoffPavlov_factors} once $S$ has been determined.

Then, (i) and (ii) together are used to prove surjectivity. Indeed, (i) yields a surjection from $[1,M]$ to $\mathcal{L}_{F_{k-m-g}}(Y) \cdot \mathcal{A}_Y^{p(k)}$. Then, given any $y \in Y$, a point $x \in X$ as in (ii) can be defined with patterns from properly chosen 
$S_i$ that cause the proper letters to be placed and erased in subsequent steps, yielding $\phi(x) = y$. %Again, this portion is identical to the corresponding proof from (REFERENCE) and so we do not repeat it here. (CUT THIS SENTENCE?)

Therefore, omitting the elements of the proof that may be repeated exactly as in \cite{McGoffPavlov_factors}, we need only prove that for properly chosen parameters $k$ and $m$ (depending on $n$), there exists $\rho>0$ such that for all large enough $n$,
\begin{equation*}
\mathbb{P}_{n,\alpha}\bigl(\exists S \textrm{ that avoids $\mathcal{F}$  and satisfies (i), (ii), and (iii)} \bigr)  \geq 1 - e^{-\rho n^d}.
\end{equation*}
Let us now prove this statement.

Choose any $k = k(n)$ satisfying $\frac{k}{n} \rightarrow \infty$ and $\frac{\log k}{n} \rightarrow 0$, and take $m = m(n) = n + (2d+3)g$. Following the notation of Section \ref{Sect:SecondMomentMethod}, for every $n$ define the lattice $\Lambda_n = (k-m)\mathbb{Z}^d$, with associated basis $\{(k-m)e_i\}_{i=1}^d$; the sequence $\Lambda_n$ is then $\{m_n\}$-regular. In this case, the associated objects from Section \ref{Sect:SecondMomentMethod} are as follows. We have $D_n = [-m+1, k]^d \subset \mathbb{Z}^d$, and $E_n = [1, k-m]^d \subset \mathbb{Z}^d$. Also, the set $G_n$ consists of all patterns $u \in \mathcal{A}^{D_n} = \mathcal{A}^{[-m+1,k]^d}$ where for any $A,B \in \mathcal{C}_n(D_n)$, $u|_A = u|_{B}$ iff $A = p + B$ for some $p \in \Lambda_n$; in particular, $|W_n(u)| = |E_n| = (k-m)^d$.
%(Alternately, a pair of $n \times n$ patterns in $w$ is equal iff they are separated by $(k - m) u$ for some $u$ with $\|u\|_{\infty} \leq 1$.) 
Recall that for a set of forbidden patterns $\mathcal{F} \subset \mathcal{A}^{F_n}$, we let $G_n(\mathcal{F})$ denote the patterns in $G_n$ that avoid the forbidden patterns in $\mathcal{F}$.

For the moment, fix a set of forbidden words $\mathcal{F} \subset \mathcal{A}^{F_n}$. 
We truncate all elements of $G_n(\mathcal{F})$ to $F_k$, resulting in a collection $G_n'(\mathcal{F})$, and clearly $|G_n'(\mathcal{F})| \geq |\mathcal{A}|^{k^d - (k+m)^d} |G_n(\mathcal{F})|$. Note that for $w \in G_n'(\mathcal{F})$, it is still the case that for $A$ and $B$ in $\mathcal{C}_n(D_n)$, we have $w|_A = w|_B$ if and only if $A = p+B$ for some $p \in \Lambda_n$. Moreover, it is possible to choose a subcollection $G_n''(\mathcal{F}) \subset G_n'(\mathcal{F})$ such that all patterns in $G_n''(\mathcal{F})$ share the same subpattern on 
$\partial^{in}_m(F_k)$ and 
\begin{align} 
\begin{split} \label{Eqn:Ducks}
|G_n''(\mathcal{F})| & \geq |\mathcal{A}|^{-|\partial^{in}_m(F_k)|} |G_n'(\mathcal{F})| \\
& = |\mathcal{A}|^{(k-2m)^d - k^d} |G_n'(\mathcal{F})| \\
& \geq |\mathcal{A}|^{(k-2m)^d - (k+m)^d} |G_n(\mathcal{F})| \\
& \geq |\mathcal{A}|^{-3dkm} |G_n(\mathcal{F})|.
 \end{split}
\end{align}

Now we consider the set of forbidden words $\mathcal{F}$ to be chosen at random with distribution given by $\mathbb{P}_{n,\alpha}$.
By Theorem \ref{Thm:SecondMomentMethod}, since $\log(\alpha |\mathcal{A}|) > h(Y)$, there exists $\epsilon > 0$ and $\rho>0$ such that for all large enough $n$,  
\begin{equation}\label{eqn1}
\mathbb{P}_{n,\alpha} \bigl(|G_n(\mathcal{F})| > e^{k^d (h(Y) + \epsilon)}\bigr) > 1 - e^{-\rho n^d}.
\end{equation}
Then by (\ref{Eqn:Ducks}) and (\ref{eqn1}) and inclusion, we have that for all large enough $n$,
\begin{equation*}
\mathbb{P}_{n,\alpha} \bigl(|G_n''(\mathcal{F}) | > e^{k^d (h(Y) + \epsilon) - 3dkm \log |\mathcal{A}|} \bigr) > 1 - e^{- \rho n^d}.
\end{equation*}

Since $p$ has degree $d-1$, it is the case that $e^{k^d (h(Y) + \epsilon) - 3dkm \log |\mathcal{A}|} > |\mathcal{L}_{[1,k-m-g]^d}(Y)| \cdot |\mathcal{A}_Y|^{p(k)}$ for large $n$, and so for such $n$,
\begin{equation}\label{eqn2}
\mathbb{P}_{n,\alpha} \bigl(|G_n''(\mathcal{F}) | > |\mathcal{L}_{[1,k-m-g]^d}(Y)| \cdot |\mathcal{A}_Y|^{p(k)} \bigr) > 1 - e^{- \rho n^d}.
\end{equation}

It remains only to show that on the event $|G_n''(\mathcal{F})| > |\mathcal{L}_{F_{k-m-g}}(Y)| \cdot |\mathcal{A}_Y|^{p(k)}$, the set $S = G_n''(\mathcal{F})$ is a collection satisfying (i), (ii), and (iii). By taking each $S_i$ to be a singleton, (i) is obviously satisfied. Since all patterns in $G_n''(\mathcal{F})$ agree on $\partial^{in}_m(F_k)$, $m \geq n$, and $X$ is an SFT defined by forbidden words with shape $F_n$, (ii) is satisfied.

To prove (iii), we may clearly assume without loss of generality (by translating if necessary) that $v' = 0$. We then assume for a contradiction that $x(F_k)$ and $x(v + F_k)$ are in $G_n''(\mathcal{F})$ (after translation), $v \neq 0$, $v$ is not of the form $(k - m)u$ for some $u$ with $\|u\|_{\infty} \leq 1$ (i.e. $v \notin \Lambda_n$), and $\|v\|_{\infty} \leq k - m + (2d+3)g = k - n$. We assume for now that all coordinates of $v$ are nonnegative; since our arguments are not affected by reflecting over any plane $x_i = 0$, other cases are similar. We then know that $0 \leq v_i \leq k - n$ for $1 \leq i \leq d$.

Recall that by definition of $G_n''(\mathcal{F})$, we have $x(\partial^{in}_m(F_k)) = x(v + \partial^{in}_m(F_k))$. 
Since $m \geq n$, $F_n \subseteq \partial^{in}_m(F_k)$, and so $x(F_n) = x(v + F_n)$. On the other hand, since $0 \leq v_i \leq k - n$ for $1 \leq i \leq d$, $v + F_n \subseteq F_k$. This means that $x(F_n)$ and $x(v + F_n)$ are equal subwords of $x(F_k) \in G_n''(\mathcal{F})$ whose shapes differ by a vector not in $\Lambda_n$, a contradiction to the definition of $G_n''(\mathcal{F})$. Therefore, (iii) is proved, and so the collection $S = G_n''(\mathcal{F})$ satisfies (i), (ii), and (iii).

\end{ProofOfFactorThm}

\section{Embeddings} \label{Sect:Embeddings}

Here we use the many patterns with prescribed repeat structure that are guaranteed by Theorem \ref{Thm:SecondMomentMethod} to construct the desired embeddings.
% from subshifts with the periodic marker condition to random $\Z^d$ SFTs.

\vspace{2mm}

\begin{ProofOfEmbeddingThm}

Assume that $X$, $\alpha = \alpha_Y$, and $\mathcal{A} = \mathcal{A}_Y$ are as in the theorem and that $\{\mathcal{O}_n\}_n$ and $\{\Lambda_n\}_n$ are the orbits and associated lattices guaranteed by the fact that $X$ satisfies the periodic marker condition for $m_n = n$. Our embedding map will be a much simpler version of the ones from \cite{Lightwood2003,Lightwood2004}. In our setting, the assumption that $X$ factors onto finite orbits replaces the much more difficult marker construction in that previous work.

Again we follow the notation from Section~\ref{Sect:SecondMomentMethod}, i.e. $P_n$ is the fundamental domain of $\Lambda_n$, $D_n$ is the set of $q \in \mathbb{Z}^d$ with $d(q,P_n) \leq n$, $\eta_n: D_n \rightarrow P_n$ sends any $p$ to the unique element $q \in P_n$ for which $p-q \in \Lambda_n$, and $E_n = \eta_n(D_n) = P_n \cap \mathbb{Z}^d$. By the small outer boundaries property (P2), %$\frac{\log |D_n^0|}{n} \rightarrow 0$ and 
we have
$\frac{|E_n|}{|D_n|} \rightarrow 1$. 

As usual, let $G^0_n$ be the set of patterns $w \in \mathcal{A}^{D_n}$ where $w(A) = w(B)$ whenever $A, B \in C_n(D_n)$ and $A = p + B$ for some $p \in S$, and define $G_n \subseteq G^0_n$ to be the set of all patterns $u \in G^0_n$ with $|W_n(u)| = |E_n|$, i.e. where the only equal pairs of subpatterns in $u$ with shape $F_n$ are those guaranteed by the definition of $G^0_n$. Then, by Theorem \ref{Thm:SecondMomentMethod}, since $\log(\alpha |\mathcal{A}|) > h(X)$, there exist $\epsilon > 0$ and $\rho>0$ so that for all large enough $n$, 
\begin{equation}\label{eqn2.5}
\mathbb{P}_{n,\alpha} \bigl(|G_n(\mathcal{F})| > e^{(h(X) + \epsilon)|E_n|} \bigr) > 1 - e^{- \rho n^d}.
\end{equation}
Define $D'_n$ to be the set of $q \in D_n$ with $d(q, \mathbb{Z}^d \setminus D_n) \leq 2n$. We claim that $D'_n \subseteq (D_n \setminus E_n) + \{-n,0,n\}^d$. To see this, take any $q \in D'_n$. There are two cases. If $d(q, \mathbb{Z}^d \setminus D_n) \leq n$, then $q \in D_n \setminus E_n$ by definition. If instead $d(q, \mathbb{Z}^d \setminus D_n) > n$, then $q + [-n,n]^d \subseteq D_n$, but there exists $r \in \mathbb{Z}^d \setminus D_n$ with $d(q,r) \leq 2n$. Then, define $s \in 
\{-n,n\}^d$ to have each coordinate with the same sign as the corresponding coordinate of $r - q$. Then $q + s \in D_n$ since $s \in [-n,n]^d$, and it is easily checked that $d(q+s, r) \leq n$. Then $q + s \in D_n \setminus E_n$, and so $q \in (D_n \setminus E_n) + \{-n,0,n\}^d$, completing the proof.

This implies that $|D'_n| \leq 3^d |D_n \setminus E_n|$, and so since $\frac{|E_n|}{|D_n|} \rightarrow 1$, we have $\frac{|D'_n|}{|E_n|} \rightarrow 0$. 

For any $\mathcal{F} \subset \mathcal{A}^{F_n}$, it is possible to choose a subcollection $G_n'(\mathcal{F}) \subset G_n(\mathcal{F})$ where all patterns in $G_n'(\mathcal{F})$ share the same subpattern on $D'_n$ and $|G_n'(\mathcal{F})| > |\mathcal{A}|^{-|D'_n|} |G_n(\mathcal{F})|$. 

Since the sets $E_n$ are intersections of convex subsets with $\mathbb{Z}^d$ and have inradii approaching infinity by property (P3) (small inner boundaries), Theorem A from \cite{BBQ} implies that $\lim \frac{\log |\mathcal{L}_{E_n}(X)|}{|E_n|} = \log h(X)$. Therefore, for large enough $n$, we have $|\mathcal{A}|^{-|D'_n|} e^{(h(X) + \epsilon) |E_n|} > |\mathcal{L}_{E_n}(X)|$, and so (\ref{eqn2.5}) implies that for such $n$, 
\begin{equation}\label{eqn3}
\mathbb{P}_{n,\alpha} \bigl(|G_n'(\mathcal{F})| > |\mathcal{L}_{E_n}(X)| \bigr) > 1 - e^{-\rho n^d}.
\end{equation}

It now suffices to show that for any $\mathbb{Z}^d$ SFT $Y$ defined by forbidden patterns on $F_n$ and for which one can choose $G' = G_n'(\mathcal{F})$ as above with $|G'| > |\mathcal{L}_{E_n}(X)|$, there exists an embedding $\psi$ from $X$ to $Y$. Assume that $Y$ is such an SFT, and define an injection $\gamma$ from $\mathcal{L}_{E_n}(X)$ to $G'$. Then, denote by $\beta$ the assumed factor map from $X$ onto the periodic orbit $\mathcal{O}_n$, and fix an element $z \in \mathcal{O}_n$. We may think of $\beta$ as assigning, in a shift-commuting and continuous way, an element of $E_n$ to each $x \in X$, namely the unique $v \in E_n$ for which $\beta(x) = \sigma_v z$. Given $x$, we then define $\psi(x)$ as follows: for every $v \in \Lambda_n$, $(\psi(x))(v + \beta(x) + D_n)$ is assigned to be $\gamma(x(v + \beta(x) + E_n))$. 

We first must check that $\psi$ is well-defined, since the sets $\{v + \beta(x) + D_n\}_{v \in \Lambda_n}$ are not disjoint. 
Suppose that some $t \in \mathbb{Z}^d$ is contained in both $v + \beta(x) + D_n$ and $v' + \beta(x) + D_n$ for $v \neq v' \in \Lambda_n$, and let $w,w' \in G_n'$ be the patterns assigned to $\psi(x)$ on the shapes $v + \beta(x) + D_n$ and $v' + \beta(x) + D_n$, respectively. We must show that $w$ and $w'$ assign the same letter at $t$, i.e. that $w(t - v - \beta(x)) = w'(t - v' - \beta(x))$. Note that since $w,w' \in G'$, we have $w(D'_n) = w'(D'_n)$. 

We first observe that since $t \in v' + \beta(x) + D_n$, the definition of $D_n$ gives that $d(t, v' + \beta(x) + E_n) \leq n$. Since $v' + \beta(x) + E_n$ and $v + \beta(x) + E_n$ are disjoint, this implies that $d(t, \mathbb{Z}^d \setminus (v + \beta(x) + D_n)) \leq 2n$. Thus $t \in v + \beta(x) + D'_n$, and then $t - v - \beta(x) \in D'_n$. Therefore, $w(t - v - \beta(x)) = w'(t - v - \beta(x))$. Since $t - v' - \beta(x)$ is also in $D'_n$, and since $v - v' \in \Lambda_n$, by definition of $G_n$ it must be the case that $w'(t - v - \beta(x)) = w'(t - v' - \beta(x))$, and so 
$w(t - v - \beta(x)) = w'(t - v' - \beta(x))$, as desired.

So, $\psi$ is well-defined, and it's shift-commuting and continuous since $\beta$ is. It remains only to prove that $\psi(x) \in Y$ and to check injectivity. To prove that $\psi(x) \in Y$, we first claim that every translate of $F_n$ is contained entirely within $v + \beta(x) + D_n$ for some $v \in \Lambda_n$. To this end, consider any set $p + F_n$. Since the sets $\{v + \beta(x) + E_n\}_{v \in \Lambda_n}$ partition $\mathbb{Z}^d$, there exists $v$ so that $p \in v + \beta(x) + E_n$. But then by definition of $D_n$, we have $p + F_n \subseteq v + \beta(x) + D_n$. Now, for each $p + F_n$, we know that $(\psi(x))(p + F_n)$ is a subpattern of some $(\psi(x))(v + \beta(x) + D_n)$, and thus in $\mathcal{L}(Y)$. Since $Y$ is an SFT determined by forbidden configurations with shape $F_n$, we then know that $\psi(x) \in Y$. 

Finally, we will verify that $\psi$ is injective, and to that end, we suppose that $x_1 \neq x_2$ are points in $X$. There are two cases. For the first case, suppose that $\beta(x_1) = \beta(x_2)$ and denote their common value by $\beta$. Since the sets $\{v + E_n\}_{v \in \Lambda_n}$ partition $\mathbb{Z}^d$, there exists $v \in \mathbb{Z}^d$ so that $(x_1)(v + \beta + E_n) \neq (x_2)(v + \beta + E_n)$, and so by injectivity of $\gamma$, we see that $(\psi(x_1))(v + \beta + D_n) \neq (\psi(x_2))(v + \beta + D_n)$, meaning that $\psi(x_1) \neq \psi(x_2)$. 

For the second case, suppose that $\beta(x_1) \neq \beta(x_2)$. Then by definition of $\psi$, for all $v \in \Lambda_n$ (including $v = 0$), the patterns $(\psi(x_1))(v + \beta(x_1) + D_n)$ and $(\psi(x_2))(v + \beta(x_2) + D_n)$ are both in $G'$ (after shifting to have shape $D_n$). Define $t = \eta_n(\beta(x_2) - \beta(x_1))$; then $t \in E_n \setminus \{0\}$. Then, $(\psi(x_1))(\beta(x_1) + D_n)$ and $(\psi(x_2))(t + \beta(x_1) + D_n)$ are in $G'$, and so $(\psi(x_1))(\beta(x_1) + D'_n))$ and $(\psi(x_2))(t + \beta(x_1) + D'_n)$ are the same word. Since $t \in E_n \setminus \{0\}$, the sets $E_n$ and $E_n + t$ intersect nontrivially, and so there exists $u \in \partial^{in}_1(\beta(x_1) + E_n) \cap \partial^{in}_1(t + \beta(x_1) + E_n)$. Consider the set $u + F_n$. All sites in $u + F_n$ are within distance $n$ of $u$, which itself is within distance $n$ of both $\mathbb{Z}^d \setminus(\beta(x_1) + D_n)$ and $\mathbb{Z}^d \setminus(t + \beta(x_1) + D_n)$, and so $u + F_n \subset (\beta(x_1) + D'_n) \cap (t + \beta(x_1) + D'_n)$. This clearly implies that $t + u + F_n \subset t + \beta(x_1) + D'_n$, and so $(\psi(x_1))(u + F_n) = (\psi(x_2))(t + u + F_n)$. We also note that $u + F_n, t + u + F_n \subseteq t + \beta(x_1) + D'_n$, and it is not the case that $\eta_n(u) = \eta_n(t + u)$ since $t \in E_n \setminus \{0\}$. Therefore, by definition of $G_n$, 
$(\psi(x_2))(u + F_n) \neq (\psi(x_2))(t + u + F_n)$. However, this implies that $(\psi(x_1))(u + F_n) \neq (\psi(x_2))(u + F_n)$, and so $\psi(x_1) \neq \psi(x_2)$ as desired. 

In both cases, we have shown that $\psi(x_1) \neq \psi(x_2)$, and so $\psi$ is injective. We have verified that $\psi$ is an embedding which exists for any random $\mathbb{Z}^d$ SFT $Y$ with $|G_n'(\mathcal{F})| > |\mathcal{L}_{E_n}(X)|$, and so by (\ref{eqn3}), we are done.

\end{ProofOfEmbeddingThm}

\section{Discussion}

We here discuss a few questions and directions for further work in this area.

Theorem \ref{Thm:Embeddings} establishes that if a $\mathbb{Z}^d$ subshift $X$ satisfies the periodic marker condition and a certain entropy inequality, then $X$ may be embedded into a random $\mathbb{Z}^d$ SFT $Y$ with probability tending to one. It is natural to compare this to the best known embedding results for $\mathbb{Z}^d$ subshifts, which were obtained by Lightwood (\cite{Lightwood2003,Lightwood2004}). Those results impose a deterministic uniform mixing assumption on the codomain, but they only require that the domain 
$X$ has no periodic points. To prove these results, Lightwood shows that if $X$ has no periodic points, then it is possible to find a certain marker structure for points of $X$. More formally, each point of $X$ can be associated to a tiling of $\mathbb{R}^d$ with a finite number of polytope prototiles. The geometry of these tiles is in general much more complicated than the parallelotopes given by our periodic marker condition, and we were unable to adapt the second moment argument from Section~\ref{Sect:SecondMomentMethod} to this more general setting. %We do not then know the answer to the following question.

\begin{question}
Is it possible to use Lightwood's more general marker construction to produce an embedding from an aperiodic $\mathbb{Z}^d$ SFT $X$ into a random $\mathbb{Z}^d$ SFT with limiting probability one? 
\end{question}

We also note that Theorems~\ref{Thm:Factors} and \ref{Thm:Embeddings} are similar to two questions that we did not address, namely those of embedding a random $\mathbb{Z}^d$ SFT into a fixed $\mathbb{Z}^d$ subshift and of factoring a fixed $\mathbb{Z}^d$ subshift onto a random $\mathbb{Z}^d$ SFT. Both have some associated obstacles which we do not yet know how to overcome. 

\begin{question}\label{badq1}
Are there hypotheses on $|\mathcal{A}_X|$ and $\alpha_X$ and a fixed $\mathbb{Z}^d$ subshift $Y$ that guarantee that the random $\mathbb{Z}^d$ SFT $X$ embeds into $Y$ with limiting probability $1$?
\end{question}

We can immediately obtain some necessary conditions on such a $Y$. For every $k$, define the set $P_k$ of points in $\mathcal{A}_X^{\mathbb{Z}^d}$ that have period set containing $ke_i$ for $1 \leq i \leq d$. By Lemma~\ref{perlemma}, for every $k$, the limiting probability of $X$ containing $P_k$ is at least $\alpha^{k^d |P_k|} > 0$. Therefore, for a $Y$ as in Question~\ref{badq1}, every $P_k$ must embed into $Y$, meaning that for every $k$, $Y$ has at least $|\mathcal{A}_X|^{k^d}$ points with period set containing $ke_i$ for $1 \leq i \leq d$. Since such points are in one-to-one correspondence with their restriction to $F_k$, this means that $|\mathcal{L}_{F_k}(Y)| \geq |\mathcal{A}_X|^{k^d}$ for every $k$, implying that $h(Y) \geq \log |\mathcal{A}_X|$, regardless of the value of $\alpha_X$.

\begin{question}\label{badq2}
Are there hypotheses on $|\mathcal{A}_Y|$ and $\alpha_Y$ and a fixed $\mathbb{Z}^d$ subshift $X$ that guarantee that $X$ factors onto the random $\mathbb{Z}^d$ SFT $Y$ with limiting probability $1$?
\end{question}

An $X$ as in Question~\ref{badq2} could not have any points with finite orbit at all: if $X$ contained a point with finite orbit of size $k$, then $Y$ would have to contain a point with finite orbit with size less than or equal to $k$ with limiting probability $1$, contradicting Lemma~\ref{perlemma}. Also, in all previous work on (surjective) factor maps for $\mathbb{Z}^d$ subshifts, the domain has a uniform mixing property; we were able to substitute ``typicality'' in Theorem~\ref{Thm:Factors}, but clearly cannot in the case of Question \ref{badq2}. Unfortunately, there are no known examples of $\mathbb{Z}^d$ subshifts $X$ with uniform mixing properties and no points with finite orbit.

Finally, we note that in this work, we did not fully address the topic of embeddings/factor maps between two random $\mathbb{Z}^d$ SFTs $X,Y$ due to the unavoidable positive limiting probability that the necessary periodic point condition between $X,Y$ would fail. We do not know the answer to the following question.

\begin{question}
%If $\alpha_X |\mathcal{A}_X| > \alpha_Y |\mathcal{A}_Y|$, then is there a positive limiting probability that $X$ factors onto $Y$? 
%If the inequality is reversed, is there a positive limiting probability that $X$ embeds into $Y$? 
Does an embedding/factor map exist from $X$ to $Y$ with limiting conditional probability $1$ on the the event that $X,Y$ satisfy the appropriate necessary periodic point condition?
\end{question}

\bibliographystyle{plain}
\bibliography{RandomSFTs_maps_refs}

\end{document}